\newcommand{\C}{\mathbb{C}}
\newcommand{\Cn}{\mathbb{C}^n}
\newcommand{\D}{\mathbb{D}}
\newcommand{\T}{\mathbb{T}}
\newcommand{\G}{\widetilde{\mathbb{G}}_3}
\newcommand{\F}{\widetilde{\Gamma}_3}
\newcommand{\gn}{\mathbb{G}_n}
\newcommand{\Gn}{\widetilde{\mathbb{G}}_n}
\newcommand{\gamn}{\Gamma_n}
\newcommand{\Gamn}{\widetilde{\Gamma}_n}
\newcommand{\p}{\Phi_1}
\newcommand{\vp}{\psi}
\newcommand{\M}{\mathcal{M}}
\newcommand{\al}{\alpha}
\newcommand{\be}{\beta}
\newcommand{\lm}{\lambda}
\newcommand{\om}{\omega}
\newcommand{\q}{\quad}
\newcommand{\qq}{\qquad}
\newcommand{\n}{\lVert}
\newcommand{\imp}{\Rightarrow}
\newcommand{\Lra}{\Leftrightarrow}
\newcommand{\lf}{\left(}
\newcommand{\ls}{\left\{}
\newcommand{\lt}{\left[}
\newcommand{\rf}{\right)}
\newcommand{\rs}{\right\}}
\newcommand{\rt}{\right]}
\newcommand{\df}{\dfrac}
\newcommand{\la}{\langle}
\newcommand{\ra}{\rangle}
\newcommand{\nj}{{n \choose j}}
        \def \qed {\hfill \vrule height6pt width 6pt depth 0pt}
        \def\textmatrix#1&#2\\#3&#4\\{\bigl({#1 \atop #3}\ {#2 \atop #4}\bigr)}
        \def\dispmatrix#1&#2\\#3&#4\\{\left({#1 \atop #3}\ {#2 \atop #4}\right)}
\newtheorem{thm}{Theorem}[section]
\newtheorem{cor}[thm]{Corollary}
\newtheorem{lemma}[thm]{Lemma}
\numberwithin{equation}{section} \theoremstyle{definition}
\newtheorem{defn}[thm]{Definition}
\newtheorem{rem}[thm]{Remark}
\begin{document}

\title[A Schwarz lemma for ${\mathbb G_n}$]
{A Schwarz lemma for the symmetrized polydisc via estimates on another family of domains}

\author[Sourav Pal]{Sourav Pal}
\address[Sourav Pal]{Mathematics Department, Indian Institute of Technology Bombay, Powai, Mumbai - 400076, India.} 
\email{souravmaths@gmail.com , sourav@math.iitb.ac.in}

\author[Samriddho Roy]{Samriddho Roy}
\address[Samriddho Roy]{Mathematics Department, Indian Institute of Technology Bombay, Powai, Mumbai - 400076, India.} \email{samriddhoroy@gmail.com}

\keywords{Symmetrized polydisc, extended symmetrized polydisc, Schwarz lemma}

\subjclass[2010]{30C80, 32A22, 32E30}

\thanks{The named author is supported by the Seed Grant of IIT Bombay, the CPDA of Govt. of India, the INSPIRE Faculty Award (Award No. DST/INSPIRE/04/2014/001462) of DST, India and the MATRICS Award of SERB, (Award No. SERB/F/10165/2019-2020) of DST, India. The second named author is supported by a Ph.D fellowship from the University Grand Commission of India.}

\begin{abstract}

We make some sharp estimates to obtain a Schwarz lemma for the \textit{symmetrized polydisc} $\mathbb G_n$, a family of domains naturally associated with the spectral interpolation, defined by
\[
\mathbb G_n :=\left\{ \left(\sum_{1\leq i\leq n} z_i,\sum_{1\leq
i<j\leq n}z_iz_j \dots, \prod_{i=1}^n z_i \right): \,|z_i|<1,
i=1,\dots,n \right \}.
\]
We first make a few estimates for the \textit{the extended symmetrized polydisc} $\widetilde{\mathbb G}_n$, a family of domains introduced in \cite{pal-roy 4} and defined in the following way:
\begin{align*}
\widetilde{\mathbb G}_n := \Bigg\{ (y_1,\dots,y_{n-1}, q)\in
\C^n :\; q \in \mathbb D, \;  y_j = \be_j + \bar \be_{n-j} q, \; \beta_j \in \mathbb C &\text{ and }\\
|\beta_j|+ |\beta_{n-j}| < {n \choose j} &\text{ for } j=1,\dots, n-1 \Bigg\}.
\end{align*}
We then show that these estimates are sharp and provide a Schwarz lemma for $\Gn$. It is easy to verify that $\mathbb G_n=\widetilde{\mathbb G}_n$ for $n=1,2$ and that ${\mathbb G}_n \subsetneq \widetilde{\mathbb G}_n$ for $n\geq 3$. As a consequence of the estimates for $\widetilde{\mathbb G_n}$, we have analogous estimates for $\mathbb G_n$. Since for a point $(s_1,\dots, s_{n-1},p)\in \mathbb G_n$, ${n \choose i}$ is the least upper bound for $|s_i|$, which is same for $|y_i|$ for any $(y_1,\dots ,y_{n-1},q) \in \widetilde{\mathbb G_n}$, $1\leq i \leq n-1$, the estimates become sharp for $\mathbb G_n$ too. We show that these conditions are necessary and sufficient for $\widetilde{\mathbb G_n}$ when $n=1,2, 3$. In particular for $n=2$, our results add a few new necessary and sufficient conditions to the existing Schwarz lemma for the symmetrized bidisc.
\end{abstract}

\maketitle


\section{Introduction}

\vspace{0.3cm}

\noindent This article is a sequel of our recent work \cite{pal-roy 4} and a next step to the pioneering works due to Agler-Young
\cite{AY-BLMS, AY-TAMS}, Bercovici \cite{hari, Hari-TAMS}, Bharali
\cite{bharali 07, bharali}, Costara \cite{costara1}, Dinen
\cite{Din}, Kosinski-Pflug-Zwonek \cite{KZ, zwonek1, zwonek5}, Nikolov \cite{NN, NTD, NW}, Nokrane-Ransford
\cite{NR1} and few others (see references there in). Throughout
the paper, $\mathbb{R}$ and $\mathbb{C}$ denote the set of real
numbers and the set of complex numbers respectively. For any
positive integer $n$, $\mathbb{C}^n$ is the cartesian product of
$n$ copies of $\mathbb{C}$. By $\mathbb{D}$, $\overline\D$ and
$\mathbb{T}$ we mean the open unit disc, the closed unit disc and
the unit circle with centre at the origin of $\mathbb{C}$
respectively. Following the standard convention, $\mathcal M_{m
\times n}(\mathbb C)$ (or $\C^{m \times n}$) represents the space
of all $m \times n$ complex matrices and $\mathcal M_n(\mathbb C)$
is used when $m=n$. For a matrix $A \in \mathcal M_n(\mathbb C)$,
$\n A \n$ is the operator norm of $A$. Also for a bounded linear
operator $T$, the spectrum and the spectral radius of $T$ are
denoted
by $\sigma(T)$ and $r(T)$ respectively.\\

The aim of this article is to produce a Schwarz type lemma for the symmetrized polydisc $\gn$, a family of domains given by
\[
\mathbb G_n :=\left\{ \left(\sum_{1\leq i\leq n} z_i,\sum_{1\leq
i<j\leq n}z_iz_j \dots, \prod_{i=1}^n z_i \right): \,|z_i|<1,
i=1,\dots,n \right \}.
\]
Clearly $\mathbb G_1=\D$ which is convex but for $n\geq 2$, $\gn$ is non-convex but polynomially convex (see \cite{ay-jfa, edi-zwo}). This family of domains is naturally associated with the spectral interpolation. Indeed, a matrix $A$ belongs to the spectral unit ball $\mathcal B_n^1 \subset \mathcal M_n(\C)$, i.e., $r(A)<1$  if and only if
$\pi_n(\lambda_1,\dots, \lambda_n) \in \mathbb G_n$ (see
\cite{costara1}). Here $\lambda_1, \dots , \lambda_n$ are
eigenvalues of $A$ and $\pi_n$ is the symmetrization map on
$\mathbb C^n$ defined by
\[
\pi_n(z_1,\dots, z_n) = \left(\sum_{1\leq i\leq n} z_i,\sum_{1\leq
i<j\leq n}z_iz_j,\dots, \prod_{i=1}^n z_i \right).
\]
For given distinct points $\xi_1,\dots, \xi_k$ in $\D$ and matrices $A_1,\dots , A_k \in \mathcal B_n^1$, the spectral interpolation seeks necessary and sufficient conditions under which there exists an analytic function $f:\D \rightarrow \mathcal M_n(\C)$ that interpolates the data, that is, $f(\xi_i)=A_i$ for $i=1,\dots ,k$. Apart from the derogatory matrices, the $n\times n$ spectral Nevanlinna-Pick problem is equivalent to a similar interpolation problem for $\mathbb G_n$ (see \cite{ay-ieot}, Theorem 2.1). Note that a bounded domain like $\mathbb G_n$, which has complex-dimension $n$, is much easier to deal with than an unbounded $n^2$-dimensional object like $\mathcal B_n^1$. The symmetrized polydisc has attracted considerable attention in past two decades for aspects of function theory \cite{ALY12, Bh-Sau, zwonek1, pal-roy 1, pal-roy 2, PZ, zwonek3}, complex geometry \cite{ALY14, AY04, bharali, costara, edi-zwo, NN, zwonek4, zwonek5} and operator theory \cite{ay-jfa, ay-jot, tirtha-sourav, tirtha-sourav1, Bisai-Pal1, sourav, sourav3, pal-shalit} (also see references there in).\\

It is evident that the symmetrized polydisc $\gn$ is the image of polydisc $\D^n$ under the symmetrization map $\pi_n$ which is holomorphic and proper. Agler and Young obtained a Schwarz type lemma in \cite{AY-BLMS} for the symmetrized bidisc $\mathbb G_2$ by providing a set of necessary and sufficient conditions each of which ensures the existence of an analytic map from $\D$ to $\mathbb G_2$ interpolating a two-point data. Similar estimates were made in \cite{NR1} by Nokrane and Ransford via an independent approach.\\

In this article, we make some sharp estimates to obtain a set of conditions each of which is necessary for the existence of an analytic function from $\D$ to $\gn$ interpolating a two-point data; $0, \lambda_0 \in \D$ map to $0, \underline s^0 \in \gn$ respectively. Novel work due to Nikolov, Pflug and Zwonek \cite{zwonek5} establishes the fact that the Lempert function of the symmetrized polydisc $\gn$ for $n\geq 3$ is not a distance, that is, $\gn$ does not satisfy the Lempert property (the coincidence of the Carath\'{e}odory pseudo distance and Lempert function) if $n\geq 3$. So, we do not expect to achieve a set of necessary and sufficient conditions in the Schwarz lemma for $\mathbb G_n$ when $n \geq 3$. But we have a different scenario for the symmetrized bidisc $\mathbb G_2$. In this case such conditions are both necessary and sufficient and the reason is that the Lempart's theorem  holds for $\mathbb G_2$ despite the fact that $\mathbb G_2$ is not convex. In fact, $\mathbb G_2$ is the first example of a non-convex domain which satisfies the Lempert property. For this reason the (necessary) conditions that we obtain for the Schwarz lemma for $\gn$ are also sufficient when $n=2$. Thus, we generalize the results of \cite{AY-BLMS, NR1} and also add to their account a few new necessary and sufficient conditions. \\

In \cite{costara1}, Costara found the following elegant description for the points in the symmetrized polydisc:
\[
\gn = \ls (s_1,\dots, s_{n-1}, p)\in \C^n : p\in \D,\; s_j=\be_j +
\bar\be_{n-j}p \: \text{ and } \: (\be_1, \dots, \be_{n-1}) \in
\mathbb G_{n-1} \rs .
\]
This characterization motivates us to introduce, \cite{pal-roy 4}, a new family of domains, namely the \textit{extended symmetrized polydisc} $\Gn$, which is defined in the following way:
\begin{align*}
 \widetilde{\mathbb G}_n : = \Bigg\{ (y_1,\dots,y_{n-1}, q)\in \C^n
:
|q|<1 , \:  y_j = \be_j + \bar \be_{n-j} q \: \text{ with }\;  |\beta_j| & +  |\beta_{n-j}| < {n \choose j},\\ & 1\leq j\leq n-1 \Bigg\}.
\end{align*}
It follows from Costara's description that for the points $(s_1,\dots, s_{n-1},p)\in \mathbb G_n$, $ {n \choose i}$ is the least upper bound for $|s_i|$. So, if $\lf \be_1,\dots, \be_{n-1} \rf \in \mathbb G_{n-1}$,
then $|\be_j| + |\be_{n-j}| < {n-1 \choose j}+{n-1 \choose n-j}= \nj $.  Therefore, it follows that $\gn \subseteq \Gn$. It is evident that $\widetilde{\mathbb G_1}={\mathbb G_1}=\D$ and we shall prove that $\widetilde{\mathbb G_2}={\mathbb G_2}$ and that $\mathbb G_n \subsetneq \widetilde{\mathbb G_n}$ for $n\geq 3$. Note that the least upper bound for $|y_i|$ for the points $(y_1,\dots, y_{n-1},q)\in \widetilde{\mathbb G_n}$ is also ${n \choose i}$. We make estimates to obtain a set of necessary conditions that provide a Schwarz lemma for $\Gn$. Then we prove that these estimates are sharp. We further show that these necessary conditions are sufficient also when $n=1,2,3$. Since $\gn \subseteq \Gn$ and the maximum modulus of every component of $\Gn$ and $\gn$ have the same least upper bound, these estimates become sharp for $\gn$ too. Consequently we obtain our desired Schwarz lemma for $\gn$. There are $n-1$ fractional linear transformations $\Phi_1,\dots ,\Phi_{n-1}$ which play central role in determining these conditions. We also show that an interpolating function in the Schwarz lemma for $\Gn$, when exists, is not unique. In fact, there are infinitely many such functions.\\

Also $\widetilde{\mathbb G}_3$ is linearly isomorphic to the tetrablock $\mathbb E$ by the map $(x_1,x_2,p) \rightarrow \left( \dfrac{x_1}{3}, \dfrac{x_2}{3}, p \right)$, where the tetrablock
\[
\mathbb E = \{(a,b,p)\in \C^3:\, |p|<1,\, a=c_1+\bar c_2 p\,,\, b=c_2+\bar c_1p \text{ with } |c_1|+|c_2|<1 \}
\]
is a polynomially convex domain which was originated in a special case of the $\mu$-synthesis problem (see \cite{awy}). In \cite{E-K-Z}, it was profoundly discovered by Edigarian, Kosinski and Zwonek that the Lempert theorem holds on the tetrablock. One can expect similar result to hold for $\widetilde {\mathbb G}_3$ (if one applies similar techniques as in \cite{E-K-Z}) as it is linearly isomorphic to $\mathbb E$. Nevertheless, we do not move into that direction here as our goal in this article is different. Since we have success of the Lempert theorem on $\Gn$ for $n=1,2$ and expect affirmative result naturally for $n=3$, this new family of domains $\Gn$ could possibly be a generalization of the symmetrized bidisc (as $\mathbb G_2=\widetilde {\mathbb G_2}$) in the sense of Lempert theorem, whereas the natural generalization $\gn$ of the symmetrized bidisc does not possess the Lempert property for $n>2$.\\

\noindent \textit{Plan of the paper:} We arrange our results in the following way. In Section 2, we recall from the literature a few characterizations for the points in $\Gn$ and $\overline{\Gn}$.  In Section 3, we derive a Schwarz type lemma for $\Gn$. In Section 4, we establish the nonuniqueness of interpolating function in the Schwarz lemma. In Section 5, we obtain a Schwarz lemma for $\gn$. \\

\noindent \textbf{Note.} The present article is the second part of authors' unpublished note \cite{pal-roy 3}. The first part of \cite{pal-roy 3} is published as \cite{pal-roy 4}. Since the tetrablock is linearly isomorphic to $\widetilde{\mathbb G_3}$, our results for $\widetilde{\mathbb G_n}$ are similar to that of \cite{awy} when $n=3$.\\

\noindent \textbf{Acknowledgement.} It is our pleasure to thank Professor W. Zwonek and Professor N. Nikolov profusely for being kind enough to read the first draft of this article and for making several helpful comments.

\section{Geometry of $\Gn$ and $\Gamn$}

\vspace{0.3cm}

\noindent In this section, we recall from the literature a few characterizations of the points in $\Gn$ and its closure $\Gamn$. It was proved in \cite{pal-roy 4} (see \cite{pal-roy 3} also) that the closure of $\Gn$ is the following subset of $\mathbb C^n$:
\begin{align*}
    \Gamn = \Bigg\{ (y_1,\dots,y_{n-1}, q)\in \C^n :\: q \in \D, \:  y_j = \be_j + \bar \be_{n-j} q, \: \beta_j \in \C \: & \text{ and }\\
    |\beta_j|+ |\beta_{n-j}| \leq {n \choose j},  & \q j=1,\dots, n-1 \Bigg\}.
\end{align*}
For studying the complex geometry of $\Gn$ and $\widetilde{\Gamma_n}$ we introduced in \cite{pal-roy 4}, $(n-1)$ fractional linear transformations $\Phi_1, \dots, \Phi_{n-1}$ in the following way.

\begin{defn}
For $z \in \C$, $y=(y_1,\dots,y_{n-1},q) \in \Cn $ and for any $j\in \left\{1,\dots,n-1\right\}$,
    let us define
    \begin{equation} \label{defn-p}
        \Phi_j(z,y) =
        \begin{cases}
            \dfrac{{n \choose j}qz-y_j}{y_{n-j}z-{n \choose j}} & \q \text{ if } y_{n-j}z\neq {n \choose j} \text{ and } y_j y_{n-j}\neq {n \choose j}^2 q \\
            \\
            \dfrac{y_j}{{n \choose j}} & \q \text{ if } y_j y_{n-j} = {n \choose j}^2 q \, ,
        \end{cases}
    \end{equation}
    \\ and
    \begin{equation} \label{defn-D}
        D_j(y) =\underset{z \in \D}{\sup}|\Phi_j (z,y)|= \n \Phi_j(.,y) \n _{H^\infty} \, ,
    \end{equation}\\
    where ${H^\infty}$ denotes the Banach space of bounded complex-valued analytic functions on $\D$ equipped with supremum norm.
\end{defn}

For $j \in \left\{1,\dots,n-1 \right\}$ and for a fixed $y=(y_1,\dots, y_{n-1},q)\in \Cn$, the function $\Phi_j(.,y)$ is a M\"{o}bius transformation. Note that the definition of $\Phi_j(.,y)$ depends only on three components of $y$, namely $y_j,\, y_{n-j}$ and $q$. Throughout the paper we denote by $\tilde y$ the point in $\C^n$ obtained by interchanging the coordinates $y_j$ and $y_{n-j}$ in $y$, that is,
\[
\tilde y = (\tilde y_1,\dots,\tilde y_{n-1},\tilde q), \quad \text{ where } \tilde q = q, \, \tilde y_i=y_i \text{ if } i \neq j,\, n-j \text{ and } \tilde y_i=y_{n-i} \text{ for } i=j.
\]


\noindent It was shown, in \cite{pal-roy 4}, that
\begin{equation} \label{formula-D}
    D_j(y)=\begin{cases}
        \dfrac{{n \choose j} \left|y_j - \bar y_{n-j} q \right| + \left|y_j y_{n-j} - {n \choose j}^2 q \right|}{{n \choose j}^2 - |y_{n-j}|^2 } & \q \text{ if } |y_{n-j}| < {n \choose j} \text{ and } y_j y_{n-j}\neq {n \choose j}^2 q  \\ \\
        \dfrac{|y_j|}{{n \choose j}} & \q \text{ if } y_j y_{n-j} = {n \choose j}^2 q \\ \\
    \end{cases}
\end{equation}
The following theorem provides a variety of characterizations for the points in $\Gn$ and $\Gamn$ and for proof to this result a reader is referred to \cite{pal-roy 4} or the unpublished note \cite{pal-roy 3} by the authors.

\begin{thm}[\cite{pal-roy 4}, Theorems 2.5 \& 2.7 ]  \label{char G 3}
    For a point $y =(y_1,\dots, y_{n-1},q) \in \Cn$, the following are equivalent:
    \begin{enumerate}
        \item[$(1)$] $y \in \Gn \quad (\text{respectively, } \in \Gamn )$;\\
        \item[$(2)$] ${n \choose j} - y_j z - y_{n-j}w + { n \choose j} qzw \neq 0$, for all $z,w \in \overline\D \quad (\text{respectively, for all } z,w\in \D )$ and for all $j = 1, \dots, \left[\frac{n}{2}\right]$;\\
        \item[$(3)$] for $j = 1, \dots, \left[\frac{n}{2}\right] $, $\; \n \Phi_j(.,y)\n_{H^{\infty}} < 1 \q ($respectively, $\leq 1 \; )$ and if $y_jy_{n-j}= {n \choose j}^2 q $ then in addition, $|y_{n-j}|< {n \choose j} \q ($ respectively, $\leq {n \choose j}\; )$;\\
        \item[$(3)'$] for $j = 1, \dots, \left[\frac{n}{2}\right] $, $\; \n \Phi_{n-j}(.,y)\n_{H^{\infty}} < 1 \q ($respectively, $\leq 1 \; )$ and if $y_jy_{n-j}= {n \choose j}^2 q $ then in addition, $|y_{j}|< {n \choose j} \q ($ respectively, $\leq {n \choose j}\; )$;\\
        \item[$(4)$] for $j = 1, \dots, \left[\frac{n}{2}\right]$, $\;
        {n \choose j}\left|y_j - \bar y_{n-j} q\right| + \left|y_j y_{n-j} - {n \choose j}^2 q \right| < {n \choose j}^2 -|y_{n-j}|^2
\quad (\text{respectively, } \leq {n \choose j}^2 -|y_{n-j}|^2 \text{ and if } y_jy_{n-j}= \nj^2 q \text{ then, in addition } |y_{n-j}| \leq \nj)        $;\\
    \item[$(4)'$] for $j = 1, \dots, \left[\frac{n}{2}\right]$, $\;
        {n \choose j}\left|y_{n-j} - \bar y_{j} q\right| + \left|y_j y_{n-j} - {n \choose j}^2 q \right| < {n \choose j}^2 -|y_{j}|^2
\quad (\text{respectively, } \leq {n \choose j}^2 -|y_{j}|^2 \text{ and if } y_jy_{n-j}= \nj^2 q \text{ then, in addition } |y_j| \leq \nj)        $;\\
        \item[$(5)$] for $j = 1, \dots, \left[\frac{n}{2}\right]$, $\; |y_j|^2 - |y_{n-j}|^2 + {n \choose j}^2|q|^2 + 2{n \choose j}\left|y_{n-j} - \bar y_j q\right| < {n \choose j}^2 \quad \left( \text{respectively, } \leq {n \choose j}^2 \right) \\ \text{ and } \; |y_{n-j}|< {n \choose j} \quad \left( \text{respectively, } |y_{n-j}|\leq {n \choose j} \right)$;\\
         \item[$(5)'$] for $j = 1, \dots, \left[\frac{n}{2}\right]$, $\; |y_{n-j}|^2 - |y_{j}|^2 + {n \choose j}^2|q|^2 + 2{n \choose j}\left|y_{j} - \bar y_{n-j} q\right| < {n \choose j}^2 \quad \left( \text{respectively, } \leq {n \choose j}^2 \right) \\ \text{ and } \; |y_{j}|< {n \choose j} \quad \left( \text{respectively, } |y_{j}|\leq {n \choose j} \right)$;\\       
        \item[$(6)$] $|q|<1 \quad ( \text{respectively, } \leq 1) $ and $\; |y_j|^2 + |y_{n-j}|^2 - {n \choose j}^2|q|^2 + 2\left| y_jy_{n-j} - {n \choose j}^2 q \right| < {n \choose j}^2 \q \left( \text{respectively, } \leq {n \choose j}^2 \right)$ for all $j = 1, \dots, \left[\frac{n}{2}\right]$;\\
        \item[$(7)$] $\left| y_{n-j} - \bar y_j q \right| + \left| y_j -\bar y_{n-j} q\right| < {n \choose j} (1 - |q|^2) \quad \left( \text{respectively, } \leq {n \choose j}(1-|q|^2) \right)$ for all $j = 1, \dots, \left[\frac{n}{2}\right]$;\\
        \item[$(8)$] there exist $2 \times 2$ matrices $B_1,\dots, B_{\left[\frac{n}{2}\right]}$ such that $\n B_j \n < 1 \; \; ( \text{respectively, } \leq 1)$, $y_j = {n \choose j}[B_j]_{11} $, $y_{n-j} = {n \choose j}[B_j]_{22}$ for all $j = 1, \dots, \left[\frac{n}{2}\right]$ and $\det B_1=  \dots = \det B_{\left[\frac{n}{2}\right]}= q$ ;\\
        \item[$(9)$] there exist $2 \times 2$ symmetric matrices $B_1,\dots, B_{\left[\frac{n}{2}\right]}$ such that $\n B_j \n < 1 \quad ( \text{respectively, } \leq 1)$, $y_j = {n \choose j}[B_j]_{11} $, $y_{n-j} = {n \choose j}[B_j]_{22}$ for all $j = 1, \dots, \left[\frac{n}{2}\right]$ and $\det B_1= \dots = \det B_{\left[\frac{n}{2}\right]}= q $.
    \end{enumerate}
\end{thm}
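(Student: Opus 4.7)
The plan is to establish all the stated conditions as equivalent by a single circular chain of implications, exploiting symmetry to halve the work. The primed conditions $(3)'$, $(4)'$, $(5)'$ are obtained from $(3)$, $(4)$, $(5)$ by the transposition $y_j \leftrightarrow y_{n-j}$; since the parametrization $y_j = \beta_j + \bar\beta_{n-j}q$ is symmetric under swapping $(j, n-j)$ (equivalently, $\beta_j \leftrightarrow \beta_{n-j}$), this transposition preserves both $\Gn$ and $\Gamn$, and it therefore suffices to establish the chain
\[
(1)\Rightarrow(9)\Rightarrow(8)\Rightarrow(2)\Rightarrow(3)\Rightarrow(4)\Rightarrow(5)\Rightarrow(6)\Rightarrow(7)\Rightarrow(1).
\]

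The central structural step is $(1) \Rightarrow (9)$, which realizes the $\beta$-parameters as the diagonal data of a symmetric contractive $2\times 2$ matrix. Given $y_j = \beta_j + \bar\beta_{n-j}q$ with $|\beta_j| + |\beta_{n-j}| < \binom{n}{j}$ and $|q|<1$, set
\[
B_j = \frac{1}{\binom{n}{j}}\begin{pmatrix} y_j & c_j \\ c_j & y_{n-j}\end{pmatrix},\qquad c_j^{\,2} = y_j y_{n-j} - \binom{n}{j}^2 q,
\]
so that $\det B_j = q$ automatically. Expanding $y_jy_{n-j} - \binom{n}{j}^2 q$ in the $\beta$-parameters, one verifies that the hypothesis $|\beta_j| + |\beta_{n-j}| < \binom{n}{j}$ together with $|q|<1$ forces $\n B_j\n < 1$. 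The step $(9) \Rightarrow (8)$ is trivial.

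For $(8) \Rightarrow (2)$ one uses the determinant identity
\[
\binom{n}{j}\det\bigl(I - B_j \operatorname{diag}(z,w)\bigr) = \binom{n}{j} - y_j z - y_{n-j}w + \binom{n}{j}q zw,
\]
so $\n B_j\n < 1$ forces non-vanishing of the right-hand side on $\overline\D \times \overline\D$. Next, $(2)\Leftrightarrow(3)$ is standard Möbius analysis: denoting the polynomial in $(2)$ by $P_j(z,w)$, the equation $P_j(z,w) = 0$ solves to $w = 1/\Phi_{n-j}(z,y)$, so non-vanishing of $P_j$ on $\overline\D^2$ translates to $\n\Phi_{n-j}(\cdot, y)\n_{H^\infty} < 1$; the symmetry $P_j(z,w) = P_{n-j}(w,z)$ then lets one replace $\Phi_{n-j}$ by $\Phi_j$ to match $(3)$. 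The equivalence $(3)\Leftrightarrow(4)$ is immediate from formula \eqref{formula-D}.

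The chain $(4)\Rightarrow(5)\Rightarrow(6)\Rightarrow(7)$ is a sequence of purely algebraic reformulations of a single positive-definiteness inequality; one squares $(4)$ and rearranges using identities that relate the six quantities $|y_j - \bar y_{n-j}q|$, $|y_{n-j} - \bar y_j q|$, $|y_j y_{n-j} - \binom{n}{j}^2 q|$, $\binom{n}{j}^2 - |y_j|^2$, $\binom{n}{j}^2 - |y_{n-j}|^2$ and $\binom{n}{j}(1 - |q|^2)$ to pass between the equivalent forms. Closing the loop with $(7) \Rightarrow (1)$ is explicit: for $|q|<1$, inverting the linear system $y_j = \beta_j + \bar\beta_{n-j}q$, $y_{n-j} = \beta_{n-j} + \bar\beta_j q$ gives
\[
\beta_j = \frac{y_j - \bar y_{n-j}q}{1 - |q|^2},\qquad \beta_{n-j} = \frac{y_{n-j} - \bar y_j q}{1 - |q|^2},
\]
and $(7)$ then rewrites as $|\beta_j| + |\beta_{n-j}| < \binom{n}{j}$ on the nose. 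The main obstacle throughout is a uniform treatment of the degenerate locus $y_jy_{n-j} = \binom{n}{j}^2 q$, where $\Phi_j(\cdot, y)$ collapses to a constant and the Möbius analysis breaks down; this forces the auxiliary clauses visible in $(3)$, $(3)'$, $(5)$ and $(5)'$ and must be handled as a separate case. The boundary points of $\Gamn$ with $|y_{n-j}| = \binom{n}{j}$ are then recovered from the interior by a limiting argument approximating from $\Gn$.
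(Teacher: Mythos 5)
First, a point of reference: the paper does not prove this theorem at all --- it imports it verbatim from \cite{pal-roy 4} (Theorems 2.5 and 2.7), so there is no in-paper argument to compare against; your proposal has to stand on its own. Its overall shape is the right one (it is essentially the Abouhajar--White--Young tetrablock scheme applied coordinate-pair by coordinate-pair), and several of your individual steps check out as stated: the determinant identity behind $(8)\Rightarrow(2)$, the M\"obius computation $w=1/\Phi_{n-j}(z,y)$ behind $(2)\Leftrightarrow(3)$ together with the symmetry $P_j(z,w)=P_{n-j}(w,z)$, the reading-off of $(3)\Leftrightarrow(4)$ from the displayed formula for $D_j(y)$, the reduction of the primed conditions by the transposition $y_j\leftrightarrow y_{n-j}$, and the clean identities $y_j-\bar y_{n-j}q=\beta_j(1-|q|^2)$, $y_{n-j}-\bar y_jq=\beta_{n-j}(1-|q|^2)$ that make $(7)\Leftrightarrow(1)$ immediate for $|q|<1$.

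The genuine gap is at your opening step $(1)\Rightarrow(9)$. For the symmetric matrix $B_j$ you construct, one has $\det(I-B_j^*B_j)=\binom{n}{j}^{-2}\big(\binom{n}{j}^2-|y_j|^2-|y_{n-j}|^2-2\,|y_jy_{n-j}-\binom{n}{j}^2q|+\binom{n}{j}^2|q|^2\big)$, so the assertion $\|B_j\|<1$ \emph{is} condition $(6)$, verbatim. In other words, the single hardest inequality of the theorem has been relocated into the step you present as a routine structural verification, and the verification you gesture at does not close: writing $A=\binom{n}{j}^2-|\beta_j|^2-|\beta_{n-j}|^2$ and $u=\beta_j\beta_{n-j}$, the expansion gives $y_jy_{n-j}-\binom{n}{j}^2q=u+\bar u q^2-qA$, and estimating this by the triangle inequality yields a lower bound for the quantity in $(6)$ that is genuinely negative (take $\beta_j=\beta_{n-j}=0.499\binom{n}{j}$, $q=0.9$, where $(6)$ itself holds with value $0.99\binom{n}{j}^2$); one is forced instead to prove $(1)\Rightarrow(3)$ first via the exact factorization of $|y_{n-j}z-\binom{n}{j}|^2-|\binom{n}{j}qz-y_j|^2$ in the $\beta$'s, or to use an identity-based argument of comparable length. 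The same criticism, in milder form, applies to the block $(4)\Rightarrow(5)\Rightarrow(6)\Rightarrow(7)$: these are not generic rearrangements but rest on specific identities such as $|y_jy_{n-j}-\binom{n}{j}^2q|^2=\binom{n}{j}^2|y_j-\bar y_{n-j}q|^2-\big(\binom{n}{j}^2-|y_{n-j}|^2\big)\big(|y_j|^2-\binom{n}{j}^2|q|^2\big)$, none of which you exhibit. Until those two blocks are filled in with the actual computations (and the degenerate locus $y_jy_{n-j}=\binom{n}{j}^2q$ and the boundary case $|q|=1$ of $\Gamn$ are treated, not merely flagged), the proposal is an itinerary rather than a proof.
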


\section{A Schwarz lemma for $\Gn$}\label{section Schwarz lemma}

\vspace{0.3cm}

\noindent In this section, we shall make several sharp estimates and obtain a Schwarz type lemma for the extended symmetrized polydisc $\Gn$. We begin with an elementary lemma which will be used in the proof of the main result of this section.

\begin{lemma}\label{lemma-3.3}
    Let $\vp  : \D \longrightarrow \Gamn$ be an analytic function such that $\vp(\lm_0) \in \Gn$ for some $\lm_0 \in \D$, then $\vp(\D) \subset \Gn$.
\end{lemma}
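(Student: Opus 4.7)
The plan is to apply characterization $(3)$ of Theorem \ref{char G 3}: to conclude $\vp(\lm) \in \Gn$ for a given $\lm \in \D$ it suffices to verify (i) $\n \Phi_j(\cdot, \vp(\lm)) \n_{H^\infty} < 1$ for each $j = 1, \dots, [n/2]$, together with (ii) the auxiliary requirement that $|y_{n-j}(\lm)| < \nj$ whenever $y_j(\lm)\, y_{n-j}(\lm) = \nj^2 q(\lm)$, where I write $\vp = (y_1, \dots, y_{n-1}, q)$. I shall obtain both by two applications of the classical one-variable maximum modulus principle, the second applied to a two-variable holomorphic extension of $\Phi_j(\cdot,\vp(\cdot))$.

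First I address the auxiliary estimates. Since $\vp(\D) \subset \Gamn$, each coordinate $y_j, y_{n-j}, q$ is a bounded holomorphic function on $\D$ satisfying $|y_j|, |y_{n-j}| \le \nj$ and $|q| \le 1$. The hypothesis $\vp(\lm_0) \in \Gn$ makes all three of these bounds strict at the single point $\lm_0$. The maximum modulus principle (any bounded holomorphic function on $\D$ that attains its supremum modulus in the interior must be a constant of that modulus) therefore forces $|y_j(\lm)|, |y_{n-j}(\lm)| < \nj$ and $|q(\lm)| < 1$ for every $\lm \in \D$. In particular (ii) holds unconditionally for all $\lm \in \D$.

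The heart of the argument is (i). I introduce the two-variable function
\[
\Psi_j(\lm, z) \; = \; \Phi_j(z, \vp(\lm)) \; = \; \dfrac{\nj\, q(\lm)\, z - y_j(\lm)}{y_{n-j}(\lm)\, z - \nj}.
\]
Because the preparatory step yields $|y_{n-j}(\lm)| < \nj$ on $\D$, the denominator is nonzero throughout $\D \times \overline \D$, so $\Psi_j$ extends to a function holomorphic on an open neighbourhood of $\D \times \overline \D$. Characterization $(3)$ applied to $\vp(\lm) \in \Gamn$ gives $|\Psi_j(\lm,z)| \le 1$ on $\D \times \D$, and hence by continuity on $\D \times \overline \D$. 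At $\lm = \lm_0$ the M\"obius transformation $\Phi_j(\cdot, \vp(\lm_0))$ is analytic on $\overline \D$ (its pole lies strictly outside $\overline \D$), so its maximum over $\overline \D$ coincides with its $H^\infty$-norm, which is strictly less than $1$ by $\vp(\lm_0) \in \Gn$. Consequently $|\Psi_j(\lm_0, z)| < 1$ for every $z \in \overline \D$.

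Now I slice in the other variable: for each fixed $z \in \overline \D$, $\lm \mapsto \Psi_j(\lm, z)$ is a holomorphic map from $\D$ into $\overline \D$ whose value at $\lm_0$ lies strictly inside $\D$. The one-variable maximum modulus principle upgrades this to $|\Psi_j(\lm, z)| < 1$ for every $\lm \in \D$. Taking the maximum over the compact set $\overline \D$ in $z$ yields $\n \Phi_j(\cdot, \vp(\lm)) \n_{H^\infty} = \max_{z \in \overline \D}|\Psi_j(\lm, z)| < 1$, which is (i); together with (ii) it forces $\vp(\lm) \in \Gn$ by Theorem \ref{char G 3}(3). The only potentially delicate point is the need to evaluate $\Psi_j$ on the closed disc $\overline \D$ in the $z$ variable (so that the \emph{pointwise} strict bound produced by the sliced maximum principle transfers to the $H^\infty$-norm); this is precisely why the preparatory strict estimate $|y_{n-j}(\lm)| < \nj$, rather than merely $\le \nj$, from the first step is indispensable.
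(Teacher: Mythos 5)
Your argument is correct and follows essentially the same route as the paper's own proof: first upgrade $|\vp_{n-j}(\lm)|\leq\nj$ to a strict inequality on all of $\D$ using the interior point $\lm_0$, then slice $\Phi_j(z,\vp(\cdot))$ in $\lm$ for each fixed $z\in\overline\D$ and apply the one-variable maximum principle (the paper phrases this via the open mapping theorem) before taking the supremum over the compact set $\overline\D$. Your explicit remark on why the strict bound $|y_{n-j}(\lm)|<\nj$ is needed to pass from the pointwise strict inequality to the $H^\infty$-norm bound is a welcome clarification of a step the paper leaves implicit, but the substance is identical.
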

\begin{proof}
    Write $\vp = (\vp_1,\dots,\vp_n)$. Suppose $\vp(\lm_0) \in \Gn$, for some $\lm_0 \in \D$. Then, by Theorem $\ref{char G 3}$, we have
    \[
    \Phi_j(\overline{\mathbb{D}},\vp(\lm_0)) \subset \D \q \text{and} \q |\vp_{n-j}(\lm_0)| < \nj , \q \q 1 \leq j \leq \left[\frac{n}{2}\right] .
    \]
Since the function $\vp : \D \longrightarrow \Gamn$ is analytic, the functions $\vp_{n-j}$ is also analytic on $\D$ and $|\vp_{n-j}(\lm)| \leq \nj$.
    Therefore, by Open mapping theorem, $|\vp_{n-j}(\lm)| < \nj$ for any $\lm \in \D$. Since $\vp(\lambda) \in \Gamn$, by Theorem \ref{char G 3}, we have $\n \Phi_j(., \vp(\lm))\n \leq 1$ for any $\lm \in \D$. Hence,
    \[
    \left| \Phi_j(z, \vp(\lm)) \right| \leq 1\q \text{for any } \lm \in \D, \, z \in \overline{\mathbb{D}}\;  \text{ and } 1 \leq j \leq \left[\frac{n}{2}\right] .
    \]
    For $z \in \overline{\mathbb{D}}$ and $j \in \ls 1, \dots,\left[\frac{n}{2}\right] \rs $, we define a function $g_z^j : \D \rightarrow \overline{\mathbb{D}}$ by $g_z^j(\lm)= \Phi_j(z, \vp(\lm))$. Clearly the function $g_z^j$ is analytic on $\D$ and $g_z^j(\lm_0)= \Phi_j(z, \vp(\lm_0)) \in \D$. Therefore, by Open mapping theorem, $g_z^j(\lambda) \in \D$ for all $\lambda \in \D$. 
    Consequently,  we have
    $ \n \Phi_j(.,\vp(\lm)) \n <1 $ for any $\lm \in \D$ and for any $j \in \ls 1, \dots,\left[\frac{n}{2}\right] \rs$.
    By Theorem \ref{char G 3}, we have $\vp(\lm) \in \Gn$ for any $\lm \in \D$. Hence $\vp(\D) \subset \Gn$.
\end{proof}

The following corollary is an immediate consequence of the preceding lemma.

\begin{cor}\label{Gn Gamn}
    Let $\lm_0 \in \D \; \backslash \; \{0\}$ and let $ y^0= (y_1^0,\dots,y_{n-1}^0,q^0) \in \Gn$. Then the following conditions are equivalent :
    \item[$(1)$]
    there exists an analytic function $\vp  :  \D \rightarrow  \Gamn $ such that  $ \; \vp(0) = (0,\dots,0) $ and $ \; \vp(\lm_0) = y^0 $;
    \item[$(2)$]
    there exists an analytic function $\vp  :  \D \rightarrow  \Gn $ such that  $\; \vp(0) = (0,\dots,0) $ and $\; \vp(\lm_0) = y^0 $.
\end{cor}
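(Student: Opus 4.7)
The plan is to derive this corollary as an essentially immediate consequence of Lemma \ref{lemma-3.3}, with the two implications being of very different character: one is trivial by inclusion, while the other is a direct application of the propagation result just established.

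For the direction $(2) \Rightarrow (1)$, I would simply note that $\Gn \subseteq \Gamn$, so any analytic map $\vp : \D \to \Gn$ satisfying the two interpolation conditions is automatically a valid map $\vp : \D \to \Gamn$ satisfying the same interpolation conditions. No further argument is needed here.

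For the direction $(1) \Rightarrow (2)$, suppose $\vp : \D \to \Gamn$ is analytic with $\vp(0) = (0,\dots,0)$ and $\vp(\lm_0) = y^0 \in \Gn$. The crucial observation is that the value at the nonzero point $\lm_0$ lies in the open domain $\Gn$, not merely in its closure. Lemma \ref{lemma-3.3} then applies verbatim with this $\lm_0$ in the role of the hypothesis ``$\vp(\lm_0) \in \Gn$ for some $\lm_0 \in \D$'', and yields $\vp(\D) \subset \Gn$. Thus the very same $\vp$ serves as the required map into $\Gn$, preserving both interpolation conditions $\vp(0)=(0,\dots,0)$ and $\vp(\lm_0)=y^0$.

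The only ``obstacle'' worth flagging is that the hypothesis $y^0 \in \Gn$ (as opposed to merely $y^0 \in \Gamn$) is what activates Lemma \ref{lemma-3.3}; the assumption $\lm_0 \neq 0$ is not actually used in the argument, since the value $0$ at the origin already lies in $\Gn$, but it is stated in the corollary for consistency with the framework of the Schwarz lemma to follow. Hence the proof reduces to citing Lemma \ref{lemma-3.3} for one implication and the inclusion $\Gn \subseteq \Gamn$ for the other.
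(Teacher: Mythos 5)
Your proposal is correct and is exactly the argument the paper intends: the authors state the corollary as an immediate consequence of Lemma \ref{lemma-3.3}, using precisely your two steps — the inclusion $\Gn \subseteq \Gamn$ for one direction and the propagation lemma (activated by $y^0 = \vp(\lm_0) \in \Gn$) for the other. Your side remark that the hypothesis $\lm_0 \neq 0$ plays no role in this particular argument is also accurate.
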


We state below a few preparatory results. For a strict $2\times 2$ matrix contraction $Z$ (i.e., $\|Z\|<1$), a matricial M\"{o}bius transformation $\M_Z$ is defined as
\[
\M_Z(X) = (1- ZZ^*)^{-\frac{1}{2}} (X-Z)(1- Z^*X)^{-1} (1-Z^*Z)^{\frac{1}{2}}\;, \quad X\in \C^{2\times 2} \text{ and } \n X \n <1.
\]
The map $\M_Z$ is an automorphism of the close unit ball of $\C^{2 \times 2}$ which maps $Z$ to the zero matrix and $\M_{Z}^{-1}=\M_{-Z}$.
\begin{lemma}[\cite{awy}, Lemma $3.1$]\label{lemma-3.1}
    Let $Z \in \C^{2\times 2}$ be such that $\n Z \n < 1$ and let $0 \leq \rho < 1$. Let
    \begin{equation}{\label{M-rho}}
        \mathcal{K}_Z(\rho) =\begin{bmatrix}
            [(1-\rho^2 Z^*Z)(1-Z^*Z)^{-1}]_{11} & [(1- \rho^2)(1 - ZZ^*)^{-1}Z]_{21} \\
            [(1 - \rho^2)Z^*(1 - ZZ^*)^{-1}]_{12} & [(ZZ^* - \rho^2)(1 - ZZ^*)^{-1}]_{22}
        \end{bmatrix}.
    \end{equation}
    \item[$(1)$]
    There exists $X\in \C^{2\times 2}$ such that $\n X \n \leq \rho$ and $[\M_{-Z} (X)]_{22} = 0$ if and only if $\det \mathcal{K}_Z(\rho) \leq 0$.
    \item[$(2)$]
    For any $2\times 2$ matrix X, $[\M_{-Z} (X)]_{22} = 0$ if and only if there exists $\al \in \C^2  \setminus \{0\} $ such that $$ X^*u(\al) = v(\al)$$
    where
    \begin{align}{\label{u-v}}
        & u(\al) = (1-ZZ^*)^{-\frac{1}{2}}(\al_1Ze_1 + \al_2e_2),\\
        \nonumber & v(\al) = -(1-Z^*Z)^{-\frac{1}{2}}(\al_1e_1 + \al_2Z^*e_2)
    \end{align}
    and $e_1 , e_2$ is the standard basis of $\C^2$.
\end{lemma}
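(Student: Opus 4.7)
The plan is to establish Part (2) first by direct algebraic manipulation, and then reduce Part (1) to Part (2) via a solvability criterion for rank-one matrix equations. For Part (2), I expand $v(\al) - X^*u(\al)$, apply the intertwining identities $(1-ZZ^*)^{-1/2}Z = Z(1-Z^*Z)^{-1/2}$ and its adjoint, and regroup to get
\[
v(\al) - X^*u(\al) = -\al_1(I+X^*Z)(1-Z^*Z)^{-1/2}e_1 - \al_2(X+Z)^*(1-ZZ^*)^{-1/2}e_2 =: -\al_1 P - \al_2 Q.
\]
Thus a nonzero $\al$ with $X^*u(\al) = v(\al)$ exists iff $P$ and $Q$ are linearly dependent in $\C^2$, i.e. $\det[P\,|\,Q] = 0$. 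Setting $M := (I+X^*Z)(1-Z^*Z)^{-1/2}$, which is invertible since $\|X\|\|Z\|<1$, one has $P = Me_1$ and hence $\det[P\,|\,Q] = \det M \cdot e_2^*M^{-1}Q$. A direct computation shows
\[
M^{-1}Q = (1-Z^*Z)^{1/2}(I+X^*Z)^{-1}(X+Z)^*(1-ZZ^*)^{-1/2}e_2,
\]
whose second entry is exactly $e_2^*\M_{-Z}(X)^*e_2 = \overline{[\M_{-Z}(X)]_{22}}$. Since $\det M \neq 0$, this proves Part (2).

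For Part (1), by Part (2) we must decide when $\al \neq 0$ and $X$ with $\|X\| \leq \rho$ can be chosen to satisfy $X^*u(\al) = v(\al)$. For $\al$ with $u(\al) \neq 0$, Douglas' lemma (equivalently, the rank-one candidate $X^* = v(\al)u(\al)^*/\|u(\al)\|^2$ is extremal) yields such an $X$ iff $\|v(\al)\| \leq \rho\|u(\al)\|$; the degenerate cases $u(\al) = 0$ are handled directly. Writing $\|u(\al)\|^2 = \al^*U\al$ and $\|v(\al)\|^2 = \al^*V\al$ for explicit Hermitian $2\times 2$ matrices $U$ and $V$ built from $Z$, the condition becomes: there exists $\al \neq 0$ with $\al^*(V-\rho^2 U)\al \leq 0$. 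An entry-by-entry verification, using $Z^*(1-ZZ^*)^{-1}Z = (1-Z^*Z)^{-1}-I$ and its companion identity, then produces $V - \rho^2 U = \mathcal K_Z(\rho)$ exactly as defined in the statement.

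Finally, because $(1-Z^*Z)^{-1} \geq I$,
\[
[\mathcal K_Z(\rho)]_{11} = \rho^2 + (1-\rho^2)\,e_1^*(1-Z^*Z)^{-1}e_1 \geq 1 > 0,
\]
so $\mathcal K_Z(\rho)$ always has a strictly positive eigenvalue. For a $2\times 2$ Hermitian matrix with a positive eigenvalue, the smaller eigenvalue is $\leq 0$ iff the determinant is $\leq 0$, which is in turn equivalent to the existence of a nonzero $\al$ with $\al^*\mathcal K_Z(\rho)\al \leq 0$. This finishes Part (1). The main obstacle I anticipate is the matrix identity $V - \rho^2 U = \mathcal K_Z(\rho)$, which requires careful bookkeeping through the intertwining relations for all four entries of the $2\times 2$ matrix; the $P$-$Q$ factorization in Part (2) is straightforward once the intertwining identities are applied in the right order.
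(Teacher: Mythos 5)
This lemma is quoted in the paper from \cite{awy} (Lemma 3.1 there) and the paper supplies no proof of it, so there is no in-paper argument to compare yours against; it has to be judged on its own. It is correct, and it follows essentially the route of the original proof in \cite{awy}: part $(2)$ by reducing the vanishing of $[\M_{-Z}(X)]_{22}$ to linear dependence of the two vectors $P=(I+X^*Z)(1-Z^*Z)^{-1/2}e_1$ and $Q=(X+Z)^*(1-ZZ^*)^{-1/2}e_2$, and part $(1)$ by the observation that $X^*u=v$ is solvable with $\|X\|\le\rho$ if and only if $\|v\|\le\rho\|u\|$, which turns the problem into the indefiniteness of a Hermitian quadratic form in $\al$. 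Two small points. First, in part $(2)$ the matrix $X$ is arbitrary, so the invertibility of $M=(I+X^*Z)(1-Z^*Z)^{-1/2}$ should be justified not from $\|X\|\,\|Z\|<1$ (which need not hold there) but from the fact that $I+Z^*X$ must be invertible for $\M_{-Z}(X)$ to be defined, and $I+X^*Z=(I+Z^*X)^*$. Second, with the usual convention $\al^*M\al=\sum_{i,j}\overline{\al_i}\,M_{ij}\,\al_j$ one actually gets $V-\rho^2U=\overline{\mathcal K_Z(\rho)}=\mathcal K_Z(\rho)^{T}$ rather than $\mathcal K_Z(\rho)$ itself: for instance the coefficient of $\overline{\al_2}\al_1$ in $\|v(\al)\|^2-\rho^2\|u(\al)\|^2$ is $(1-\rho^2)[(1-ZZ^*)^{-1}Z]_{21}$, which is the $(1,2)$ entry of $\mathcal K_Z(\rho)$. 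Since $\mathcal K_Z(\rho)$ is Hermitian, transposition changes neither the determinant nor the signature, so your conclusion is unaffected, but the identity as you stated it is off by a transpose. The final step --- positivity of the $(1,1)$ entry forces a positive eigenvalue, so the existence of a nonzero $\al$ making the form nonpositive is equivalent to $\det\mathcal K_Z(\rho)\le 0$ --- is exactly right.
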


\begin{lemma}[\cite{awy}, Lemma $3.2$]\label{lemma-3.2}
    Let $\lm_0 \in \D \setminus \{0\}$ let $Z \in \C^{2 \times 2}$ satisfy $\n Z \n < 1$ and let $\mathcal{K}_Z(\cdot)$ be given by equation $\eqref{M-rho}$,
    \item[$(1)$]
    There exists a function $G$ such that
    \begin{equation}{\label{G}}
        G \in S_{2 \times 2},\; \; [G(0)]_{22}=0 \text{ and } G(\lm_0)=Z
    \end{equation}
    if and only if $\det \mathcal{K}_Z(|\lm_0|)\leq 0$.
    \item[$(2)$]
    A function $G \in S_{2 \times 2}$ satisfies the conditions $\eqref{G}$ if and only if there
    exists $\al \in \C^2 \setminus \{0\}$ such that $\la\det \mathcal{K}_Z(|\lm_0|)\al, \al \ra \leq 0$ and a
    Schur function $Q$ such that $Q(0)^* \bar{\lm}_0u(\al)=v(\al)$ and $G = \M_{-Z}\circ (BQ)$,
    where $u(\al), v(\al)$ are given by equation $\eqref{u-v}$ and
    $B$ is the Blaschke factor
    \begin{equation}{\label{blaschke}}
        B(\lm) = \df{\lm_0 - \lm}{1 - \bar{\lm}_0\lm}.
    \end{equation}
\end{lemma}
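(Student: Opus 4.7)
The plan is to reduce the matrix-valued two-point interpolation problem to the purely algebraic statement of Lemma~\ref{lemma-3.1}, using the automorphism $\M_Z$ of the closed unit ball of $\C^{2\times 2}$ to normalize the value at $\lm_0$ to zero, then divide out a scalar Blaschke factor.

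The first step is to introduce $H = \M_Z \circ G$. Since $\M_Z$ maps the closed unit ball of $\C^{2\times 2}$ to itself and sends $Z$ to $0$, if $G \in S_{2\times 2}$ with $G(\lm_0) = Z$, then $H \in S_{2\times 2}$ with $H(\lm_0) = 0$. A standard matrix-valued Schwarz/Schur-type argument (apply the scalar Schwarz lemma coordinatewise to $H(\lm)/B(\lm)$ acting on an arbitrary vector of norm one, using that $|B(\lm)| < 1$ on $\D$ and $H(\lm_0)=0$) shows that $H$ factors as $H(\lm) = B(\lm) Q(\lm)$ for some $Q \in S_{2\times 2}$, where $B$ is the Blaschke factor in \eqref{blaschke}. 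Inverting via $\M_Z^{-1} = \M_{-Z}$ gives $G = \M_{-Z} \circ (BQ)$. Conversely, every such composition produces a function in $S_{2\times 2}$ with $G(\lm_0) = Z$, so the existence of $G$ is equivalent to the existence of some $Q \in S_{2\times 2}$ making $G$ satisfy the remaining condition $[G(0)]_{22} = 0$.

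Evaluating at $\lm = 0$ and noting $B(0) = \lm_0$, the condition $[G(0)]_{22} = 0$ becomes $[\M_{-Z}(\lm_0 Q(0))]_{22} = 0$. Setting $X := \lm_0 Q(0) \in \C^{2\times 2}$, the constraint $\n Q(0) \n \leq 1$ translates to $\n X \n \leq |\lm_0|$, so we are in precisely the situation of Lemma~\ref{lemma-3.1} with $\rho = |\lm_0|$. Part~(1) then follows immediately: by Lemma~\ref{lemma-3.1}(1) the existence of such an $X$, hence of $G$, is equivalent to $\det \mathcal{K}_Z(|\lm_0|) \leq 0$. For the sufficiency direction, having produced such an $X$ one takes the constant Schur function $Q(\lm) \equiv X/\lm_0$ and defines $G := \M_{-Z} \circ (BQ)$.

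For part (2), we combine the factorization $G = \M_{-Z} \circ (BQ)$ with Lemma~\ref{lemma-3.1}(2). The condition $[\M_{-Z}(X)]_{22} = 0$ with $X = \lm_0 Q(0)$ is equivalent to the existence of $\al \in \C^2 \setminus \{0\}$ with $X^* u(\al) = v(\al)$, i.e. $Q(0)^* \bar\lm_0 u(\al) = v(\al)$, exactly as in the statement. The additional norm constraint $\n X \n \leq |\lm_0|$ is captured by the quadratic-form inequality on $\mathcal{K}_Z(|\lm_0|)$ tested against the same $\al$. Conversely, given $\al$ and any Schur function $Q$ satisfying the boundary equation $Q(0)^* \bar\lm_0 u(\al) = v(\al)$, one verifies that $G := \M_{-Z} \circ (BQ)$ lies in $S_{2\times 2}$, satisfies $G(\lm_0) = Z$, and has $[G(0)]_{22} = 0$, so the parametrization is exhaustive. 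The main technical hurdle is the equivalence between the operator-norm constraint $\n X \n \leq |\lm_0|$ and the existence of a suitable $\al$ making the quadratic form nonpositive; this is the substantive content of Lemma~\ref{lemma-3.1}(2) and is what allows us to replace an a priori norm bound on an unknown matrix by a finite-dimensional inequality on a vector~$\al$, which can be checked directly.
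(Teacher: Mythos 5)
This lemma is imported verbatim from \cite{awy} (Lemma 3.2 there); the paper states it without proof, so there is no internal argument to compare against. Your proposal is correct and is essentially the original proof: normalize by the matricial M\"obius map $\M_Z$, divide out the Blaschke factor $B$ (via the maximum principle applied to $\langle H(\lm)x,y\rangle/B(\lm)$ for unit vectors) to obtain $G=\M_{-Z}\circ(BQ)$, and reduce $[G(0)]_{22}=0$ to Lemma \ref{lemma-3.1} with $X=\lm_0Q(0)$ and $\rho=|\lm_0|$. The one imprecision is attributing the equivalence between the norm bound $\|X\|\leq|\lm_0|$ and the condition $\la\mathcal{K}_Z(|\lm_0|)\al,\al\ra\leq 0$ to Lemma \ref{lemma-3.1}(2) as stated: that equivalence in fact rests on the identity $|\lm_0|^2\|u(\al)\|^2-\|v(\al)\|^2=-\la\mathcal{K}_Z(|\lm_0|)\al,\al\ra$, which appears inside the proof of Lemma 3.1 in \cite{awy} rather than in its statement, so it deserves an explicit line in a self-contained write-up.
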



Let $B_1,\dots, B_k$ be $2 \times 2$ contractive matrices such that $\det B_1= \det B_2 = \cdots = \det B_k$. We define two functions $\pi_{2k+ 1}$ and $\pi_{2k}$ in the following way:\\

$\pi_{2k+1} \lf B_1,\dots, B_k \rf 
= \Big( {n \choose 1} [B_1]_{11},  \dots , {n \choose k} [B_k]_{11},  {n \choose k} [B_k]_{22}, 
{n \choose k-1} [B_{k-1}]_{22}  \dots, {n \choose 1} [B_1]_{22} , \det B_1 \Big) $\\
and
\begin{align*}
    \pi_{2k} \lf B_1,\dots, B_k \rf
	=\Bigg( {n \choose 1} [B_1]_{11}, \dots , & {n \choose k-1} [B_{k-1}]_{11}, {n \choose k} \df{\lf [B_k]_{11} + [B_k]_{22} \rf}{2},\\ & {n \choose k-1} [B_{k-1}]_{22}, \dots, {n \choose 1} [B_1]_{22}, \det B_1  \Bigg).
\end{align*}
Then it is evident from part-(8) of Theorem \ref{char G 3} that
$\pi_{2k} \lf B_1,\dots, B_k \rf \in \widetilde{\mathbb G}_{2k}$ and $\pi_{2k+1} \lf B_1,\dots, B_k \rf \in \widetilde{\mathbb G}_{2k+1}$. For $ n\geq 3$, let $\mathcal J_n$ denote the following subset of $\Gn$:
\[
\mathcal J_n = 
\begin{cases}
\mathcal J_n^{odd} & \text{ if } n \text{ is odd}\\
\mathcal J_n^{even} & \text{ if } n \text{ is even},
\end{cases}
\]
where
\begin{align*}
	\mathcal J_n^{odd} = \Big\{ \lf y_1, \dots, y_{n-1}, y_n \rf \in \Gn : y_j = \dfrac{\nj}{n} y_1,\; y_{n-j} = \dfrac{\nj}{n}  y_{n-1}\, , \text{for } j=2,\dots, \left[\frac{n}{2} \right] \Big\}
\end{align*}
and 
\begin{align*}
	 \mathcal J_n^{even} = \Big\{ \lf y_1, \dots, y_{n-1}, y_n \rf \in \Gn & : y_{[\frac{n}{2} ]}= \dfrac{{n \choose [\frac{n}{2} ]}}{n} \df{y_1 + y_{n-1}}{2}\,,\;  y_j = \dfrac{\nj}{n} y_1 \,, \\ & y_{n-j} = \dfrac{\nj}{n}  y_{n-1},  \text{ for }j=2,\dots, \left[\frac{n}{2} \right]-1  \Big\}.
\end{align*}
It is merely mentioned that $\mathcal J_n=\Gn$ for $n=1,2,3$. For $ y= (y_1,\dots,y_{n-1},q)$ in $\Cn$, we consider the following two sets. For odd positive integer $n$ we have
\begin{align*}
    \mathcal F_1(y) = \Big\{ \tilde y = \lf \tilde y_1, \dots, \tilde y_{n-1}, \tilde q \rf \in &\C^n  :\;  \tilde q = \dfrac{q}{\lambda_0}, \text{ either } \tilde y_j = \dfrac{y_j}{\lambda_0} \text{ and } \tilde y_{n-j}= y_{n-j},\\
    & \text{ or } \tilde y_j = y_j \text{ and } \tilde y_{n-j} = \dfrac{y_{n-j}}{\lambda_0}, \text{ for } j=1,\dots,\lt \frac{n}{2} \rt \Big\} .
\end{align*}
For even positive integer $n$ we have
\begin{align*}
    \mathcal F_2(y) = \Bigg\{ \Bigg( \dfrac{n+1}{n} \tilde y_1, \dots, \dfrac{n+1}{\frac{n}{2}+1}\tilde y_{\frac{n}{2}},& \frac{n+1}{\frac{n}{2}+1} y_{\frac{n}{2}}, \dfrac{n+1}{\frac{n}{2}+2} \tilde y_{\frac{n}{2}+1}, \dots, \dfrac{n+1}{n} \tilde y_{n-1}, \tilde q \Bigg) \in \C^{n+1}:\\
    \tilde q = \dfrac{q}{\lambda_0}&, \; \tilde y_{\frac{n}{2}}= \dfrac{y_{\frac{n}{2}}}{\lambda_0} \text{ and for } j=1,\dots, \frac{n}{2}-1, \\
    \text{ either } \tilde y_j = \dfrac{y_j}{\lambda_0}& \; \text{ and } \tilde y_{n-j}= y_{n-j},\, \text{ or }\, \tilde y_j = y_j \text{ and } \tilde y_{n-j} = \dfrac{y_{n-j}}{\lambda_0} \Bigg\}.
\end{align*}
\noindent Note that for a point $\hat y =(\hat y_1,\dots,\hat y_n, \widehat q) \in\mathcal F_2(y)$ and for $j=1,\dots, \frac{n}{2}-1$, the $j$-th and $(n+1-j)$-th components of $\hat y$ are respectively the following:
\[\hat y_j = \dfrac{(n+1)}{(n+1-j)} \tilde y_j \,, \q \hat y_{n+1-j} = \dfrac{(n+1)}{(n+1-j)} \tilde y_{n-j}.\]
Also the $\frac{n}{2}$-th and $(\frac{n}{2}+1)$-th components of $\hat y$ are respectively the following:
\[
\hat y_{\frac{n}{2}}= \dfrac{n+1}{\frac{n}{2}+1}\tilde y_{\frac{n}{2}} \q \text{and} \q \hat y_{\frac{n}{2}+1}= \frac{n+1}{\frac{n}{2}+1} y_{\frac{n}{2}} \;.
\]
Below we define Schur class of functions which is essential in this context.
\begin{defn}
The \textit{Schur class} of type $m \times n$ is the set of analytic functions $F$ on $\mathbb{D}$ with values in the space $\mathbb{C}^{m\times n}$ such that $\lVert F(\lambda) \lVert \leq 1$ for all $\lambda \in \mathbb{D}$. Also we say that $F\in \mathcal{S}_{m \times n}$ if $\lVert F(\lambda) \lVert < 1$ for all $\lambda \in \mathbb{D}$.
\end{defn}

Being armed with the necessary results, we are now in a position to state and prove a Schwarz lemma for $\Gn$. This is the main result of this section.

\begin{thm}\label{Schwarz Gn}
    Let $\lm_0 \in \D \; \backslash \; \{0\}$ and let $ y^0= (y_1^0,\dots,y_{n-1}^0,q^0) \in \Gn$. Then in the set of following conditions, $(1)$ implies each of $(2)-(11)$. Moreover, $(3),(4),(6)-(11)$ are all equivalent.
    \item[$(1)$]
    There exists an analytic function $\vp  :  \D \rightarrow  \Gn $ such that  $\; \vp(0) = (0,\dots,0) $ and $\; \vp(\lm_0) = y^0 $.
    \item[$(2)$]
    \[
    \max_{1\leq j \leq n-1} \ls \n \Phi_j(.,y^0)\n_{H^{\infty}} \rs \leq |\lm_0| \, .
    \]

    \item[$(3)$]
    For each $j =1 , \dots,\left[\frac{n}{2}\right] $ the following hold
    \[
    \begin{cases}
    \n \Phi_j(.,y^0) \n_{H^{\infty}} \leq |\lm_0|  \q & \text{if } \; |y_{n-j}^0|\leq |y_j^0| \\
    \n \Phi_{n-j}(.,y^0) \n_{H^{\infty}} \leq |\lm_0| \q & \text{if } \; |y_j^0|\leq |y_{n-j}^0| \, .
    \end{cases}\]

    \item[$(4)$]
    If $n$ is odd and $\tilde y = \lf \tilde y_1, \dots, \tilde y_{n-1}, \tilde q \rf \in \mathcal F_1(y^0)$, where
        \[
    \begin{cases}
    \tilde y_j = \dfrac{y_j^0}{\lambda_0} \text{ and } \tilde y_{n-j}= y_{n-j}^0 & \text{when } |y_{n-j}^0| \leq |y_j^0|\\
    \tilde y_j = y_j^0 \text{ and } \tilde y_{n-j} = \dfrac{y_{n-j}^0}{\lambda_0} & \text{otherwise}
    \end{cases}
    \]
    for each $j = 1 , \dots,\left[\frac{n}{2}\right]$, then $\tilde y \in \Gamn$. If $n$ is even and
    \[
    \hat y = \left( \frac{n+1}{n} \tilde y_1, \dots, \frac{n+1}{\frac{n}{2}+1}\tilde y_{\frac{n}{2}}, \frac{n+1}{\frac{n}{2}+1}  y_{\frac{n}{2}}^0, \frac{n+1}{\frac{n}{2}+2} \tilde y_{\frac{n}{2}+1}, \dots, \frac{n+1}{n} \tilde y_{n-1}, \tilde q \right) \in \mathcal F_2(y^0)\,,
    \]
     where for each $j = 1 , \dots,\frac{n}{2}-1$,
    \[
    \begin{cases}
    \tilde y_j = \dfrac{y_j^0}{\lambda_0} \text{ and } \tilde y_{n-j}= y_{n-j}^0 & \text{when } |y_{n-j}^0| \leq |y_j^0|\\
    \tilde y_j = y_j^0 \text{ and } \tilde y_{n-j} = \dfrac{y_{n-j}^0}{\lambda_0} & \text{otherwise,}
    \end{cases}
    \]
     then $\hat y \in \widetilde{\Gamma}_{n+1}$.\\

    \item[$(5)$]  There exist functions $F_1, F_2, \dots , F_{\left[\frac{n}{2}\right]}$ in the Schur class such that
    $F_j(0) =
    \begin{bmatrix}
    0 & * \\
    0 & 0
    \end{bmatrix} ,\:$
    and $\; F_j(\lm_0) = B_j$, for $j = 1, \dots, \left[\frac{n}{2}\right]$, where $\det B_1= \cdots = \det B_{[\frac{n}{2}]}= q^0$,
    $y_j^0 = \nj [B_j]_{11}$ and $y_{n-j}^0 = \nj [B_j]_{22}$

    \item[$(6)$]
    For each $j =1 , \dots,\left[\frac{n}{2}\right] $ the following hold
    \[
    \begin{cases}
    \dfrac{{n \choose j}\left|y_j^0 - \overline y_{n-j}^0 q^0\right| + \left|y_j^0 y_{n-j}^0 - {n \choose j}^2 q^0 \right|}{{n \choose j}^2 -|y_{n-j}^0|^2} \leq |\lambda_0|  \q & \text{if } \; |y_{n-j}^0|\leq |y_j^0| \\
    \dfrac{{n \choose j}\left|y_{n-j}^0 - \overline{y^0_j} q^0 \right| + \left|y_j^0 y_{n-j}^0 - {n \choose j}^2 q^0 \right|}{{n \choose j}^2 -|y_j^0|^2} \leq |\lambda_0| \q & \text{if } \; |y_j^0|\leq |y_{n-j}^0| \, .
    \end{cases}\]

    \item[$(7)$]
    For each $j =1 , \dots,\left[\frac{n}{2}\right] $ the following hold
    \[
    \begin{cases}
    \nj \lm_0 - y_j^0 z - y_{n-j}^0 \lm_0 w + \nj q^0zw \neq 0, \text{ for all } z,w \in \D  \q & \text{if } \; |y_{n-j}^0|\leq |y_j^0| \\
    \nj \lm_0 - y_{n-j}^0 z - y_j^0 \lm_0 w + \nj q^0zw \neq 0, \text{ for all } z,w \in \D \q & \text{if } \; |y_j^0|\leq |y_{n-j}^0| \, .
    \end{cases}\]

    \item[$(8)$]
    For each $j =1 , \dots,\left[\frac{n}{2}\right] $ the following hold
    \[
    \begin{cases}
    |y_j^0|^2 - |\lm_0|^2|y_{n-j}^0|^2 + {n \choose j}^2|q^0|^2 - {n \choose j}^2|\lm_0| ^2 + 2{n \choose j}\left||\lm_0|^2 y_{n-j}^0 - \bar y_j^0 q^0 \right| \leq 0  \;  & \text{if } \; |y_{n-j}^0|\leq |y_j^0| \\
    |y_{n-j}^0|^2 - |\lm_0|^2|y_j^0|^2 + {n \choose j}^2|q^0|^2 - {n \choose j}^2|\lm_0| ^2 + 2{n \choose j}\left||\lm_0|^2 y_j^0 - \bar y_{n-j}^0 q^0 \right| \leq 0 \; & \text{if } \; |y_j^0|\leq |y_{n-j}^0| \, .
    \end{cases}\]

    \item[$(9)$]
    $|q^0| \leq |\lm_0|$ and for each $j =1 , \dots,\left[\frac{n}{2}\right] $ the following hold
    \[
    \begin{cases}
    |y_j^0|^2 + |\lm_0|^2 |y_{n-j}^0|^2 - \nj^2 |q^0|^2 + 2 |\lm_0|\left| y_j^0 y_{n-j}^0 - \nj^2 q^0 \right| \leq \nj^2 |\lm_0|^2  \;  & \text{if } \; |y_{n-j}^0|\leq |y_j^0| \\
    |y_{n-j}^0|^2 + |\lm_0|^2 |y_j^0|^2 - \nj^2 |q^0|^2 + 2 |\lm_0|\left| y_j^0 y_{n-j}^0 - \nj^2 q^0 \right| \leq \nj^2 |\lm_0|^2  \; & \text{if } \; |y_j^0|\leq |y_{n-j}^0| \, .
    \end{cases}\]

    \item[$(10)$]
    For each $j =1 , \dots,\left[\frac{n}{2}\right] $ the following hold
    \[
    \begin{cases}
    \big||\lm_0|^2y_{n-j}^0 - \bar y_j^0 q^0 \big| +|\lm_0| \left| y_j^0 - \bar y_{n-j}^0 q^0 \right| + \nj |q^0|^2 \leq \nj |\lm_0|^2  \;  & \text{if } \; |y_{n-j}^0|\leq |y_j^0| \\
    \big||\lm_0|^2y_j^0 - \bar y_{n-j}^0 q^0 \big| +|\lm_0| \left| y_{n-j}^0 - \bar y_j^0 q^0 \right| + \nj |q^0|^2 \leq \nj |\lm_0|^2 \; & \text{if } \; |y_j^0|\leq |y_{n-j}^0| \, .
    \end{cases}\]

    \item[$(11)$]
    If $n$ is odd number, then $|q| \leq |\lm_0|$ and there exist $(\be_1,\dots,\be_{n-1}) \in \C^{n-1}$ such that for each $j =1 , \dots,\left[\frac{n}{2}\right] $, $|\be_j|+ |\be_{n-j}| \leq \nj$ and the following hold
    \[
    \begin{cases}
    y_j = \be_j \lm_0 + \bar\be_{n-j} q \q \text{ and } \q  y_{n-j} \lm_0 = \be_{n-j}\lm_0 + \bar\be_j q  \q & \text{if } \; |y_{n-j}^0|\leq |y_j^0| \\
    y_j \lm_0 = \be_j\lm_0 + \bar\be_{n-j} q \q\text{ and }\q y_{n-j} = \be_{n-j} \lm_0 + \bar\be_j q \q & \text{if } \; |y_j^0|< |y_{n-j}^0|
    \end{cases}\]
    and if $n$ is even number, then $|q| \leq |\lm_0|$ and there exist $(\be_1,\dots,\be_n) \in \C^n$ such that for each $j =1 , \dots,\frac{n}{2} $, $|\be_j|+ |\be_{n+1-j}| \leq \nj$  and the following hold
    \[
    \begin{cases}
    y_j = \be_j \lm_0 + \bar\be_{n+1-j} q \q \text{ and } \q y_{n-j} \lm_0 = \be_{n+1-j}\lm_0 + \bar\be_j q  \q & \text{if } \; |y_{n-j}^0|\leq |y_j^0| \\
    y_j \lm_0 = \be_j\lm_0 + \bar\be_{n+1-j} q \q\text{ and }\q y_{n-j} = \be_{n+1-j} \lm_0 + \bar\be_j q \q & \text{if } \; |y_j^0|\leq |y_{n-j}^0| \, .
    \end{cases}\]
Furthermore, if $ y^0 \in \mathcal J_n $ all the conditions $(1)-(11)$ are equivalent. So in particular the converse holds for $n=1,2,3$.
\end{thm}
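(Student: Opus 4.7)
The plan is to reduce everything to the geometric characterisations of $\Gamn$ collected in Theorem \ref{char G 3}, combined with a classical Schwarz-lemma argument on the M\"{o}bius pullbacks $\Phi_j(z,\cdot)$, and to invoke Lemmas \ref{lemma-3.1}--\ref{lemma-3.2} for the converse direction when $y^0\in\mathcal J_n$.

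For the implications $(1)\Rightarrow(2),(3)$, I would fix $z\in\overline\D$ and $j\in\{1,\dots,[n/2]\}$ and observe that $g_{z,j}(\lambda):=\Phi_j(z,\vp(\lambda))$ is a Schur function on $\D$ (bounded by $1$ by Theorem \ref{char G 3}(3) since $\vp(\lambda)\in\Gn$) that vanishes at $\lambda=0$ because $\vp(0)=0$. Schwarz's lemma then gives $|g_{z,j}(\lm_0)|\leq|\lm_0|$, and taking the supremum in $z$ yields $\n\Phi_j(\cdot,y^0)\n_{H^\infty}\leq|\lm_0|$. The parallel argument with $\Phi_{n-j}$ produces the twin estimate of $(3)$; the case-selection there corresponds to formula \eqref{formula-D}, whichever of $|y_j^0|,|y_{n-j}^0|$ is the smaller yielding the larger denominator and hence the sharper bound. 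The implication $(1)\Rightarrow(5)$ is obtained by applying Theorem \ref{char G 3}(8) pointwise to $\vp(\lambda)$, with the vanishing of the diagonal entries and the determinant of $F_j(0)$ forced by $\vp(0)=0$.

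The equivalences $(3)\Lra(4)\Lra(6)\Lra\cdots\Lra(11)$ hinge on the following dilation trick. Given $(y^0,\lm_0)$, I form the auxiliary point $\tilde y$ (odd $n$), or $\hat y$ (even $n$), as prescribed in $(4)$. A direct computation from \eqref{defn-p} then yields
\[
\Phi_j(z,\tilde y)\;=\;\lm_0^{-1}\Phi_j(z,y^0) \q \text{or} \q \Phi_{n-j}(z,\tilde y)\;=\;\lm_0^{-1}\Phi_{n-j}(z,y^0),
\]
according to which coordinate was scaled. Therefore $\n\Phi_j(\cdot,y^0)\n_{H^\infty}\leq|\lm_0|$ is equivalent to $\n\Phi_j(\cdot,\tilde y)\n_{H^\infty}\leq 1$, which by Theorem \ref{char G 3}(3) says $\tilde y\in\Gamn$; this is $(3)\Lra(4)$. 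Each of $(6)$--$(11)$ is then the translation, through the same scaling, of one of the equivalent descriptions of membership in $\Gamn$ supplied by Theorem \ref{char G 3}: its parts $(4),(2),(5),(6),(7)$ and the defining $\beta$-decomposition give $(6),(7),(8),(9),(10),(11)$ respectively.

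Finally, for the converse when $y^0\in\mathcal J_n$, the key observation is that the constraints defining $\mathcal J_n$ force the matrices $B_1,\dots,B_{[n/2]}$ of Theorem \ref{char G 3}(8) to share a common diagonal and determinant, so a single $2\times 2$ contraction $B$ with $\det B=\tilde q$ can play the role of all of them. Assuming $(4)$, I obtain such a $B$ with $\n B\n\leq 1$, then invoke Lemma \ref{lemma-3.2} to produce a Schur function $F:\D\to\C^{2\times 2}$ with $F(0)=\begin{bmatrix}0&\ast\\0&0\end{bmatrix}$ and $F(\lm_0)=B$; setting $\vp(\lambda):=\pi_{2k+1}(F(\lambda),\dots,F(\lambda))$ for odd $n=2k+1$ (and the analogue with $\pi_{2k}$ for even $n=2k$) produces an analytic map $\vp:\D\to\Gamn$ with $\vp(0)=0$ and $\vp(\lm_0)=y^0$, which Lemma \ref{lemma-3.3} then promotes to $\vp(\D)\subset\Gn$. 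The main obstacle I anticipate is verifying the hypothesis $\det\mathcal K_B(|\lm_0|)\leq 0$ required by Lemma \ref{lemma-3.1}: this amounts to expanding the $2\times 2$ determinant from \eqref{M-rho} in terms of the entries of $B$ (hence of $y^0$ and $q^0$) and recognising the resulting inequality as one of the scalar conditions $(6)$--$(10)$. Once this bookkeeping is carried out, the converse for $n=1,2,3$ follows immediately from the observation that $\mathcal J_n=\Gn$ in those cases, since the defining constraints on $\mathcal J_n$ are vacuous when $[n/2]\leq 1$.
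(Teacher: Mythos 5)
Your overall architecture matches the paper's: the classical Schwarz lemma applied to $g_z^j(\lm)=\Phi_j(z,\vp(\lm))$ for $(1)\Rightarrow(2)\Rightarrow(3)$, the scaling $\tilde q=q^0/\lm_0$, $\tilde y_j=y_j^0/\lm_0$ giving $\Phi_j(z,\tilde y)=\lm_0^{-1}\Phi_j(z,y^0)$ for $(3)\Leftrightarrow(4)$ and the translation of $(6)$--$(11)$ through Theorem \ref{char G 3}, and the map $\vp=\pi_{2k+1}(F_1,\dots,F_1)$ for the converse on $\mathcal J_n$. However, there are two genuine gaps. First, your proof of $(1)\Rightarrow(5)$ by ``applying Theorem \ref{char G 3}(8) pointwise to $\vp(\lm)$'' does not work: a pointwise choice of contractions $B_j(\lm)$ need not assemble into an analytic Schur-class function, and even viewed as a two-point problem (prescribing $F_j(0)$ of the stated triangular form and $F_j(\lm_0)=B_j$) the existence of an interpolating Schur function is precisely the nontrivial content, not a consequence of membership in $\Gn$ at each point. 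The paper instead proves $(3)\Rightarrow(5)$, and this is where almost all of the work lies: one forms the matrix $Z_j$ of \eqref{Z} with $w_j^2=\bigl(y_j^0y_{n-j}^0-\nj^2q^0\bigr)/\nj^2\lm_0$, reduces to finding $G_j\in S_{2\times 2}$ with $[G_j(0)]_{22}=0$ and $G_j(\lm_0)=Z_j$, and then must verify the two hypotheses of Lemma \ref{lemma-3.2}, namely $\|Z_j\|<1$ and $\det\mathcal K_{Z_j}(|\lm_0|)\le 0$.

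Second, you defer exactly this verification as ``bookkeeping,'' but it is the technical heart of the theorem: the paper devotes Lemmas \ref{lemma-3.5} and \ref{lemma-3.6} (proved in Section 6) to showing $\|Z_j\|\le 1$ and to the factorization $\det\bigl(\mathcal K_{Z_j}(|\lm_0|)\det(I-Z_j^*Z_j)\bigr)=-(k-k_j)(k-k_{n-j})$, with the sign controlled by the inequalities $X_j,X_{n-j}\ge 2$ extracted from condition $(3)$ via parts $(3)$, $(3)'$ and $(5)$ of Theorem \ref{char G 3}; this is not a routine expansion and the case split on $|y_{n-j}^0|\le|y_j^0|$ is used essentially there. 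Moreover two cases you do not anticipate require separate treatment: the degenerate case $y_j^0y_{n-j}^0=\nj^2q^0$ (handled by a diagonal $F_j$ built from two scalar Schwarz functions), and the boundary case $\|\Phi_j(\cdot,y^0)\|_{H^\infty}=|\lm_0|$, where $\|Z_j\|=1$ and Lemma \ref{lemma-3.2} does not apply directly; the paper perturbs $\lm_0$ to $\lm_\epsilon=\lm_0(1+\epsilon)^2$, solves the strict problem for each $\epsilon$, and passes to a limit by Montel's theorem. Without these three ingredients the implication into $(5)$ --- and hence the converse $(5)\Rightarrow(1)$ on $\mathcal J_n$, which consumes $(5)$ as input --- is not established. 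The remaining parts of your outline (the scaling equivalences and the final assembly via $\pi_{2k}$, $\pi_{2k+1}$ and Lemma \ref{lemma-3.3}) are sound and agree with the paper.
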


\begin{proof}
    We show the following:
    \[
    \begin{array}[c]{ccccc}
    (1)&\imp&(2)&\imp&(3)\\
    &  & & &\Downarrow  \\
    &  & & & (5)
    \end{array}
    \q \text{ and } \q
    \begin{array}[c]{ccccc}
    (4)&\Lra&(3)&\Lra&(8)\\
    &  & \Updownarrow & & \\
    &  & (6)          & &
    \end{array}
    \q \text{ and } \q
    \begin{array}[c]{ccccc}
    &  & (9)          & & \\
    &  & \Updownarrow & & \\
    (7)&\Lra&(4)&\Lra&(10)\\
    &  & \Updownarrow & & \\
    &  & (11)          & &
    \end{array}
    \]
    \noindent $(1) \imp (2)$ :
    Suppose $\vp  :  \D \rightarrow  \Gn $ is an analytic map such that
    $\; \vp(0) = (0,\dots,0) $ and $\; \vp(\lm_0) = y^0 $. For $z \in \mathbb{D}$ and $ j \in \ls 1, \dots,n-1 \rs $, we define a map $g_z^j:\D \rightarrow \C$ by $g_z^j(\lm)= \Phi_j(z, \vp(\lm))$. Since $\vp(\lm) \in \Gn$, by Theorem \ref{char G 3}, $\n \Phi_j(.,\vp(\lm)) \n_{H^{\infty}} <1$. Thus, for any $z \in \mathbb{D}$ and for any $ j \in \ls 1, \dots,n-1 \rs $ the map  $g_z^j$ is an analytic self-map of $\D$. Also we have
    $ g_z^j(0) = \Phi_j\lf z,(0,\dots,0)\rf = 0. $
    Hence by classical Schwarz lemma (in one variable) we have that
    \[ | \Phi_j(z,y^0)| = |\Phi_j(z,\vp(\lm_0))| = |g_z^j(\lm_0)| \leq |\lm_0|.\]
    Since this is true for all $z \in \mathbb{D}$ and for any $ j \in \ls 1, \dots,n-1 \rs $, we have
    \[
    \n \Phi_j(.,y^0)\n_{H^{\infty}} \leq |\lm_0|, \q \q 1\leq j \leq n-1.
    \]
    Therefore,
    \[
    \max_{1\leq j \leq n-1} \ls \n \Phi_j(.,y^0)\n_{H^{\infty}} \rs \leq |\lm_0|
    \]
    and consequently $(2)$ holds.\\

    \noindent $(2) \imp (3)$: This is obvious.\\

    \noindent $(3) \imp (4)$:
    Suppose $(3)$ holds. Let
    \[ A = \ls j \in \ls 1, \dots,\lt \frac{n}{2} \rt \rs : |y_{n-j}^0| \leq |y_j^0| \rs \q \text{and} \q B =  j \in \ls 1, \dots,\lt \frac{n}{2} \rt \rs \setminus A  . \]
    If $j \in A$, then
    \begin{align}\label{phi A}
        \nonumber  & \left| \dfrac{\nj q^0 z - y_j^0}{y_{n-j}^0 z - \nj} \right| = \left| \Phi_j(z,y^0) \right| \leq |\lambda_0|  \q \text{for all } z\in \D \\
        & i.e., \left| \dfrac{\nj \dfrac{q^0}{\lambda_0} z - \dfrac{y_j^0}{\lambda_0}}{y_{n-j}^0 z - \nj} \right| \leq 1  \q \text{for all } z\in \D.
    \end{align}
    If $j \in B$, then $\left| \Phi_{n-j}(z,y^0) \right| \leq |\lambda_0|$ for all $z\in \D$ and similarly
    \begin{equation} \label{phi B}
        \left| \dfrac{\nj \dfrac{q^0}{\lambda_0} z - \dfrac{y_{n-j}^0}{\lambda_0}}{y_j^0 z - \nj} \right| \leq 1  \q \text{for all } z\in \D .
    \end{equation}
    \item[Case 1]: Let $n$ be odd positive integer. Let $\tilde y = \lf \tilde y_1, \dots, \tilde y_{n-1}, \tilde q \rf $,
where $\tilde q = \dfrac{q^0}{\lambda_0}$ and for $j = 1, \dots,\lt \frac{n}{2}\rt$,
    \[
    \tilde y_j =
    \begin{cases}
    \dfrac{y_j^0}{\lambda_0} & \text{if } j\in A\\
    \\
    y_j^0 & \text{if } j\in B
    \end{cases}
    \q \q \text{ and } \q \q
    \tilde y_{n-j}=
    \begin{cases}
    y_{n-j}^0 & \text{if } j\in A\\
    \\
    \dfrac{y_{n-j}^0}{\lambda_0} & \text{if } j\in B .
    \end{cases}
    \]
    Clearly $\tilde y \in \mathcal F_1(y^0)$. From equations $\eqref{phi A}$ and $\eqref{phi B}$, we have
    \begingroup
    \allowdisplaybreaks
    \begin{align*}
        &\n \Phi_j(.,\tilde y) \n_{H^{\infty}}= \left| \dfrac{\nj \dfrac{q^0}{\lambda_0} z - \dfrac{y_j^0}{\lambda_0}}{y_{n-j}^0 z - \nj} \right| \leq 1, \; \text{ if } j\in A \\
        \text{and} \qq &\n \Phi_{n-j}(.,\tilde y) \n_{H^{\infty}}= \left| \dfrac{\nj \dfrac{q^0}{\lambda_0} z - \dfrac{y_{n-j}^0}{\lambda_0}}{y_j^0 z - \nj} \right| \leq 1, \; \text{ if } j\in B.
    \end{align*}
    \endgroup
    If $j \in A$, then $|\tilde y_{n-j}| = |y_{n-j}^0| \leq \nj$ and if $j \in B$, then $|\tilde y_j| = |y_j^0| \leq \nj$. Therefore, by Theorem $\ref{char G 3}$, we conclude that $\tilde y \in \Gamn$.\\

    \item[Case 2]: Let $n$ be even positive integer. Consider the point
\[
 \hat y = \lf \dfrac{n+1}{n} \tilde y_1, \dots, \dfrac{n+1}{\frac{n}{2}+1}\tilde y_{\frac{n}{2}}, \dfrac{n+1}{\frac{n}{2}+1} y_{\frac{n}{2}}^0, \dfrac{n+1}{\frac{n}{2}+2} \tilde y_{\frac{n}{2}+1}, \dots, \dfrac{n+1}{n} \tilde y_{n-1}, \tilde q \rf\,,
\]
where $\tilde q = \dfrac{q^0}{\lambda_0}$, $\tilde y_{\frac{n}{2}}= \dfrac{y_{\frac{n}{2}}^0}{\lambda_0}$ and for $j = 1, \dots, \frac{n}{2} -1$,
    \[
    \tilde y_j =
    \begin{cases}
    \dfrac{y_j^0}{\lambda_0} & \text{if } j\in A \setminus \ls\frac{n}{2}\rs \\
    \\
    y_j^0 & \text{if } j\in B \setminus \ls\frac{n}{2}\rs
    \end{cases}
    \q \q \text{ and } \q \q
    \tilde y_{n-j}=
    \begin{cases}
    y_{n-j}^0 & \text{if } j\in A \setminus \ls\frac{n}{2}\rs\\
    \\
    \dfrac{y_{n-j}^0}{\lambda_0} & \text{if } j\in B \setminus \ls\frac{n}{2}\rs .
    \end{cases}
    \]
    Clearly $\hat y \in \mathcal F_2(y^0)$. Since $n$ is even, $\lt \frac{n}{2}\rt = \frac{n}{2}= \lt \frac{n+1}{2}\rt$ and so $n - \lt \frac{n}{2}\rt = \frac{n}{2}$. Also note that when $n$ is even, $y_{\frac{n}{2}}^0 = y_{(n - \frac{n}{2})}^0$ and hence $j = \frac{n}{2} \in A$. Then from $\eqref{phi A}$ we have
    \begingroup
    \allowdisplaybreaks
    \begin{align*}   
     & \left| \dfrac{{n+1 \choose \frac{n}{2}} \tilde q z - \hat y_{\frac{n}{2}}}{\hat y_{(\frac{n}{2}+1)} z - {n+1 \choose \frac{n}{2}}} \right| = 
    \left| \dfrac{{n \choose \frac{n}{2}} \dfrac{q^0}{\lambda_0} z - \dfrac{y_{\frac{n}{2}}^0}{\lambda_0}}{y_{\frac{n}{2}}^0 z - {n \choose \frac{n}{2}}} \right| \leq 1  \q \text{for all } z\in \D\\
    & \Lra  \left| \Phi_{\frac{n}{2}}(z,\hat y) \right| = \left| \dfrac{{n+1 \choose \frac{n}{2}} \tilde q z - \hat y_{\frac{n}{2}}}{\hat y_{(n+1-\frac{n}{2})} z - {n+1 \choose \frac{n}{2}}} \right| \leq 1  \q \text{for all } z\in \D.
    \end{align*}
    \endgroup
    Hence $\n \Phi_{\lt \frac{n+1}{2} \rt}(.,\hat y) \n_{H^{\infty}} =\n \Phi_{\frac{n}{2}}(.,\hat y) \n_{H^{\infty}} \leq 1$. Again for $j\in A \setminus \ls \frac{n}{2} \rs$, from the equations $\eqref{phi A}$, we have
    \begingroup
    \allowdisplaybreaks
    \begin{align*}
     \left| \Phi_j(z,\hat y) \right| = \left| \dfrac{{n+1 \choose j} \tilde q z - \hat y_j}{\hat y_{(n+1-j)} z - {n+1 \choose j}} \right| = \left| \dfrac{\nj \dfrac{q^0}{\lambda_0} z - \dfrac{y_j^0}{\lambda_0}}{y_{n-j}^0 z - \nj} \right| \leq 1 \q \text{for all } z\in \D.
    \end{align*}
    
    \endgroup
    Therefore $\n \Phi_j(.,\hat y) \n_{H^{\infty}} \leq 1$. Similarly for $j\in B $, from equation $\eqref{phi B}$, we have
    \begingroup
    \allowdisplaybreaks
    \begin{align*}
     \left| \Phi_{n+1-j}(.,\hat y) \right| = \left| \dfrac{{n+1 \choose j} \tilde q z - \hat y_{(n+1-j)}}{\hat y_j z - {n+1 \choose j}} \right| = \left| \dfrac{\nj \dfrac{q^0}{\lambda_0} z - \dfrac{y_{n-j}^0}{\lambda_0}}{y_j^0 z - \nj} \right| \leq 1  \q \text{for all } z\in \D.
    \end{align*}
    \endgroup
    Therefore $\n \Phi_{n+1-j}(.,\hat y) \n_{H^{\infty}} \leq 1$. Note that $\frac{n}{2} \notin B$ and $A \cup B = \ls 1, \dots,\lt \frac{n+1}{2} \rt \rs$. Hence $\n \Phi_j(.,\hat y) \n_{H^{\infty}} \leq 1$, if $j\in A$ and $\n \Phi_{n+1-j}(.,\hat y) \n_{H^{\infty}} \leq 1$, if $j\in B$. Therefore, by the equivalence of conditions $(1)$ and $(3)$ of Theorem $\ref{char G 3}$, we have $\hat y \in \widetilde{\Gamma}_{n+1}$.
    Hence $(4)$ holds.\\

    \noindent $(4) \imp (3)$:
    Suppose $(4)$ holds. Again we consider two cases.
    \item[Case 1]:
    Let $n$ be odd positive integer. Let $ \tilde y = \lf \tilde y_1, \dots, \tilde y_{n-1}, \tilde q \rf \in \mathcal F_1(y^0)$,
    where $\tilde y_j = \dfrac{y_j^0}{\lambda_0}$ for $j \in A$ and $\tilde y_{n-j}= \dfrac{y_{n-j}^0}{\lambda_0}$ for $j \in B$.
    Then $\tilde y \in \Gamn$.
    By Theorem $\ref{char G 3}$, we have $\n \Phi_k(.,\tilde y) \n_{H^{\infty}} \leq 1$ for all $k\in A$ and $\n \Phi_{n-k}(.,\tilde y) \n_{H^{\infty}} \leq 1$ for all $k\in B$.
    Then for $j \in A$, we have
    \begingroup
    \allowdisplaybreaks
    \begin{align*}
        \left| \Phi_j(z,\tilde y) \right|  =&\left| \dfrac{\nj \dfrac{q^0}{\lambda_0} z - \dfrac{y_j^0}{\lambda_0}}{y_{n-j}^0 z - \nj} \right| \leq 1  \q \text{for all } z\in \D\\
        \imp & \left| \Phi_j(z,y^0) \right| = \left| \dfrac{\nj q^0 z - y_j^0}{y_{n-j}^0 z - \nj} \right| \leq |\lambda_0|  \q \text{for all } z\in \D.
    \end{align*}
    \endgroup
    Therefore $\n \Phi_j(.,y^0) \n_{H^{\infty}} \leq |\lambda_0|$. Similarly, for $j \in B$, $ \left| \Phi_{n-j}(z,\tilde y) \right| \leq 1 $ for all $z\in \D$ implies that $ \n \Phi_{n-j}(.,y^0) \n_{H^{\infty}} \leq |\lambda_0|$. Therefore, for all $j \in \ls 1, \dots,\lt \frac{n}{2} \rt \rs$
    \[
    \begin{cases}
    \n \Phi_j(.,y^0) \n_{H^{\infty}} \leq |\lm_0|  \q & \text{if } \; |y_{n-j}^0|\leq |y_j^0| \\
    \n \Phi_{n-j}(.,y^0) \n_{H^{\infty}} \leq |\lm_0| \q & \text{if } \; |y_j^0|\leq |y_{n-j}^0| \, .
    \end{cases}\]
    \item[Case 2]:
    Let $n$ be even and let
    \[
    \hat y = \lf \dfrac{n+1}{n} \tilde y_1, \dots, \dfrac{n+1}{\frac{n}{2}+1}\tilde y_{\frac{n}{2}}, \dfrac{n+1}{\frac{n}{2}+1} y_{\frac{n}{2}}^0, \dfrac{n+1}{\frac{n}{2}+2} \tilde y_{\frac{n}{2}+1}, \dots, \dfrac{n+1}{n} \tilde y_{n-1}, \tilde q \rf \in \mathcal F_2(y^0)\;,
    \]
    where $\tilde y_j = \dfrac{y_j^0}{\lambda_0}$ for $j \in A$ and $\tilde y_{n-j}= \dfrac{y_{n-j}^0}{\lambda_0}$ for $j \in B$.
    Then $\hat y \in \widetilde{\Gamma}_{n+1}$.
    Note that $\lt \frac{n+1}{2} \rt= \frac{n}{2}$ and $\frac{n}{2} \in A.$ Using Theorem $\ref{char G 3}$, we have $\n \Phi_k(.,\tilde y_k) \n_{H^{\infty}} \leq 1$ for all $k \in A$ and $\n \Phi_{n-k}(.,\hat y) \n_{H^{\infty}} \leq 1$ for all $k \in B$. Then
  \begin{align*}
        \n \Phi_{\lt \frac{n+1}{2} \rt}(.,\hat y) \n_{H^{\infty}} =& \n \Phi_{\frac{n}{2}}(.,\hat y) \n_{H^{\infty}} \leq 1 \\
        \Lra & \left| \Phi_{\frac{n}{2}}(z,y^0) \right| = \left| \dfrac{{n \choose \frac{n}{2}} q^0 z - y_{\frac{n}{2}}^0}{y_{\frac{n}{2}}^0 z - {n \choose \frac{n}{2}}} \right| \leq |\lambda_0|  \q \text{for all } z\in \D.
    \end{align*}
   That is, $ \n \Phi_{\frac{n}{2}}(.,y^0) \n_{H^{\infty}} \leq  |\lambda_0|$. For any $j \in A \setminus \{\frac{n}{2}\}$, we have
    \begin{align*}
        \left| \Phi_j(z,\hat y) \right|  = & \left| \dfrac{{n+1 \choose j} \tilde q z - \hat y_j}{\hat y_{n+1-j} z - {n+1 \choose j}} \right| \leq 1  \q \text{for all } z\in \D\\
        \imp & \left| \Phi_j(z,y^0) \right| =\left| \dfrac{\nj q^0 z - y_j^0}{y_{n-j}^0 z - \nj} \right| \leq |\lambda_0|  \q \text{for all } z\in \D.
    \end{align*}
    That is, $ \n \Phi_j(.,y^0) \n_{H^{\infty}} \leq |\lambda_0|$. Note that
    $B = \ls j \in \ls 1, \dots,\frac{n}{2} -1 \rs : \tilde y_{n-j} = \dfrac{y_{n-j}^0}{\lambda_0} \rs$. Similarly, for $j \in B$,
   \[
    \left| \Phi_{n+1-j}(z,\hat y) \right| = \left| \dfrac{{n+1 \choose j} \tilde q z - \hat y_{n+1-j}}{\hat y_j z - {n+1 \choose j}} \right| \leq 1  \q \text{for all } z\in \D,
    \]
    which implies $ \n \Phi_{n-j}(.,y^0) \n_{H^{\infty}} \leq |\lambda_0|$. Therefore for all $j \in \ls 1, \dots, \frac{n}{2} \rs$
    \[
    \begin{cases}
    \n \Phi_j(.,y^0) \n_{H^{\infty}} \leq |\lm_0|  \q & \text{if } \; j \in A \\
    \n \Phi_{n-j}(.,y^0) \n_{H^{\infty}} \leq |\lm_0| \q & \text{if } \; j \in B \, ,
    \end{cases}\]
    and hence $(3)$ holds.\\
    
    \noindent $(3) \imp (5)$: Suppose  $(3)$ holds. For each $j = 1 , \dots,\left[\frac{n}{2}\right] $, we shall construct a matrix-valued function $F_j$ in the Schur class such that
    $F_j(0) = \begin{bmatrix}
    0 & * \\
    0 & 0
    \end{bmatrix} $
    and $F_j(\lambda_0) = B_j$ where $\det B_j = q^0$, $[B_j]_{11} = \dfrac{y_j^0}{\nj}$ and $[B_j]_{22} = \dfrac{y_{n-j}^0}{\nj}$. Let $j \in \left\{1 , \dots,\left[\frac{n}{2}\right] \right\} $ be arbitrary and let $|y_{n-j}^0| \leq |y_j^0|$. Then by hypothesis, $\n \Phi_j(.,y^0) \n \leq |\lm_0|$.
    First consider the case of $y_j^0 y_{n-j}^0 = \nj^2 q^0$. Then
    \[
     \frac{|y_{n-j}^0|}{\nj} \leq \frac{|y_j|^0}{\nj} =  D_j(y) = \n \Phi_j (.,y^0) \n \leq |\lambda_0| <1.
    \]
    Then by classical Schwarz lemma, there exist analytic maps $f_j,g_j : \D \longrightarrow \D$ such that $f_j(0)=0$, $f_j(\lm_0)=\df{y_j^0}{\nj}$, $g_j(0)=0$ and $g_j(\lm_0)=\df{y_{n-j}^0}{\nj}$. Consider the function
    $$
    F_j(\lm)=
    \begin{bmatrix}
    f_j(\lm) & 0\\
    0 & g_j(\lm)
    \end{bmatrix} . $$
    Then $F_j$ is clearly a Schur function and has the following properties
    $$ F_j(0)=
    \begin{bmatrix}
    0 & 0 \\
    0 & 0
    \end{bmatrix} \q \text{and} \q
     F_j(\lm_0)=
    \begin{bmatrix}
    \df{y_j^0}{\nj} & 0 \\
    \\
    0 & \df{y_{n-j}^0}{\nj}
    \end{bmatrix} = B_j (\text{say}).$$
    Then $\det B_j = \df{y_j^0 y_{n-j}^0}{\nj^2}=q^0$. 
    Therefore the desired Schur function $F_j$ is constructed for the case of $y_j^0 y_{n-j}^0 = \nj q^0$.\\
    
    Now let us consider the case $y_j^0 y_{n-j}^0 \neq \nj q^0$. If we can construct a function $F_j \in S_{2 \times 2}$ such that
    \begin{equation}\label{formulae-F}
        F_j(0)=
        \begin{bmatrix}
            0 & *\\
            0 & 0
        \end{bmatrix}
        \text{  and }
        F_j(\lm_0)=
        \begin{bmatrix}
            \df{y_j^0}{\nj} & w_j \\
            \\
            \lm_0w_j & \df{y_{n-j}^0}{\nj}
        \end{bmatrix}=B_j,
    \end{equation}
    where
    $
    w_j^2 =\df{y_j^0 y_{n-j}^0 - \nj^2 q^0}{ \nj^2 \lm_0}
    $.
    Then $\det B_j=q^0 $. Consequently the issue will be resolved for this case.
    Consider the matrix
    \begin{equation}{\label{Z}}
        Z_j =
        \begin{bmatrix}
            \df{y_j^0}{\nj\lm_0} & w_j \\
            \\
            w_j & \df{y_{n-j}^0}{\nj}
        \end{bmatrix}.
    \end{equation}
    To construct a Schur function $F_j$ satisfying condition $\eqref{formulae-F}$, it is sufficient to find $G_j \in S_{2 \times 2}$ such that $ [G_j(0)]_{22}=0$ and $G_j(\lm_0)=Z_j$. Since the function
    $
    F_j(\lm)=G_j(\lm)\begin{bmatrix}
    \lm & 0\\
    0 & 1
    \end{bmatrix}
    $
    is in Schur class and satisfies
   \[
           F_j(\lm _0)
        =\begin{bmatrix}
            \df{y_j^0}{\nj} & w_j \\
            \\
            \lm_0 w_j & \df{y_{n-j}^0}{\nj}
        \end{bmatrix}
    \q \text{and} \q
    F(0)
    =\begin{bmatrix}
    0 & *\\
    0 & 0
    \end{bmatrix}.
    \]
    Thus our aim is to find a $G_j\in S_{2 \times 2}$ such that $ [G_j(0)]_{22}=0$ and $G_j(\lm_0)=Z_j$. Lemma \ref{lemma-3.2}
    gives an equivalent condition for the existence of such $G_j$, provided that $\n Z_j \n < 1$. We have that
    \begin{align}\label{Dj to 5}
        &\nonumber \n \Phi_j(.,y^0) \n = \dfrac{{n \choose j}\left|y_j^0 - \bar y_{n-j}^0 q^0 \right| + \left|y_j^0 y_{n-j}^0 - {n \choose j}^2 q^0 \right|}{{n \choose j}^2 -|y_{n-j}^0|^2} \leq |\lambda_0| \\
        \imp & {n \choose j}\left|\dfrac{y_j^0}{\lm_0} - \bar y_{n-j}^0 \dfrac{q^0}{\lm_0}\right| + \left|\dfrac{y_j^0}{\lm_0} y_{n-j}^0 - {n \choose j}^2 \dfrac{q^0}{\lm_0} \right| \leq {n \choose j}^2 -|y_{n-j}^0|^2 .
    \end{align}
    
    Then, by the equivalence of conditions $(3)$ and $(5)$ of Theorem \ref{char G 3}, we have
    \begin{equation}\label{equ for Y-1}
        {n \choose j}^2 - \dfrac{|y_j^0|^2}{|\lm_0|^2} - | y_{n-j}^0 |^2 + {n \choose j}^2\dfrac{| q^0 |^2}{|\lm_0|^2} \geq 2\dfrac{\left| y_j^0 y_{n-j}^0 - {n \choose j}^2 q^0 \right| }{|\lm_0|}.
    \end{equation}
    Since $|y_{n-j}^0| \leq |y_j^0|$ and $\lm_0 \in \D$, we have
    \[
    \lf \dfrac{|y_j^0|^2 }{|\lm_0|^2} + |y_{n-j}^0|^2 \rf - \lf |y_j^0|^2 + \dfrac{|y_{n-j}^0|^2}{|\lm_0|^2} \rf = \lf \dfrac{1}{|\lm_0|^2} - 1 \rf \lf |y_j^0|^2 - |y_{n-j}^0|^2 \rf \geq 0.
    \]
    Hence
    \begingroup
    \allowdisplaybreaks
    \begin{align}\label{equ for Y-2}
        \nonumber {n \choose j}^2 - |y_j^0|^2 - \dfrac{| y_{n-j}^0 |^2}{|\lm_0|^2} + {n \choose j}^2\dfrac{| q^0 |^2}{|\lm_0|^2}
        \geq & {n \choose j}^2 - \dfrac{|y_j^0|^2}{|\lm_0|^2} - | y_{n-j}^0 |^2 + {n \choose j}^2\dfrac{| q^0 |^2}{|\lm_0|^2} \\
        \geq & 2\dfrac{\left| y_j^0 y_{n-j}^0 - {n \choose j}^2 q^0 \right| }{|\lm_0|}.
    \end{align}
    \endgroup
    For a $j \in \left\{1 , \dots,\left[\frac{n}{2}\right] \right\} $, let
    \begin{equation}{\label{Y-1}}
        X_j := \frac{|\lm_0|}{|y_j^0 y_{n-j}^0 - \nj^2 q^0|}\left( \nj^2 - |y_j^0|^2 -\frac{|y_{n-j}^0|^2}{|\lm_0|^2} + \frac{\nj^2 |q^0|^2}{|\lm_0|^2} \right).
    \end{equation}
    and
    \begin{equation}{\label{Y-2}}
        X_{n-j} := \frac{|\lm_0|}{|y_j^0 y_{n-j}^0 - \nj^2 q^0|}\left( \nj^2 - \frac{|y_j^0|^2}{|\lm_0|^2} -
        |y_{n-j}^0|^2  + \frac{\nj^2 |q^0|^2}{|\lm_0|^2} \right)
    \end{equation}
    Then, by inequality $\eqref{equ for Y-1}$ and $\eqref{equ for Y-2}$, we have $X_j \geq 2$ and $X_{n-j} \geq 2$.
    Note that if $\n \Phi_j(.,y) \n < |\lm_0|$, then the inequality in $\eqref{Dj to 5}$ and consequently inequalities $\eqref{equ for Y-1}$ and $\eqref{equ for Y-2}$ will be strict. As a result we have $X_j > 2$ and $X_{n-j} > 2$ whenever $\n \Phi_j(.,y) \n < |\lm_0|$. At this point we need two lemmas to complete the proof. We state the lemmas below and provide their proofs at the end of the paper.\\

    \begin{lemma}\label{lemma-3.5}
        Let $\lm_0 \in \D \setminus \{0\}$ and $y^0 \in \Gn$. Suppose $j \in \left\{1 , \dots,\left[\frac{n}{2}\right] \right\} $ and $y_j ^0 y_{n-j}^0 \neq \nj^2 q^0$. Also suppose $|y_{n-j}^0|\leq|y_j^0|$ and $\n \Phi_j(.,y) \n \leq |\lm_0|$. Let $Z_j$ be defined by equation \eqref{Z}, where $w_j^2 = \df{y_j ^0 y_{n-j}^0 - \nj^2 q^0}{\nj^2 \lm_0}$. Then $\n Z_j \n \leq 1$. Moreover, $\n Z_j \n = 1$ if and only if $\n \Phi_j(.,y) \n = |\lm_0|$.
    \end{lemma}

    \begin{lemma}\label{lemma-3.6}
        Let $\lm_0 \in \D \setminus \{0\}$ and $y^0 \in \Gn$. Suppose $j \in \left\{1 , \dots,\left[\frac{n}{2}\right] \right\} $ and $y_j ^0 y_{n-j}^0 \neq \nj^2 q^0$. Also suppose $|y_{n-j}^0|\leq|y_j^0|$ and $\n \Phi_j(.,y) \n \leq |\lm_0|$. Let $Z_j$ be defined by equation \eqref{Z}, where $w_j^2 = \df{y_j ^0 y_{n-j}^0 - \nj^2 q^0}{\nj^2 \lm_0}$ and corresponding $\mathcal{K}_{Z_j}(\cdot)$ is defined by $\eqref{M-rho}$. Then
        \begingroup
        \allowdisplaybreaks
        \begin{align*}
            & \mathcal{K}_{Z_j}(|\lm_0|)\det(1-Z_j^*Z_j) =\\
            & \nonumber
            \begin{bmatrix}
                1 - \dfrac{|y_j^0|^2}{\nj^2} - \dfrac{|y_{n-j}^0|^2}{\nj^2} + |q^0|^2  {\qq} {\qq}
                & (1-|\lm_0|^2)\left(w_j + \df{q^0}{\lm_0}\bar{w_j} \right) \\
                -\dfrac{|y_j^0 y_{n-j}^0 - \nj^2 q^0|}{\nj^2}\left(|\lm_0|+ \dfrac{1}{|\lm_0|}\right) & \\
                \\
                (1-|\lm_0|^2)\left(\bar{w_j} + \df{\bar{q}^0}{\bar{\lm}_0}w_j \right) &
                -|\lm_0|^2 + \dfrac{|y_j^0|^2}{\nj^2} + \dfrac{|y_{n-j}^0|^2}{\nj^2} - \dfrac{|q^0|^2}{|\lm_0|^2}\\
                &{\q}+ \dfrac{|y_j^0 y_{n-j}^0 - \nj^2 q^0|}{\nj^2}\left( |\lm_0| + \dfrac{1}{|\lm_0|}\right)
            \end{bmatrix}
        \end{align*}
        \endgroup
        and
        \begin{equation*}
            \det\left(\mathcal{K}_{Z_j}(|\lm_0|)\det(1-Z_j^*Z_j) \right) = -(k-k_j)(k-k_{n-j})
        \end{equation*}
        where
        \begingroup
        \allowdisplaybreaks
        \begin{align*}
            k &= 2\frac{|y_j^0 y_{n-j}^0 - \nj^2 q^0|}{\nj^2} \\
            k_j &= |\lm_0|\left( 1 - \dfrac{|y_j^0|^2}{\nj^2} -\dfrac{|y_{n-j}^0|^2}{\nj^2|\lm_0|^2} + \dfrac{|q^0|^2}{|\lm_0|^2} \right) = \dfrac{|y_j^0 y_{n-j}^0 - \nj^2 q^0|}{\nj^2}X_j\\
            k_{n-j} &= |\lm_0|\left( 1 - \dfrac{|y_j^0|^2}{\nj^2|\lm_0|^2} -
            \dfrac{|y_{n-j}^0|^2}{\nj^2}  + \dfrac{|q^0|^2}{|\lm_0|^2} \right) = \dfrac{|y_j^0 y_{n-j}^0 - \nj^2 q^0|}{\nj^2}X_{n-j}.
        \end{align*}
        \endgroup
    \end{lemma}

\vspace{0.7cm}   

\noindent Let us get back to the proof of theorem. First we consider the case $\n \Phi_j(.,y) \n < |\lm_0|$.
    Then by Lemma \ref{lemma-3.5}, we have $\n Z_j \n < 1$ and so $\det (I - Z_j^*Z_j) >0$. Again since $X_j,X_{n-j} > 2$, by Lemma \ref{lemma-3.6}, we have
    $$\det\left(\mathcal{K}_{Z_j}(|\lm_0|)\det(I-Z_j^*Z_j) \right) = -\frac{|y_j^0 y_{n-j}^0 - \nj^2 q^0|^2}{\nj^4}(2 - X_j)(2 - X_{n-j}) < 0. $$
    Then $\det(\mathcal{K}_{Z_j}(|\lm_0|) < 0$. Thus by lemma \ref{lemma-3.2}, there exists
    $G_j \in S_{2 \times 2}$ such that $[G_j(0)]_{22}=0$ and $G_j(\lm_0)=Z_j$. Hence an $F_j$ with required properties is constructed
    and we have $(3) \imp (5)$ in the case  $|y_{n-j}^0|\leq|y_j^0|$. \\

    Now consider the case $\n \Phi_j(.,y) \n =|\lm_0|$. Take $\epsilon > 0$ so that $|\lm_\epsilon| < 1$,
    where $\lm_\epsilon = \lm_0(1+\epsilon)^2$. Then $\n \Phi_j(.,y) \n =|\lm_0|< |\lm_\epsilon| < 1$ and
    $$ \left(\df{w_j}{1+\epsilon}\right)^2 =\frac{y_j^0 y_{n-j}^0 - \nj^2 q^0}{\nj^2 \lm_\epsilon}. $$
    By same reason as described above, for each $\epsilon > 0$
    with $|\lm_\epsilon|< 1$, there exists $F_j^{\epsilon} \in S_{2\times 2}$ such that
    $$ F_j^{\epsilon}(0) = \begin{bmatrix}
    0 & *\\
    0 & 0
    \end{bmatrix} \q \text{and} \q
    F_j^{\epsilon}(\lm_\epsilon)=
    \begin{bmatrix}
    \dfrac{y_j^0}{\nj} & \dfrac{w_j}{1+\epsilon} \\
    \\
    (1+\epsilon)\lm_0w_j & \dfrac{y_{n-j}^0}{\nj}
    \end{bmatrix}.$$
    Since $ \n F_j^{\epsilon}(\lm) \n \leq 1$, the set of functions
    $$\{ F_j^{\epsilon} : \epsilon > 0 \; \text{ and } \; |\lm_\epsilon|< 1 \}$$
    is uniformly bounded on $\D$ and hence on each compact subsets of $\D$. So by Montel's theorem, there exists a subsequence of $\{F_j^{\epsilon} \}$ converging uniformly on each compact subset of $\D$, to an analytic function, say $F_j$, in the Schur class as $\epsilon \rightarrow 0$. Since $\lm_{\epsilon} \rightarrow \lm_0$ as $\epsilon \rightarrow 0$, we have $ F_j^{\epsilon}(\lm_\epsilon) \rightarrow F_j(\lm_0)$ as $\epsilon \rightarrow 0$. 
    Hence
    $$ F_j(0) = \begin{bmatrix}
    0 & *\\
    0 & 0
    \end{bmatrix} \q \text{and} \q
    F_j(\lm_0)=
    \begin{bmatrix}
    \dfrac{y_j^0}{\nj} & w_j \\
    \\
    \lm_0w_j & \dfrac{y_{n-j}^0}{\nj}
    \end{bmatrix}, $$
    where $w_j^2 = \df{y_j^0 y_{n-j}^0 - \nj^2 q^0}{\nj^2 \lm_0}$. This $F_j$ has the required properties.
    Thus we are done for the case $|y_{n-j}^0| \leq |y_j^0|$. The proof for the case $|y_j^0| \leq |y_{n-j}^0|$ is similar. Hence  $(3) \imp (5)$.\\

    \noindent $(3) \Lra (6)$: For all $j = 1 , \dots, n-1$, we have
    \[ \n \Phi_j(.,y^0)\n_{H^{\infty}}= \dfrac{{n \choose j}\left|y_j^0 - \bar y_{n-j}^0 q^0 \right| + \left|y_j^0 y_{n-j}^0 - {n \choose j}^2 q^0 \right|}{{n \choose j}^2 -|y_{n-j}^0|^2}.\]
    Therefore, the equivalence of $(3)$ and $(6)$ holds.\\

    Next we show that condition $(3)$ or condition $(4)$ is equivalent to the remaining conditions $(7)-(11)$. Consider the sets

    \noindent $(4) \Lra (7)$
    Suppose $n$ is odd positive integer and let $\tilde y = \lf \tilde y_1, \dots, \tilde y_{n-1}, \tilde q \rf \in \mathcal F_1(y^0)$, where
    \[
    \begin{cases}
    \tilde y_j = \dfrac{y_j^0}{\lambda_0} \text{ and } \tilde y_{n-j} = y_{n-j}^0 & \text{if } j\in A \\
    \tilde y_j = y_j^0 \text{ and } \tilde y_{n-j} = \dfrac{y_{n-j}^0}{\lambda_0} & \text{if } j \in B
    \end{cases}
    \]
    By hypothesis, $\tilde y \in \Gamn$. Note that $\tilde q = \dfrac{q^0}{\lambda_0}$.
    By condition $(2)$ of Theorem \ref{char G 3}, $\tilde y \in \Gamn$ if and only if for each $j \in \ls 1, \dots, \lt \frac{n}{2}\rt \rs$
    \[
    {n \choose j} - \tilde y_j z - \tilde y_{n-j}w + { n \choose j} \tilde qzw \neq 0 \; \text{ for all } z,w \in \D .
    \]
    Then, $\tilde y \in \Gamn$ if and only if for each $j \in \ls 1, \dots, \lt \frac{n}{2}\rt \rs$
    \[
    \begin{cases}
    {n \choose j}\lambda_0 - y_j^0 z - y_{n-j}^0 \lambda_0 w + { n \choose j} q^0 zw \neq 0,\; \text{ for all } z,w \in \D \q & \text{if } j\in A\\
    {n \choose j}\lambda_0 - y_j^0 \lambda_0 z -  y_{n-j}^0 w + { n \choose j} q^0 zw \neq 0,\; \text{ for all } z,w \in \D \q & \text{if } j \in B ,
    \end{cases}
    \]
    Thus we are done when $n$ is odd. Now suppose $n$ is even positive integer and
    \[
    \hat y =  \lf \dfrac{n+1}{n} \tilde y_1, \dots, \dfrac{n+1}{\frac{n}{2}+1}\tilde y_{\frac{n}{2}}, \dfrac{n+1}{\frac{n}{2}+1} y_{\frac{n}{2}}^0, \dfrac{n+1}{\frac{n}{2}+2} \tilde y_{\frac{n}{2}+1}, \dots, \dfrac{n+1}{n} \tilde y_{n-1}, \tilde q \rf \in \mathcal F_2(y^0) ,
    \]
    where, $\tilde q = \dfrac{q^0}{\lambda_0}$, $\tilde y_{\frac{n}{2}} = \dfrac{y_{\frac{n}{2}}^0}{\lambda_0}$ and for $j \in \ls 1, \dots, \frac{n}{2} -1 \rs$,
    \[
    \begin{cases}
    \tilde y_j = \dfrac{y_j^0}{\lambda_0} \text{ and } \tilde y_{n-j} = y_{n-j}^0 & \text{if } |y_{n-j}^0| \leq |y_j^0| \\
    \tilde y_j = y_j^0 \text{ and } \tilde y_{n-j} = \dfrac{y_{n-j}^0}{\lambda_0} & \text{otherwise}.
    \end{cases}
    \]
    By hypothesis, $\hat y \in \widetilde{\Gamma}_{n+1}$. By condition $(2)$ of Theorem \ref{char G 3}, $\hat y \in \widetilde{\Gamma}_{n+1}$ if and only if for each $j \in \ls 1, \dots, \lt \frac{n+1}{2}\rt \rs$
    \[
    {n+1 \choose j} - \hat y_j z - \hat y_{n+1-j}w + {n+1 \choose j} \widehat qzw \neq 0 \; \text{ for all } z,w \in \D .
    \]
    Since $\hat y_j = \dfrac{(n+1)}{(n+1-j)} \tilde y_j$ and $\hat y_{n+1-j} = \dfrac{(n+1)}{(n+1-j)} \tilde y_{n-j}$ for $j \in \ls 1, \dots, \frac{n}{2} -1 \rs$, $\hat y \in \widetilde{\Gamma}_{n+1}$ if and only if
%
%
    \[  {n \choose \frac{n}{2}}\lambda_0 - y_{\frac{n}{2}}^0 z - y_{\frac{n}{2}}^0 \lambda_0 w + {n \choose \frac{n}{2}} q^0 zw \neq 0,\; \text{ for all } z,w \in \D \]
    and for each $j \in \ls 1, \dots, \frac{n}{2} -1 \rs$ and for all $z,w \in \D$,
    \[
    \begin{cases}
    {n \choose j}\lambda_0 - y_j^0 z - y_{n-j}^0 \lambda_0 w + { n \choose j} q^0 zw \neq 0, \q &\text{if } |y_{n-j}^0| \leq |y_j^0|\\
    {n \choose j}\lambda_0 - y_j^0 \lambda_0 z -  y_{n-j}^0 w + { n \choose j} q^0 zw \neq 0,\q &\text{otherwise}.
    \end{cases}
    \]
    Thus we have $(4) \Lra (7)$.\\

    \noindent $(3) \Lra (8)$:
    By Maximum principle, $\n \Phi_j(.,y^0) \n_{H^{\infty}} \leq |\lambda_0|$ holds if and only if
    $ \left| \Phi_j(z,y^0) \right|\leq |\lambda_0| \text{ for all } z\in \T $.
    Then for all $z \in \T$, we have that
    \begingroup
    \allowdisplaybreaks
    \begin{align*}
        & \dfrac{\left| {n \choose j}q^0 z - y_j^0 \right|}{\left|y_{n-j}^0 z-{n \choose j}\right|}\leq |\lambda_0|  \\
        \Lra& \left| \nj q^0 z - y_j^0 \right|^2 \leq |\lm_0|^2 \left| y_{n-j}^0 z - \nj \right|^2  \\
        \Lra& |y_j^0|^2 - |\lm_0|^2 |y_{n-j}^0|^2 + {\nj}^2|q^0|^2 - {\nj}^2 |\lm_0|^2 + 2 \nj \left| |\lm_0|^2 y_{n-j}^0 - \bar y_j^0 q^0 \right| \leq 0.
    \end{align*}
    \endgroup
 Similarly,
    \begingroup
    \allowdisplaybreaks
    \begin{align*}
        & \n \Phi_{n-j}(.,y^0) \n _{H^\infty} \leq |\lm_0| \q \textrm{for all } z\in\mathbb{T} \\
                \Lra & |y_{n-j}^0|^2 - |\lm_0|^2 |y_j^0|^2 + {\nj}^2 |q^0|^2 - {\nj}^2 |\lm_0|^2 + 2 \nj \left| |\lm_0|^2 y_j^0 - \bar y_{n-j}^0 q^0 \right| \leq 0.
    \end{align*}
    \endgroup
    Therefore, $(3) \Lra (8)$.\\

    \noindent $(4) \Lra (9)$
    Suppose $n$ is odd positive integer and $\tilde y = \lf \tilde y_1, \dots, \tilde y_{n-1}, \tilde q \rf \in \mathcal F_1(y^0)$, where
    \[
    \begin{cases}
    \tilde y_j = \dfrac{y_j^0}{\lambda_0} \text{ and } \tilde y_{n-j} = y_{n-j}^0 & \text{if } j\in A \\
    \tilde y_j = y_j^0 \text{ and } \tilde y_{n-j} = \dfrac{y_{n-j}^0}{\lambda_0} & \text{if } j \in B
    \end{cases}
    \]
    for each $j \in \ls 1, \dots, \left[\frac{n}{2}\right] \rs$. By hypothesis, $\tilde y \in \Gamn$. Note that $\tilde q = \dfrac{q^0}{\lambda_0}$.
    By condition $(6)$ of Theorem \ref{char G 3}, $\tilde y \in \Gamn$ if and only if $|\tilde q| \leq 1$ and for each $j \in \ls 1, \dots, \lt \frac{n}{2}\rt \rs$
    \[
    |\tilde y_j|^2 + |\tilde y_{n-j}|^2 - {n \choose j}^2|\tilde q|^2 + 2\left| \tilde y_j \tilde y_{n-j} - {n \choose j}^2 \tilde q \right| \leq {n \choose j}^2.
    \]
    Then, $\tilde y \in \Gamn$ if and only if $|q^0| \leq |\lambda_0|$ and for each $j \in \ls 1, \dots, \lt \frac{n}{2}\rt \rs$
    \[
    \begin{cases}
    |y_j^0|^2 + |\lm_0|^2 |y_{n-j}^0|^2 - \nj^2 |q^0|^2 + 2 |\lm_0|\left| y_j^0 y_{n-j}^0 - \nj^2 q^0 \right| \leq \nj^2 |\lm_0|^2  & \text{if } j\in A\\
    |y_{n-j}^0|^2 + |\lm_0|^2 |y_j^0|^2 - \nj^2 |q^0|^2 + 2 |\lm_0|\left| y_j^0 y_{n-j}^0 - \nj^2 q^0 \right| \leq \nj^2 |\lm_0|^2 & \text{if } j \in B.
    \end{cases}
    \]
    Thus, $(4) \Lra (9)$ when $n$ is odd.
    Next suppose $n$ is even and
    \[\hat y =  \lf \dfrac{n+1}{n} \tilde y_1, \dots, \dfrac{n+1}{\frac{n}{2}+1}\tilde y_{\frac{n}{2}}, \dfrac{n+1}{\frac{n}{2}+1} y_{\frac{n}{2}}^0, \dfrac{n+1}{\frac{n}{2}+2} \tilde y_{\frac{n}{2}+1}, \dots, \dfrac{n+1}{n} \tilde y_{n-1}, \tilde q \rf \in \mathcal F_2(y^0) , \]
    where, $\tilde q = \dfrac{q^0}{\lambda_0}$, $\tilde y_{\frac{n}{2}} = \dfrac{y_{\frac{n}{2}}^0}{\lambda_0}$ and for $j \in \ls 1, \dots, \frac{n}{2} -1 \rs$,
    \[
    \begin{cases}
    \tilde y_j = \dfrac{y_j^0}{\lambda_0} \text{ and } \tilde y_{n-j} = y_{n-j}^0 & \text{if } |y_{n-j}^0| \leq |y_j^0| \\
    \tilde y_j = y_j^0 \text{ and } \tilde y_{n-j} = \dfrac{y_{n-j}^0}{\lambda_0} & \text{otherwise}.
    \end{cases}
    \]
    By hypothesis, $\hat y \in \widetilde{\Gamma}_{n+1}$. By condition $(6)$ of Theorem \ref{char G 3}, $\hat y \in \widetilde{\Gamma}_{n+1}$ if and only if
    $|\widehat q| \leq 1$ and for each $j \in \ls 1, \dots, \lt \frac{n+1}{2}\rt \rs$
    \[
    |\hat y_j|^2 + |\hat y_{n-j}|^2 - {n+1 \choose j}^2|\widehat q|^2 + 2\left| \hat y_j \hat y_{n-j} - {n+1 \choose j}^2 \widehat q \right| \leq {n+1 \choose j}^2.
    \]
    Note that $\hat y_j = \dfrac{(n+1)}{(n+1-j)} \tilde y_j$ and $\hat y_{n+1-j} = \dfrac{(n+1)}{(n+1-j)} \tilde y_{n-j}$ for $j \in \ls 1, \dots, \frac{n}{2} -1 \rs$. Therefore, $\hat y \in \widetilde{\Gamma}_{n+1}$ if and only if $|q^0| \leq |\lambda_0|$,
    \[
    |y_{\frac{n}{2}}^0|^2 + |\lm_0|^2 |y_{\frac{n}{2}}^0|^2 - {n \choose \frac{n}{2}}^2 |q^0|^2 + 2 |\lm_0|\left| y_{\frac{n}{2}}^0 y_{\frac{n}{2}}^0 - {n \choose \frac{n}{2}}^2 q^0 \right| \leq \nj^2 |\lm_0|^2
    \]
    and for each $j \in \ls 1, \dots, \frac{n}{2}-1 \rs$
    \[
    \begin{cases}
    |y_j^0|^2 + |\lm_0|^2 |y_{n-j}^0|^2 - \nj^2 |q^0|^2 + 2 |\lm_0|\left| y_j^0 y_{n-j}^0 - \nj^2 q^0 \right| \leq \nj^2 |\lm_0|^2 & \text{if } |y_{n-j}^0| \leq |y_j^0| \\
    |y_{n-j}^0|^2 + |\lm_0|^2 |y_j^0|^2 - \nj^2 |q^0|^2 + 2 |\lm_0|\left| y_j^0 y_{n-j}^0 - \nj^2 q^0 \right| \leq \nj^2 |\lm_0|^2 & \text{otherwise }.
    \end{cases}
    \]
    Thus we have $(4) \Lra (9)$.\\

    \noindent $(4) \Lra (10)$
    Suppose $n$ is odd, and $\tilde y = \lf \tilde y_1, \dots, \tilde y_{n-1}, \tilde q \rf \in \mathcal F_1(y^0)$, where
    \[
    \begin{cases}
    \tilde y_j = \dfrac{y_j^0}{\lambda_0} \text{ and } \tilde y_{n-j} = y_{n-j}^0 & \text{if } j\in A \\
    \tilde y_j = y_j^0 \text{ and } \tilde y_{n-j} = \dfrac{y_{n-j}^0}{\lambda_0} & \text{if } j \in B
    \end{cases}
    \]
    for each $j \in \ls 1, \dots, \left[\frac{n}{2}\right] \rs$. By hypothesis, $\tilde y \in \Gamn$. Note that $\tilde q = \dfrac{q^0}{\lambda_0}$.
    By condition $(7)$ of Theorem \ref{char G 3}, $\tilde y \in \Gamn$ if and only if for each $j \in \ls 1, \dots, \lt \frac{n}{2}\rt \rs$ we have
    \[
    \left| \tilde y_{n-j} - \bar{\tilde y}_j \tilde q \right| + \left| \tilde y_j -\bar{\tilde y}_{n-j} \tilde q\right| \leq {n \choose j} (1 - |\tilde q|^2).
    \]
    Then, $\tilde y \in \Gamn$ if and only if for each $j \in \ls 1, \dots, \lt \frac{n}{2}\rt \rs$
    \[
    \begin{cases}
    \left||\lm_0|^2y_{n-j}^0 - \bar y_j^0 q^0 \right| +|\lm_0| \left| y_j^0 - \bar y_{n-j}^0 q \right| + \nj |q^0|^2 \leq \nj |\lm_0|^2 & \text{if } j\in A\\
    \left||\lm_0|^2y_j^0 - \bar y_{n-j}^0 q^0 \right| +|\lm_0| \left| y_{n-j}^0 - \bar y_j^0 q^0 \right| + \nj |q^0|^2 \leq \nj |\lm_0|^2 & \text{if } j \in B.
    \end{cases}
    \]
    Thus, $(4) \Lra (10)$ when $n$ is odd.
    Next suppose $n$ is even and
    \[\hat y =  \lf \dfrac{n+1}{n} \tilde y_1, \dots, \dfrac{n+1}{\frac{n}{2}+1}\tilde y_{\frac{n}{2}}, \dfrac{n+1}{\frac{n}{2}+1} y_{\frac{n}{2}}^0, \dfrac{n+1}{\frac{n}{2}+2} \tilde y_{\frac{n}{2}+1}, \dots, \dfrac{n+1}{n} \tilde y_{n-1}, \tilde q \rf \in \mathcal F_2(y^0) , \]
    where, $\tilde q = \dfrac{q^0}{\lambda_0}$, $\tilde y_{\frac{n}{2}} = \dfrac{y_{\frac{n}{2}}^0}{\lambda_0}$ and for $j \in \ls 1, \dots, \frac{n}{2} -1 \rs$,
    \[
    \begin{cases}
    \tilde y_j = \dfrac{y_j^0}{\lambda_0} \text{ and } \tilde y_{n-j} = y_{n-j}^0 & \text{if } |y_{n-j}^0| \leq |y_j^0| \\
    \tilde y_j = y_j^0 \text{ and } \tilde y_{n-j} = \dfrac{y_{n-j}^0}{\lambda_0} & \text{otherwise}.
    \end{cases}
    \]
    By hypothesis, $\hat y \in \widetilde{\Gamma}_{n+1}$. By condition $(7)$ of Theorem \ref{char G 3}, $\hat y \in \widetilde{\Gamma}_{n+1}$ if and only if for each $j \in \ls 1, \dots, \lt \frac{n+1}{2}\rt \rs$
    \[
    \left| \hat y_{n-j} - \bar{\hat y}_j \widehat q \right| + \left| \hat y_j -\bar{\hat y}_{n-j} \widehat q\right| \leq {n \choose j} (1 - |\widehat q|^2).
    \]
    Since $\hat y_j = \dfrac{(n+1)}{(n+1-j)} \tilde y_j$ and $\hat y_{n+1-j} = \dfrac{(n+1)}{(n+1-j)} \tilde y_{n-j}$ for $j \in \ls 1, \dots, \frac{n}{2} -1 \rs$, therefore, $\hat y \in \widetilde{\Gamma}_{n+1}$ if and only if
    \[
    \left||\lm_0|^2y_{\frac{n}{2}}^0 - \bar y_{\frac{n}{2}}^0 q^0 \right| +|\lm_0| \left| y_{\frac{n}{2}}^0 - \bar y_{\frac{n}{2}}^0 q^0 \right| + \nj |q^0|^2 \leq \nj |\lm_0|^2
    \]
    and for each $j \in \ls 1, \dots, \frac{n}{2}-1 \rs$
    \[
    \begin{cases}
    \left||\lm_0|^2y_{n-j}^0 - \bar y_j^0 q^0 \right| +|\lm_0| \left| y_j^0 - \bar y_{n-j}^0 q^0 \right| + \nj |q^0|^2 \leq \nj |\lm_0|^2
    & \text{if } |y_{n-j}^0| \leq |y_j^0|\\
    \\
    \left||\lm_0|^2y_j^0 - \bar y_{n-j}^0 q^0 \right| +|\lm_0| \left| y_{n-j}^0 - \bar y_j^0 q^0 \right| + \nj |q^0|^2 \leq \nj |\lm_0|^2
    & \text{otherwise}.
    \end{cases}
    \]
    Thus we have $(4) \Lra (10)$.\\

    \noindent $(4) \Lra (11)$:
    Suppose $n$ is odd, and $\tilde y = \lf \tilde y_1, \dots, \tilde y_{n-1}, \tilde q \rf \in \mathcal F_1(y^0)$, where
    \[
    \begin{cases}
    \tilde y_j = \dfrac{y_j^0}{\lambda_0} \text{ and } \tilde y_{n-j} = y_{n-j}^0 & \text{if } j\in A \\
    \tilde y_j = y_j^0 \text{ and } \tilde y_{n-j} = \dfrac{y_{n-j}^0}{\lambda_0} & \text{if } j \in B
    \end{cases}
    \]
    for each $j \in \ls 1, \dots, \left[\frac{n}{2}\right] \rs$. By hypothesis, $\tilde y \in \Gamn$.
    Then by definition of $\Gamn$, $\tilde y \in \Gamn$ if and only if $\left|\dfrac{q^0}{\lambda_0}\right| \leq 1$ and there exist $\lf \be_1,\dots, \be_{n-1} \rf \in \C^{n-1}$ such that for each $j \in \ls 1, \dots, \lt \frac{n}{2}\rt \rs$,
    \[
    |\be_j|+ |\be_{n-j}| \leq \nj \q \text{and} \q \tilde y_j = \be_j + \bar \beta_{n-j} \tilde q , \q \tilde y_{n-j}= \be_{n-j} + \bar \beta_j \tilde q.
    \]
    That is, if and only if $|q^0| \leq |\lambda_0|$ and there exists $\lf \be_1,\dots, \be_{n-1} \rf \in \C^{n-1}$ such that for each $j \in \ls 1, \dots, \lt \frac{n}{2}\rt \rs$,we have $|\be_j|+ |\be_{n-j}| \leq \nj$ and
    \[
    \begin{cases}
    y_j^0 = \lambda_0 \be_j + \bar \be_{n-j} q^0 \q \text{and} \q y_{n-j}^0\lambda_0 = \be_{n-j} \lambda_0 + \bar \be_j q^0
    & \text{if } j\in A \\
    y_j^0 \lm_0 = \lambda_0 \be_j + \bar \be_{n-j} q^0 \q \text{and} \q y_{n-j}^0= \be_{n-j} \lambda_0 + \bar \be_j q^0
    & \text{if } j \in B.
    \end{cases}
    \]
    Thus, $(4) \Lra (11)$ if $n$ is odd.
    Next suppose $n$ is even number and
    \[\hat y =  \lf \dfrac{n+1}{n} \tilde y_1, \dots, \dfrac{n+1}{\frac{n}{2}+1}\tilde y_{\frac{n}{2}}, \dfrac{n+1}{\frac{n}{2}+1} y_{\frac{n}{2}}^0, \dfrac{n+1}{\frac{n}{2}+2} \tilde y_{\frac{n}{2}+1}, \dots, \dfrac{n+1}{n} \tilde y_{n-1}, \tilde q \rf \in \mathcal F_2(y^0) , \]
    where, $\tilde q = \dfrac{q^0}{\lambda_0}$, $\tilde y_{\frac{n}{2}} = \dfrac{y_{\frac{n}{2}}^0}{\lambda_0}$ and for $j \in \ls 1, \dots, \frac{n}{2} -1 \rs$,
    \[
    \begin{cases}
    \tilde y_j = \dfrac{y_j^0}{\lambda_0} \text{ and } \tilde y_{n-j} = y_{n-j}^0 & \text{if } |y_{n-j}^0| \leq |y_j^0| \\
    \tilde y_j = y_j^0 \text{ and } \tilde y_{n-j} = \dfrac{y_{n-j}^0}{\lambda_0} & \text{otherwise}.
    \end{cases}
    \]
    By hypothesis, $\hat y \in \widetilde{\Gamma}_{n+1}$.
    By definition, $\hat y \in \widetilde{\Gamma}_{n+1}$ if and only if $\left|\dfrac{q^0}{\lambda_0}\right| \leq 1$, and  there exist $\lf \gamma_1,\dots, \gamma_n \rf \in \C^n$ such that for each $j \in \ls 1, \dots, \lt \frac{n+1}{2} \rt \rs$,
    \[ |\gamma_j|+ |\gamma_{n+1-j}| \leq {n+1 \choose j} \q \text{and} \q  \hat y_j = \gamma_j + \bar \gamma_{n+1-j} \tilde q , \q \hat y_{n+1-j} = \gamma_{n+1-j} + \bar \gamma_j \tilde q .\]
    Since  $\hat y_j = \dfrac{(n+1)}{(n+1-j)} \tilde y_j$ and $\hat y_{n+1-j} = \dfrac{(n+1)}{(n+1-j)} \tilde y_{n-j}$,
    for $j \in \ls 1, \dots, \frac{n}{2} -1 \rs$, we have
    \[ \dfrac{(n+1)}{(n+1-j)} \tilde y_j = \gamma_j + \bar \gamma_{n+1-j} \tilde q \q \text{and} \q \dfrac{(n+1)}{(n+1-j)} \tilde y_{n-j} = \gamma_{n+1-j} + \bar \gamma_j \tilde q .\]
    Now set
    \[
    \be_j = \dfrac{(n+1-j)}{(n+1)} \gamma_j \q \text{and} \q \be_{n+1-j}= \dfrac{(n+1-j)}{(n+1)} \gamma_{n+1-j}, \q \text{for } j \in \ls 1, \dots, \frac{n}{2} -1 \rs.
    \]
    So, for $j \in \ls 1, \dots, \frac{n}{2} -1 \rs$ we have
    \[
    \tilde y_j = \be_j + \bar \beta_{n+1-j} \tilde q \q \text{and } \q \tilde y_{n-j}= \be_{n+1-j} + \bar \beta_j \tilde q,
    \]
    where $|\be_j|+ |\be_{n+1-j}| \leq \dfrac{(n+1-j)}{(n+1)} {n+1 \choose j} = \nj$. 
    Therefore, $(4)$ holds if and only if $\left|\dfrac{q^0}{\lambda_0}\right| \leq 1$, and there exists $\lf \be_1,\dots, \be_n \rf \in \C^n$ such that for $j \in \ls 1, \dots, \frac{n}{2} -1 \rs$,
    \[
    \begin{cases}
    y_j^0 = \lambda_0 \be_j + \bar \be_{n+1-j} q^0 \q \text{and} \q y_{n-j}^0\lambda_0 = \be_{n+1-j} \lambda_0 + \bar \be_j q^0 & \text{if } |y_{n-j}^0| \leq |y_j^0|\\
    y_j^0 \lm_0 = \lambda_0 \be_j + \bar \be_{n+1-j} q^0 \q \text{and} \q y_{n-j}^0= \be_{n+1-j} \lambda_0 + \bar \be_j q^0 & \text{otherwise}.
    \end{cases}
    \]
    Also we have,
    \[
    \dfrac{n+1}{\frac{n}{2}+1}\tilde y_{\frac{n}{2}} = \gamma_{\frac{n}{2}} + \bar \gamma_{\frac{n}{2}+1} \tilde q \q \text{and} \q \dfrac{n+1}{\frac{n}{2}+1} \tilde y_{\frac{n}{2}*} = \gamma_{\frac{n}{2} +1} + \bar \gamma_{\frac{n}{2}} \tilde q.
    \]
    Set, $\be_{\frac{n}{2}} = \dfrac{\frac{n}{2}+1}{n+1} \gamma_{\frac{n}{2}}$ and $\be_{\frac{n}{2}+1}= \dfrac{\frac{n}{2}+1}{n+1} \gamma_{\frac{n}{2}+1}$. Then
    \[
    \dfrac{y_{\frac{n}{2}}^0}{\lambda_0} = \be_{\frac{n}{2}} + \bar \beta_{\frac{n}{2}+1} \dfrac{q^0}{\lambda_0} \q \text{and } \q y_{\frac{n}{2}}^0= \be_{\frac{n}{2}+1} + \bar \beta_{\frac{n}{2}} \dfrac{q^0}{\lambda_0}.
    \]
    Therefore, $(4) \Lra (11)$. Thus we have proved the following:
    \[
    \begin{array}[c]{ccccc}
    (1)&\imp&(2)&\imp&(3)\\
    &  & & &\Downarrow  \\
    &  & & & (5)
    \end{array}
    \q \text{ and } \q
    \begin{array}[c]{ccccc}
    (4)&\Lra&(3)&\Lra&(8)\\
    &  & \Updownarrow & & \\
    &  & (6)          & &
    \end{array}
    \q \text{ and } \q
    \begin{array}[c]{ccccc}
    &  & (9)          & & \\
    &  & \Updownarrow & & \\
    (7)&\Lra&(4)&\Lra&(10)\\
    &  & \Updownarrow & & \\
    &  & (11)          & &
    \end{array}
    \]
It remains to prove that conditions $(1)-(11)$ are equivalent when $y^0 \in \mathcal J_n $. It suffices to prove $(5) \imp (1)$. Suppose $(5)$ holds, that is, there exist functions $F_1, F_2, \dots F_{\left[\frac{n}{2}\right]}$ in the Schur class such that
    $F_j(0) =
    \begin{bmatrix}
    0 & * \\
    0 & 0
    \end{bmatrix} \:$
    and $\; F_j(\lm_0) = B_j$, for $j = 1, \dots, \left[\frac{n}{2}\right]$, where $\det B_1= \cdots = \det B_{[\frac{n}{2}]}= q^0$,
    $y_j^0 = \nj [B_j]_{11}$ and $y_{n-j}^0 = \nj [B_j]_{22}$. Suppose $n$ is odd. Then $[B_j]_{11} = y_1/n$ and $[B_j]_{22} = y_{n-1}/n$ for $j = 1, \dots, \left[\frac{n}{2}\right]$. Therefore $F_1(\lm_0) = \dots = F_{\left[\frac{n}{2}\right]}(\lm_0)$. Set $\vp(\lm) = \pi_{2[\frac{n}{2}] +1} (\underbrace{ F_1(\lm), \dots, F_1(\lm)}_{[\frac{n}{2}]\text{-times}})$. Then $\vp$ is an analytic function from $\D$ to $\widetilde{\mathbb G}_{2[\frac{n}{2}] +1} = \Gn$ such that $ \vp(0) = (0,\dots,0)$ and $\vp(\lm_0) = y^0 $. If $n$ is even, then $[B_{[\frac{n}{2}]}]_{11} = \dfrac{y_1 + y_{n-1}}{2n} = [B_{[\frac{n}{2}]}]_{22}$, $[B_j]_{11} = y_1/n$ and $[B_j]_{22} = y_{n-1}/n$ for $j = 1, \dots, \left[\frac{n}{2}\right] -1$. Again by setting $\vp(\lm) = \pi_{2[\frac{n}{2}]} (\underbrace{ F_1(\lm), \dots, F_1(\lm) }_{[\frac{n}{2}]\text{-times}})$ we obtain an analytic function from $\D$ to $\widetilde{\mathbb G}_{2[\frac{n}{2}]} = \Gn$ such that $ \vp(0) = (0,\dots,0)$ and $\vp(\lm_0) = y^0 $. It is obvious that $\mathcal J_n = \Gn$ for $n=1,2,3$. Thus the conditions $(1)-(11)$ are all equivalent when $n=1,2,3$. The proof is now complete.
        
\end{proof}

\begin{rem}
For $n=2$, we add to the existing account (of \cite{AY-BLMS} and \cite{NR1}) several different necessary and sufficient conditions each of which ensures the existence of an analytic interpolant from $\D$ to $\mathbb G_2$.
\end{rem}

\section{Non-uniqueness}

\vspace{0.3cm}

\noindent In the previous section we have seen that each of the conditions
$(2)-(11)$ of Theorem \ref{Schwarz Gn} guarantees the existence of an analytic function $\vp:
\D \longrightarrow \Gn$ that maps the origin to the origin and
takes a point $\lm_0$ of $\D$ to a prescribed point $y^0 =
(y_1^0,\dots ,y_{n-1}^0,q^0) \in \Gn$. In this section, we shall show that the
interpolating function $\vp$, when exists, is not unique. We restrict our attention to the case $n=3$. We begin with a lemma which is a straightforward corollary of Theorem \ref{Schwarz Gn}.

\begin{lemma}\label{cor-Schwarz-lemma}
    Let $(y_1,y_2,q) \in \G$ and $|y_2| \leq |y_1|$. Then
    $$ \frac{|3y_2 - 3\bar{y}_1q|+|y_1y_2 - 9q|}{9 - |y_1|^2} \leq \frac{|3y_1 - 3\bar{y}_2q|+|y_1y_2 - 9q|}{9 - |y_2|^2}.$$
\end{lemma}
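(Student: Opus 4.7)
The plan is to reformulate the inequality in terms of the quantities $D_j(y) := \|\Phi_j(\cdot,y)\|_{H^\infty}$ introduced in \eqref{defn-D} and \eqref{formula-D}: by those formulae the assertion is exactly that $D_2(y) \leq D_1(y)$ whenever $(y_1,y_2,q) \in \G$ satisfies $|y_2| \leq |y_1|$. Once phrased this way, the lemma becomes a direct corollary of Theorem \ref{Schwarz Gn}.

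I first dispose of the degenerate case $D_1(y) = 0$. Inspection of \eqref{formula-D} for $n = 3$, $j = 1$ shows that $D_1(y) = 0$ forces $y_1 = 0$ and $q = 0$; the hypothesis $|y_2| \leq |y_1| = 0$ then gives $y = 0$, so $D_2(y) = 0 = D_1(y)$ trivially.

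In the main case $D_1(y) > 0$, I set $\lambda_0 := D_1(y)$. Since $y \in \G$, Theorem \ref{char G 3} yields $D_1(y) < 1$, so $\lambda_0 \in \D \setminus \{0\}$. For $n = 3$ we have $\left[\frac{n}{2}\right] = 1$, and the hypothesis $|y_2| \leq |y_1|$ selects the first branch of condition $(3)$ of Theorem \ref{Schwarz Gn} with $j = 1$ and $y^0 = y$, namely $\|\Phi_1(\cdot,y)\|_{H^\infty} \leq |\lambda_0|$; by the very choice of $\lambda_0$ this holds with equality. Since $\mathcal J_3 = \G$, the last assertion of Theorem \ref{Schwarz Gn} tells us that the conditions $(1)$--$(11)$ are all equivalent at $y^0 = y$, so $(3)$ implies $(1)$: there exists an analytic $\varphi : \D \to \G$ with $\varphi(0) = 0$ and $\varphi(\lambda_0) = y$.

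Finally, applying the implication $(1) \Rightarrow (2)$ of Theorem \ref{Schwarz Gn} to this $\varphi$ gives $\max_{1 \leq j \leq 2}\|\Phi_j(\cdot,y)\|_{H^\infty} \leq |\lambda_0|$, and in particular $D_2(y) \leq |\lambda_0| = D_1(y)$, which is precisely the inequality asserted by the lemma. The only mildly delicate point is matching the correct branch of condition $(3)$ of Theorem \ref{Schwarz Gn} to the hypothesis $|y_2| \leq |y_1|$; no explicit estimates on $|y_1 y_2 - 9q|$, $|y_1 - \bar y_2 q|$, or $|y_2 - \bar y_1 q|$ are required.
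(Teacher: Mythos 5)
Your proposal is correct and follows essentially the same route as the paper: identify the two sides of the inequality with $D_2(y)=\n \Phi_2(.,y)\n_{H^\infty}$ and $D_1(y)=\n \Phi_1(.,y)\n_{H^\infty}$, set $\lm_0 = D_1(y)$, observe that the hypothesis $|y_2|\leq|y_1|$ makes condition $(3)$ of Theorem \ref{Schwarz Gn} hold with equality, and then use the equivalence of the conditions for $n=3$ (since $\mathcal J_3=\G$) to pass to condition $(2)$, which yields $D_2(y)\leq|\lm_0|$. Your version is in fact slightly more careful than the paper's, since you explicitly route $(3)\Rightarrow(1)\Rightarrow(2)$ and dispose of the degenerate case $D_1(y)=0$ rather than only the case $y=(0,0,0)$.
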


\begin{proof}
    If $(y_1,y_2,q)= (0,0,0)$ then clearly the inequality holds. For $(y_1,y_2,q)\neq (0,0,0)$ consider
    $$\lm_0  = \frac{|3y_1 - 3\bar{y}_2q|+|y_1y_2 - 9q|}{9 - |y_2|^2}.$$
    Since $(y_1,y_2,q) \in \G$, we have that
    $$|\lm_0|= \dfrac{|3y_1 - 3\bar{y}_2q|+|y_1y_2 - 9q|}{9 - |y_2|^2} = \n \p(.,(y_1,y_2,q)) \n < 1.$$
    Since $|y_2| \leq |y_1|$ condition $(3)$ of Theorem \ref{Schwarz Gn} is satisfied (for $n=3$). Hence condition $(2)$ of the same theorem, which is equivalent to condition $3$, holds and consequently we have the desired inequality.
    
\end{proof}

Let $y=(y_1,y_2,q)=\Bigg(\df{3}{2},\df{3}{4},\df{1}{2} \Bigg)$. Then, $y \in \G$ by part-(7) of Theorem \ref{char G 3}. Since $|y_2|<|y_1|$, we have by Lemma \ref{cor-Schwarz-lemma} that
\begin{align*}
    & \max\Bigg\{\frac{|3y_1 - 3\bar{y}_2q|+|y_1y_2 - 9q|}{9 - |y_2|^2},\frac{|3y_2 - 3\bar{y}_1q|+|y_1y_2 - 9q|}{9 - |y_1|^2}\Bigg\} \\
    & = \frac{|3y_1 - 3\bar{y}_2q|+|y_1y_2 - 9q|}{9 - |y_2|^2} = \frac{4}{5}.
\end{align*}
Let $\lm_0 = -\df{4}{5}$. We show that there are infinitely many analytic functions
$\vp : \D \longrightarrow  \F $ such that  $\vp(0) = (0,0,0) $ and $\vp(\lm_0) = y $.\\

Let $w^2 = \df{y_1y_2 - 9q}{9\lm_0} = \df{15}{32}$ and let
$$Z_y = \begin{bmatrix}
y_1/3\lm_0 & w \\
w & y_2/3
\end{bmatrix}
= \begin{bmatrix}
-\frac{5}{8} & w \\
w & \frac{1}{4}
\end{bmatrix}.$$
Since $y_1y_2 \neq 9q$ and $\n \p(.,y) \n = \dfrac{|3y_1 -
3\bar{y}_2q|+|y_1y_2 - 9q|}{9 - |y_2|^2} = |\lm_0|$, by Lemma
\ref{lemma-3.5} we have that $\n Z_y \n = 1$. The eigenvalues of
$Z_y$ are $-1$ and $5/8$, and
$
\begin{bmatrix}
\dfrac{8}{\sqrt{39}}w  \\
\dfrac{-3}{\sqrt{39}}
\end{bmatrix}
$
and
$
\begin{bmatrix}
\dfrac{4\sqrt{2}}{\sqrt{65}}w  \\
\dfrac{5\sqrt{2}}{\sqrt{65}}
\end{bmatrix}
$
are unit eigenvectors corresponding to the eigenvalues $-1$ and
$5/8$. Clearly these two eigenvectors are linearly independent.
Thus $Z_y$ is diagonalizable and can be written as
\[
Z_y  =
U_y\begin{bmatrix}
-1 & 0 \\
0 & 5/8
\end{bmatrix}U_y^*, \,
\text{ where the unitary matrix }
U_y =\begin{bmatrix}
\df{8}{\sqrt{39}}w & \df{4\sqrt{2}}{\sqrt{65}}w \\
\\
\df{-3}{\sqrt{39}} & \df{5\sqrt{2}}{\sqrt{65}}
\end{bmatrix}.
\]


Note that a function $G$ is a Schur Function if and only if $U^*GU$ is a Schur function. For each scalar function $g : \D \longrightarrow \overline{\mathbb{D}}$ satisfying $g(\lm_0)= \df{5}{8}$, we define a corresponding matrix valued function
$$H_g(\lm)
=\begin{bmatrix}
-1 & 0 \\
0 & g(\lm)
\end{bmatrix} ,\: \: \lm \in \D \; . $$
Then clearly $H_g$ is a Schur function. Now define a function by
$$G_g(\lm) = U_y H_g(\lm) U_y^* \q \text{for } \lm \in \D. $$
Since $U_y$ is unitary, we have $\n G_g(\lm) \n = \n H_g(\lm)\n $ for each $\lm \in \D$. Then $G_g$ is a Schur function and $G_g(\lm_0) =  Z_y$.


Note that,
\[
G_g(0) = U_y
\begin{bmatrix}
-1 & 0 \\
0 & g(0)
\end{bmatrix}
U_y^*
=\begin{bmatrix}
-\df{8}{\sqrt{39}}w & \df{4\sqrt{2}}{\sqrt{65}}w g(0)\\
\\
\df{3}{\sqrt{39}} & \df{5\sqrt{2}}{\sqrt{65}}g(0)
\end{bmatrix}
\begin{bmatrix}
\df{8}{\sqrt{39}}w & \df{-3}{\sqrt{39}} \\
\\
\df{4\sqrt{2}}{\sqrt{65}}w & \df{5\sqrt{2}}{\sqrt{65}}
\end{bmatrix},
\]
and thus \[\left[G_g(0)\right]_{22} = -\frac{9}{39} + \frac{50}{65}g(0)= \frac{1}{13}(10g(0)-3). \]
Hence for each scalar Schur function $g$ with
\begin{equation}\label{g}
    g(0) = 3/10 \q \text{and} \q g(-4/5) = 5/8,
\end{equation}
there is a Schur function $G_g$ satisfying
\begin{equation}\label{Gg}
    G_g(\lm_0)=Z_y \q \text{and} \q \big[G_g(0)\big]_{22} = 0.
\end{equation}
For each scalar Schur function $g$ satisfying \eqref{g}, we define another matrix valued function $F_g$
\[
F_g(\lm)
=G_g(\lm)
\begin{bmatrix}
\lm & 0\\
0 & 1
\end{bmatrix}
\q \text{for } \lm \in \D \;.
\]
Then $\n F_g(\lm) \n \leq 1$ for all $\lm \in \D$. Therefore, by Theorem \ref{char G 3}, $\lf [F_g(\lm)]_{11},[F_g(\lm)]_{22},\det F_g(\lm) \rf \in \F$. Note that
\[
F_g(\lm _0) =
\begin{bmatrix}
y_1/3 & w \\
\\
\lm_0w & y_2/3
\end{bmatrix} \;
\text{ and } \;
F_g(0)=
\begin{bmatrix}
0 & *\\
0 & 0
\end{bmatrix}.
\]


Finally for any scalar Schur function $g$ satisfying condition \eqref{g}, we define
\begin{align*}
    \vp_g  : \D & \longrightarrow  \F \\
    \lm & \longmapsto \lf [F_g(\lm)]_{11},[F_g(\lm)]_{22},\det F_g(\lm) \rf \; .
\end{align*}
Then $\vp_g$ is an analytic function such that $\vp_g(0) = (0,0,0)$ and
$$\vp_g(\lm_0) = \lf y_1, y_2, \dfrac{y_1 y_2}{9} - \lm_0 w^2 \rf = (y_1,y_2,q)=y.$$
Again
\[
F_g(\lm)
= U_y \begin{bmatrix}
-1 & 0 \\
0 & g(\lm)
\end{bmatrix}
U_y^*
\begin{bmatrix}
\lm & 0\\
0 & 1
\end{bmatrix} \q \text{for all } \lm \in \D.
\]
Since the matrix $U_y$ is unitary, the third component of $\vp_g$ is equal to $\det F_g(\lm)$ which is equal to $-\lm
g(\lm)$. Thus for a set of distinct functions $g$, the set of functions $\vp_g$ are distinct. The pseudo-hyperbolic distance between the points $\dfrac{3}{10}$ and $\dfrac{5}{8}$ is
\[d\left( \frac{3}{10} , \frac{5}{8} \right)= \frac{2}{5} < \frac{4}{5} = d\left(0, -\frac{4}{5} \right).\]
Hence there are infinitely many scalar Schur functions $g$ that satisfy \eqref{g}.

\section{A Schwarz lemma for $\gn$}

\vspace{0.3cm}

\noindent In this section, we present the desired Schwarz lemma for the symmetrized polydisc and this is the main result of this paper. The closure of $\gn$ is shown in \cite{costara1} to be the following set:
\[
\Gamma_n= \left\{ \left(\sum_{1\leq i\leq n} z_i,\sum_{1\leq
i<j\leq n}z_iz_j,\dots, \prod_{i=1}^n z_i \right): \,|z_i|\leq 1, \;
i=1,\dots,n \right \}.
\]
Needless to mention that $\Gamma_n$ is the image of the closed polydisc $\overline{\D^n}$ under the symmetrization map $\pi_n$.
We begin this section with the proof of the fact that $\gn=\widetilde{\mathbb G}_n$ for $n=1,2$ but $\gn \subsetneqq \Gn$ for $n\geq 3$. For that we need the following characterization theorem due to Costara, a part of which was mentioned in the Introduction.

\begin{thm}[\cite{costara1}, Theorems 3.6 \& 3.7]\label{gn}
	For a point $(s_1,\dots,s_{n-1},p) \in \C^n$, the following are equivalent:
	\begin{enumerate}
		\item The point $(s_1,\dots,s_{n-1},p) \in \gn$ (respectively $\in \Gamma_n$). 
		\item $|p|<1$ (respectively $\leq 1$) and there exists $\lf S_1,\dots, S_{n-1} \rf \in \mathbb G_{n-1}$ (respectively $\in \Gamma_{n-1}$) such that 
		\[ 
		s_j = S_j + \bar S_{n-j} p \q \text{for }\; j=1,\dots, n-1 .
		\]
	\end{enumerate}
\end{thm}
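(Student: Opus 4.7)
I would encode both sides of the equivalence by polynomials. For $(s_1,\dots,s_{n-1},p)\in\C^n$, set
\[
P_{s,p}(\lm)\;:=\;1-s_1\lm+s_2\lm^2-\dots+(-1)^n p\,\lm^n,
\]
and for $(\be_1,\dots,\be_{n-1})\in\C^{n-1}$ set
\[
Q_\be(\lm)\;:=\;1-\be_1\lm+\be_2\lm^2-\dots+(-1)^{n-1}\be_{n-1}\lm^{n-1}.
\]
A tuple lies in $\gn$ precisely when $P_{s,p}$ factors as $\prod_{i=1}^n(1-z_i\lm)$ with every $z_i\in\D$, equivalently, when $P_{s,p}$ is zero-free on $\overline\D$; similarly for $Q_\be$ and $\gn_{n-1}$ (the $\gamn$-case swaps ``no zeros in $\overline\D$'' for ``no zeros in $\D$''). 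A direct coefficient comparison—using that the $\lm^k$-coefficient of $\lm^n\,\overline{Q_\be(1/\bar\lm)}$ equals $(-1)^{n-k}\bar\be_{n-k}$ for $1\le k\le n-1$, vanishes at $k=0$, and equals $1$ at $k=n$—shows that the relations $s_k=\be_k+\bar\be_{n-k}p$ are equivalent to the polynomial identity
\[
P_{s,p}(\lm)\;=\;Q_\be(\lm)+(-1)^n p\,\lm^n\,\overline{Q_\be(1/\bar\lm)}. \qq (\ast)
\]
Thus the theorem reduces to: given $|p|<1$, $P_{s,p}$ is zero-free on $\overline\D$ iff there exists $Q$ of degree $\le n-1$ with $Q(0)=1$ that is zero-free on $\overline\D$ and satisfies $(\ast)$.

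For the direction starting from $(\be_1,\dots,\be_{n-1})\in\gn_{n-1}$ and $|p|<1$, I would apply Rouch\'e's theorem to $(\ast)$ on the unit circle: since $|\lm^n\overline{Q_\be(1/\bar\lm)}|=|Q_\be(\lm)|$ when $|\lm|=1$ and $|p|<1$, the perturbation term has strictly smaller modulus than $Q_\be(\lm)$ on $\T$ (using that $Q_\be$ is nonvanishing on $\overline\D$), so $P_{s,p}$ and $Q_\be$ have equal zero-counts in $\D$, namely zero. A boundary zero $\lm_0\in\T$ of $P_{s,p}$ is excluded by noting that $(\ast)$ would force $|Q_\be(\lm_0)|=|p||Q_\be(\lm_0)|$ and hence $Q_\be(\lm_0)=0$, contradicting the hypothesis. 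Thus $P_{s,p}$ is zero-free on $\overline\D$ and factors as $\prod(1-z_i\lm)$ with $z_i\in\D$ (padding with $z_i=0$ when $p=0$ forces $\deg Q_\be<n-1$). For the converse, I would use a Schur–Cohn-type inversion: writing $(\ast)$ together with its $(\cdot)^*$-conjugate—where $f^*(\lm):=\lm^n\overline{f(1/\bar\lm)}$—as a $2\times 2$ linear system in $(Q_\be,Q_\be^*)$ with determinant $1-|p|^2$, I can solve explicitly
\[
Q_\be\;=\;\frac{P_{s,p}-(-1)^n p\,P_{s,p}^*}{1-|p|^2}.
\]
One then verifies $Q_\be(0)=1$ and that the $\lm^n$-coefficient cancels (so $\deg Q_\be\le n-1$), and rerun the Rouch\'e-plus-boundary argument with the roles of $P$ and $Q$ swapped to conclude $Q_\be$ is zero-free on $\overline\D$, i.e., $(\be_1,\dots,\be_{n-1})\in\gn_{n-1}$.

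\textbf{Closures and main obstacle.} The closure versions follow by approximating a point $(s,p)\in\gamn$ from within $\gn$, extracting a convergent subsequence of the associated $\be$-tuples from the compact set $\gamn_{n-1}$, and passing to the limit in $(\ast)$; the converse direction is similar. The main obstacle is the Rouch\'e/boundary step: the strict inequality $|p|<1$ disposes of zeros inside $\D$ cleanly, but excluding zeros on $\T$ genuinely requires identity $(\ast)$ itself rather than a modulus estimate alone. The Schur–Cohn linear-algebraic inversion and the coefficient bookkeeping are routine; the zero-location analysis of the reduced polynomial is where the real content sits.
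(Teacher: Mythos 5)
The paper does not actually prove this statement: it is imported verbatim from Costara \cite{costara1} (Theorems 3.6 and 3.7) and used as a black box, so there is no internal proof to compare yours against. Judged on its own, your argument is correct and complete in all essentials. The encoding of membership in $\gn$ (resp.\ $\gamn$) as zero-freeness of $P_{s,p}$ on $\overline\D$ (resp.\ on $\D$) is right, the coefficient computation showing that $s_k=\be_k+\bar\be_{n-k}p$ is equivalent to the reflection identity $(\ast)$ checks out (including the $k=0$ and $k=n$ coefficients), the Rouch\'e step plus the separate boundary-zero exclusion is sound in both directions, and the inversion $Q_\be=\bigl(P_{s,p}-(-1)^n p\,P_{s,p}^*\bigr)/(1-|p|^2)$ is exactly the one-step Schur--Cohn reduction: one verifies $Q_\be(0)=1$, cancellation of the $\lm^n$-coefficient, and that $(\ast)$ is recovered because $(\cdot)^*$ is an involution with $(cf)^*=\bar c\,f^*$. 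Two small points deserve explicit mention rather than a wave: first, the parenthetical ``padding with $z_i=0$ when $p=0$ forces $\deg Q_\be<n-1$'' is garbled --- the padding is needed when $\deg P_{s,p}<n$, i.e.\ when $p=0$, and has nothing to do with the degree of $Q_\be$; second, your closure argument silently uses $\gamn=\overline{\gn}=\pi_n(\overline{\D}^n)$ (compactness of $\pi_n(\overline{\D}^n)$ and radial approximation), which is standard but should be stated, since the explicit inversion formula for $Q_\be$ degenerates when $|p|=1$ and the limiting argument is genuinely what carries the boundary case. With those caveats your proof is a clean, self-contained substitute for the citation.
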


The following straight-forward result has appeared in \cite{pal-roy 4} in an informal way. For the sake of completeness we present it here in the form of a lemma.

\begin{lemma}
	$\mathbb G_2 = \widetilde{\mathbb{G}}_2$, but $\gn \subsetneqq \Gn$ for $n\geq 3$.
\end{lemma}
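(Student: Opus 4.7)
The proof splits cleanly along the two assertions, and both reduce to Theorem~\ref{gn} (Costara's characterization) together with, for the strict inclusion, a single concrete example.

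For $n=2$: the containment $\mathbb G_2 \subseteq \widetilde{\mathbb G}_2$ has already been noted in the introduction, so only the reverse is needed. Let $(y_1,q)\in \widetilde{\mathbb G}_2$. Unwrapping the definition of $\widetilde{\mathbb G}_n$ at $n=2$, $j=1$, we have $|q|<1$ and there exists $\beta_1\in\C$ with $y_1=\beta_1+\bar\beta_1 q$ and $2|\beta_1|<\binom{2}{1}=2$, i.e.\ $\beta_1\in\D=\mathbb G_1$. This is exactly the condition of Theorem~\ref{gn} for $n=2$, so $(y_1,q)\in\mathbb G_2$.

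For $n\geq 3$: again $\mathbb G_n\subseteq\widetilde{\mathbb G}_n$ is already in the introduction, so the task reduces to exhibiting one point in $\widetilde{\mathbb G}_n\setminus\mathbb G_n$. I would work with
\[
y^{*}=\left(\tfrac{7}{5},\,\tfrac{7}{5},\,0,\dots,0,\,0\right)\in\C^n,
\]
i.e.\ $y_1=y_2=7/5$, $y_j=0$ for $3\leq j\leq n-1$, and $q=0$. Membership in $\widetilde{\mathbb G}_n$ follows by taking $\beta_1=\beta_2=7/5$ and $\beta_j=0$ otherwise, so that $y_j=\beta_j+\bar\beta_{n-j}q=\beta_j$; one then verifies $|\beta_j|+|\beta_{n-j}|<\binom{n}{j}$ by a short case analysis. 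The only nontrivial constraints occur at $j=1$ when $n=3$ (where $\tfrac{7}{5}+\tfrac{7}{5}=\tfrac{14}{5}<3$) and at $j=2$ when $n=4$ (where $y_{n-2}$ collapses to $y_2$, giving $2\cdot\tfrac{7}{5}<6$); all remaining constraints have a zero on the left-hand side.

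To see $y^{*}\notin\mathbb G_n$, suppose instead that $y^{*}=\pi_n(z_1,\dots,z_n)$ for some $z_i\in\D$. Since the last coordinate $\prod z_i=0$, at least one $z_i$ vanishes; after relabelling, $z_n=0$, and then $e_k(z_1,\dots,z_{n-1})=e_k(z_1,\dots,z_n)=y^{*}_k$ for $k=1,\dots,n-1$. Hence $z_1,\dots,z_{n-1}$ are the roots of
\[
P(z)=z^{n-1}-\tfrac{7}{5}z^{n-2}+\tfrac{7}{5}z^{n-3}=z^{n-3}\left(z^2-\tfrac{7}{5}z+\tfrac{7}{5}\right),
\]
whose non-zero roots have modulus $\sqrt{7/5}>1$, contradicting $z_i\in\D$. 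Thus $y^{*}\in\widetilde{\mathbb G}_n\setminus\mathbb G_n$, and the strict inclusion follows. There is no real obstacle here; the only care needed is in handling the low-dimensional cases $n=3,4$ uniformly with $n\geq 5$, which is why the constant $7/5$ (any $c\in(1,3/2)$ would do) was chosen so that the inequality $2c<3$ is tight enough for $n=3$ while $\sqrt{c}>1$ still forces a root outside $\overline{\D}$.
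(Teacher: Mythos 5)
Your proof is correct, and while the $n=2$ half coincides with the paper's argument (both reduce to the statement that $\mathbb G_2=\{(\beta+\bar\beta p,p):|\beta|<1,\,|p|<1\}$, which is Theorem~\ref{gn} at $n=2$, equivalently Theorem~2.1 of \cite{AY04}), your treatment of the strict inclusion for $n\ge 3$ takes a genuinely different route. The paper exhibits the point with $\beta_1=\tfrac{2n-1}{2}$, $\beta_2=\dots=\beta_{n-1}=0$ and $p=\tfrac12$, computes the unique solution $\beta_j=(y_j-\bar y_{n-j}p)/(1-|p|^2)$ of the linear system $y_j=\beta_j+\bar\beta_{n-j}p$, and then invokes Costara's characterization (Theorem~\ref{gn}) to conclude non-membership because $|\beta_1|>n-1$ forces $(\beta_1,\dots,\beta_{n-1})\notin\mathbb G_{n-1}$; the point of that choice is that the obstruction is located structurally in the failure of the $\beta$-tuple to lie in $\mathbb G_{n-1}$. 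You instead put $q=0$, which collapses the defining relations to $\beta_j=y_j$ (so membership in $\widetilde{\mathbb G}_n$ is immediate once the inequalities $|\beta_j|+|\beta_{n-j}|<{n\choose j}$ are checked), and you rule out membership in $\mathbb G_n$ without any appeal to Costara's recursion: the vanishing product forces one coordinate to be zero and the rest to be roots of the explicit polynomial $z^{n-3}\bigl(z^2-\tfrac75 z+\tfrac75\bigr)$, whose quadratic factor has root product $\tfrac75>1$ and hence a root outside $\overline{\mathbb D}$. Your version is more elementary and self-contained (it uses only the definition of the symmetrization map), at the cost of the small case analysis at $n=3,4$ and of losing the structural explanation that the paper's example makes visible; both arguments are complete and correct.
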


\begin{proof}
It is evident from definitions that $\mathbb G_n \subseteq \widetilde{\mathbb G_n}$
Let $(s,p) \in \mathbb G_2$. By Theorem 2.1 in \cite{AY04}, $|p|<1$ and $s = \be + \bar{\be} p$ for some $\beta \in \mathbb C$ with $|\be|<1$. So we have 
\[
\mathbb G_2 = \ls (\be + \bar{\be} p, p) : |\be|<1, |p|<1 \rs = \widetilde{\mathbb G_2}.
\]
Now we show that $\mathbb G_3 \subsetneqq \G$. It is evident from the definition that $(s,p)\in \mathbb G_2$, then $|s|<2$. Consider the point $\lf \dfrac{5}{2}, \dfrac{5}{4}, \dfrac{1}{2} \rf \in \C^3$. We can write $\lf \dfrac{5}{2}, \dfrac{5}{4}, \dfrac{1}{2} \rf = \lf \be_1 + \overline\be_2 p, \be_2 + \overline\be_1 p, p \rf$, where $\be_1 = \dfrac{5}{2}, \be_2 = 0, p=\dfrac{1}{2} $. Also $\beta_1 , \beta_2$ are unique because if we write $(y_1,y_2,p)=\lf \dfrac{5}{2}, \dfrac{5}{4}, \dfrac{1}{2} \rf$, then
\[
\beta_1=\frac{y_1-\bar{y}_2p}{1-|p|^2} \; \text{ and } \; \beta_2=\frac{y_2-\bar{y}_1p}{1-|p|^2}.
\]
Since $|\beta_1|+|\beta_2|<3$, $\lf \dfrac{5}{2}, \dfrac{5}{4}, \dfrac{1}{2} \rf \in \widetilde{\mathbb G_3}$. Clearly $\lf \be_1, \be_2 \rf \notin \mathbb G_2$, as $|\be_1| > 2$. By Theorem \ref{gn}, $\lf \dfrac{5}{2}, \dfrac{5}{4}, \dfrac{1}{2} \rf \notin \mathbb G_3$. So, $\mathbb G_3 \subsetneqq \widetilde{\mathbb G_3}$. In case of $n=3$, we choose $(\beta_1 ,\beta_2)=\lf \dfrac{5}{2},0 \rf$. Similarly for any $n>3$, we may choose $\lf \beta_1,\dots ,\beta_{n-1} \rf=\lf \dfrac{2n-1}{2},0,\dots, 0 \rf$ and $p=\dfrac{1}{2}$ to obtain a point $(y_1,\dots , y_{n-1},p)$, where $y_i=\beta_i+ \bar{\beta}_{n-i}p,$ for $i=1,\dots, n-1$, which is in $\widetilde{\mathbb G_n}$ but not in $\mathbb G_n$.
\end{proof}

In \cite{costara1}, Costara introduced the following rational function to characterize a point in $\gn$. For a point $s=(s_1,\dots,s_{n-1},p) \in \C^n$, let $f_s$ be the function defined by
\begin{equation}\label{costara function}
    f_s(z):= \dfrac{n(-1)^n p z^{n-1} + (n-1)(-1)^{n-1} s_{n-1} z^{n-2} +\cdots + (-s_1)}{n - (n-1)s_1 z + \cdots + (-1)^{n-1}s_{n-1} z^{n-1}}.
\end{equation}
It was shown in \cite{costara1} that the poles of $f_s$ lie outside $\overline\D$. This rational function characterizes the points in $\gn$ and $\gamn$ in the following way.

\begin{thm}[\cite{costara1}, Theorem 3.1 and 3.2]\label{costara}
    Let $s=(s_1,\dots,s_{n-1},p) \in \C^n$, and let $f_s$ be given by $\eqref{costara function}$. Then the following are equivalent:
    \begin{itemize}
        \item[(i)] $s=(s_1,\dots,s_{n-1},p) \in \gn \q ($or  $\in \gamn)$;
        \item[(ii)] $\q {\displaystyle \sup_{|z|\leq 1}|f_s(z)| < 1 } \q ($or $\leq 1)$.
    \end{itemize}
\end{thm}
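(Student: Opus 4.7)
The plan is to recognise $f_s$ as a concrete rational function in the roots of the polynomial $P(w) = w^n - s_1 w^{n-1} + \cdots + (-1)^n p$ and then reduce the sup-norm bound to an elementary half-plane estimate. First I would observe, by matching coefficients, that the numerator and denominator of $f_s$ are precisely $Q'(z)$ and $nQ(z) - zQ'(z)$, where
\[
Q(z) \;:=\; z^n P(1/z) \;=\; 1 + \sum_{k=1}^n (-1)^k s_k z^k \qquad (s_n := p)
\]
is the reciprocal polynomial of $P$. Because membership $s \in \gamn$ is, via the symmetric-function relations, equivalent to all roots of $P$ lying in $\overline{\D}$, one is reduced to studying $f_s$ under the factorisation $Q(z) = \prod_i (1 - zz_i)$.

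A short logarithmic-derivative calculation then yields the compact formula
\[
f_s(z) \;=\; \frac{-\sum_i z_i/(1-zz_i)}{\sum_i 1/(1-zz_i)} \;=\; \frac{1}{z}\!\left(\frac{n}{B(z)}-1\right), \qquad B(z) := \sum_{i=1}^n \frac{1}{1-zz_i},
\]
and an easy algebraic manipulation shows that, on $|z|=1$, the inequality $|f_s(z)|<1$ is equivalent to $\Re B(z) > n/2$, while $|f_s(z)|\le 1$ is equivalent to $\Re B(z) \ge n/2$.

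For (i)$\Rightarrow$(ii) I would combine this with the pointwise bound
\[
\Re\,\frac{1}{1-\zeta} \;=\; \frac{1-\Re\zeta}{|1-\zeta|^2} \;>\; \frac{1}{2} \qquad\text{whenever } |\zeta|<1,
\]
applied with $\zeta = zz_i$. Summing over $i$ gives $\Re B(z) > n/2$ throughout $\overline{\D}$ whenever $s\in\gn$; in particular $B$ does not vanish on $\overline{\D}$, so $f_s$ is holomorphic there and the maximum modulus principle transports the bound to $|z|=1$. The closed case $s\in\gamn$ proceeds in parallel, with the inequality weakened to $\ge$ and the finitely many points on $|z|=1$ where some $zz_i = 1$ handled as removable singularities of $f_s$, of value $-1/z$ and thus of modulus one. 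For the reverse implication I would argue contrapositively: if some root $z_i$ of $P$ satisfies $|z_i|>1$ (resp.\ $|z_i|\ge 1$), then $1/z_i \in \D$ (resp.\ $\in \overline{\D}$) is a zero of $Q$, and a short local computation shows $f_s(1/z_i) = -z_i$, contradicting $\sup|f_s|\le 1$ (resp.\ $<1$).

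The main obstacle will be the local analysis of $f_s$ at multiple zeros of $Q$: both $Q'$ and $nQ-zQ'$ vanish simultaneously there, and one must verify via a Taylor expansion that the limit of $f_s$ is still $-1/z_0$. Everything else reduces to the half-plane estimate $\Re\,(1-\zeta)^{-1}>\tfrac12$, which is a one-line algebraic check.
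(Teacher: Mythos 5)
Your proposal is correct, but note that the paper itself offers no proof of this statement: it is imported verbatim from Costara (\cite{costara1}, Theorems 3.1 and 3.2), so there is nothing internal to compare against. Judged on its own merits, your argument is sound and essentially complete. The coefficient matching $f_s = Q'/(nQ - zQ')$ with $Q(z)=z^nP(1/z)=\prod_i(1-zz_i)$ is right, the identity $nQ-zQ'=Q\cdot B$ gives $f_s=\frac1z\bigl(\frac{n}{B}-1\bigr)$ as you claim, and on $|z|=1$ the equivalence $|n/B-1|<1\Leftrightarrow \Re B>n/2$ is a one-line computation (you use it only in the forward direction, where $\Re B>n/2$ is established first, so the implicit assumption $B\neq 0$ is harmless). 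The half-plane bound $\Re(1-\zeta)^{-1}>\tfrac12$ for $|\zeta|<1$ reduces to $|\zeta|^2<1$, and together with $Q\neq 0$ on $\overline\D$ it makes $f_s$ holomorphic on a neighbourhood of $\overline\D$, so the maximum principle finishes (i)$\Rightarrow$(ii). The "obstacle" you flag at a zero $z_0=1/z_i$ of $Q$ of multiplicity $m$ is genuinely minor: writing $Q=(z-z_0)^mR$ with $R(z_0)\neq 0$, both $Q'$ and $nQ-zQ'$ vanish to order exactly $m-1$ at $z_0$ and the reduced fraction takes the value $mR(z_0)/(-z_0mR(z_0))=-1/z_0=-z_i$, which is exactly what your contrapositive for (ii)$\Rightarrow$(i) needs ($|f_s(1/z_i)|=|z_i|$, contradicting the sup bound when $|z_i|>1$, resp.\ $\geq 1$). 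One small point worth making explicit when you write this up: the identification of condition (i) with "all roots of $P(w)=w^n-s_1w^{n-1}+\cdots+(-1)^np$ lie in $\D$ (resp.\ $\overline\D$)" is immediate from the definition of $\gn$ as $\pi_n(\D^n)$ together with Vieta's formulas, and should be stated as the first step.
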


\noindent For $r>0$, let $\D_r = \ls z \in \C : |z|<r \rs$. We denote by $\mathbb{G}_{n_r}$ and $\Gamma_{n_r}$ respectively the images of the set $\underbrace{\D_r \times \cdots \times \D_r}_{n- times}$ and its closure under the symmetrization map $\pi_n$, that is,
\[
\mathbb{G}_{n_r} = \pi_n ( \underbrace{\D_r \times \cdots \times \D_r }_{n- times} )
=\left\{ \left(\sum_{1\leq i\leq n} z_i,\sum_{1\leq i<j\leq n}z_iz_j,\dots, \prod_{i=1}^n z_i \right): \,z_i \in \D_r, i=1,\dots,n \right \} ,
\]
and
\[
\Gamma_{n_r} = \pi_n ( \underbrace{\overline\D_r \times \cdots \times \overline\D_r }_{n- times} )
=\left\{ \left(\sum_{1\leq i\leq n} z_i,\sum_{1\leq i<j\leq n}z_iz_j,\dots, \prod_{i=1}^n z_i \right): \,z_i \in \overline\D_r, i=1,\dots,n \right \} .
\]
Needless to mention that $\mathbb G_{n_r} \subseteq \gn$ and $\Gamma_{n_r}\subseteq \gamn$ for $0<r\leq 1$.

\begin{lemma}\label{schwarz gn prep}
    Let $(s_1,\dots,s_{n-1},p) \in \C^n$, and let $\lambda \in \overline{\D} \setminus \{ 0 \}$. Then the following are equivalent
    \begin{enumerate}
        \item $(s_1,\dots,s_{n-1},p) \in \mathbb{G}_{n_{|\lm|}}$ $(or\; \in \Gamma_{n_{|\lm|}})$.
        \item $\lf \dfrac{s_1}{\lm},\dots,\dfrac{s_{n-1}}{\lm^{n-1}}, \dfrac{p}{\lm^n} \rf \in \gn$ $(or\; \in \gamn)$
    \end{enumerate}
\end{lemma}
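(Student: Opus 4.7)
The approach is a direct scaling argument built on the homogeneity of the elementary symmetric polynomials. Recall that the $k$-th elementary symmetric polynomial $e_k$ satisfies $e_k(tz_1,\dots, tz_n)=t^k e_k(z_1,\dots,z_n)$ for any scalar $t$, and in particular, $z_1\cdots z_n$ is homogeneous of degree $n$. This identity is exactly what turns the dilation of the underlying polydisc into the claimed coordinate-wise division by powers of $\lambda$.

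The plan is as follows. For the forward direction, assume $(s_1,\dots,s_{n-1},p)\in \mathbb{G}_{n_{|\lambda|}}$. By definition there exist $z_1,\dots,z_n$ with $|z_i|<|\lambda|$ such that $s_k = e_k(z_1,\dots,z_n)$ for $k=1,\dots,n-1$ and $p = z_1\cdots z_n$. Set $w_i := z_i/\lambda$, so that $|w_i|<1$. Using homogeneity,
\[
\frac{s_k}{\lambda^k}=\frac{e_k(z_1,\dots,z_n)}{\lambda^k}=e_k\!\left(\frac{z_1}{\lambda},\dots,\frac{z_n}{\lambda}\right)=e_k(w_1,\dots,w_n),\qquad 1\le k\le n-1,
\]
and similarly $p/\lambda^n = w_1\cdots w_n$. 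Hence $\pi_n(w_1,\dots,w_n) = (s_1/\lambda,\dots, s_{n-1}/\lambda^{n-1}, p/\lambda^n)$, which lies in $\mathbb{G}_n$ since $(w_1,\dots,w_n)\in \mathbb{D}^n$.

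For the converse direction, suppose $(s_1/\lambda,\dots, s_{n-1}/\lambda^{n-1}, p/\lambda^n)\in \mathbb{G}_n$. Then there exist $w_1,\dots,w_n$ with $|w_i|<1$ such that $s_k/\lambda^k = e_k(w_1,\dots,w_n)$ and $p/\lambda^n = w_1\cdots w_n$. Setting $z_i := \lambda w_i$, we get $|z_i|<|\lambda|$ and, again by homogeneity,
\[
s_k = \lambda^k e_k(w_1,\dots,w_n) = e_k(\lambda w_1,\dots,\lambda w_n)=e_k(z_1,\dots,z_n),\qquad p = z_1\cdots z_n,
\]
so $(s_1,\dots,s_{n-1},p)=\pi_n(z_1,\dots,z_n)\in \mathbb{G}_{n_{|\lambda|}}$.

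The closure case goes through verbatim, replacing strict inequalities by non-strict ones: if $|z_i|\le |\lambda|$, then $|z_i/\lambda|\le 1$, and vice versa. The main (and only) obstacle to watch is the pointwise bijection between the polydiscs of radii $|\lambda|$ and $1$ under $z\mapsto z/\lambda$; this is immediate for $\lambda\neq 0$, which is guaranteed by hypothesis, so no difficulty arises. Thus the entire equivalence reduces to the homogeneity identity for the elementary symmetric polynomials and the scaling bijection $z\mapsto z/\lambda$ on the disc.
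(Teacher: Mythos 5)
Your proposal is correct and follows essentially the same route as the paper: both directions are the scaling argument $z_i \mapsto z_i/\lambda$ (resp. $w_i \mapsto \lambda w_i$) combined with the homogeneity $e_k(\lambda w) = \lambda^k e_k(w)$ of the elementary symmetric polynomials, with the closure case handled by relaxing the strict inequalities. The only cosmetic difference is that you state the homogeneity identity explicitly, whereas the paper uses it implicitly when computing $\pi_n(z_1/\lambda,\dots,z_n/\lambda)$.
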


\begin{proof}

We prove for $\mathbb G_{n_{|\lm|}}$ and $\gn$. A proof for $\Gamma_{n_{|\lm|}}$ and $\gamn$ is similar.

    \noindent $(1) \imp (2)$. Suppose $(s_1,\dots,s_{n-1},p) \in \mathbb{G}_{n_{|\lm|}}$. Then there exist $z_1, z_2,\dots, z_n \in \D_{|\lm|}$ such that
    \[
    s_i(z)= \sum_{1\leq k_1 < k_2 \cdots < k_i \leq n} z_{k_1}\cdots z_{k_i} \quad \text{ and } p(z)=\prod_{i=1}^{n}z_i\,.
    \]
    Since $z_j \in \D_{|\lm|}$, we have $\dfrac{z_j}{\lm} \in \D$ for all $j=1,\dots, n$. Also
    \[\pi_n \lf \dfrac{z_1}{\lm}, \dots, \dfrac{z_n}{\lm} \rf =  \lf \dfrac{s_1}{\lm},\dots,\dfrac{s_{n-1}}{\lm^{n-1}}, \dfrac{p}{\lm^n} \rf .\]
    Thus $\lf \dfrac{s_1}{\lm},\dots,\dfrac{s_{n-1}}{\lm^{n-1}}, \dfrac{p}{\lm^n} \rf \in \gn$.\\

    \noindent $(2) \imp (1)$. Suppose $\lf \dfrac{s_1}{\lm},\dots,\dfrac{s_{n-1}}{\lm^{n-1}}, \dfrac{p}{\lm^n} \rf \in \gn$. Then there exist $w_1, w_2,\dots, w_n \in \D$ such that
    \[
    \dfrac{s_1}{\lm} = w_1+ \cdots + w_n, \q \dfrac{s_2}{\lm^2}= \sum_{1\leq j<k \leq n} w_j w_k,\: \dots,\;  \dfrac{p}{\lm^n} = \prod_{j=1}^{n} w_j.
    \]
    Clearly $\lm w_j \in \D_{|\lm|}$ for all $j = 1, \dots, n$, and
    $
    (s_1,\dots,s_{n-1},p) = \pi_n (\lm w_1, \dots, \lm w_n)
    $.
    Therefore, $(s_1,\dots,s_{n-1},p) \in \mathbb{G}_{n_{|\lm|}}$.
\end{proof}

\noindent  We now arrive at the desired main result of this article.

\begin{thm}
    Let $ \underline s^0= (s_1^0,\dots,s_{n-1}^0,p^0) \in \gn$ and let $\lm_0 \in \D \; \backslash \; \{0\}$. Suppose there exists an analytic function $\vp  :  \D \rightarrow  \gn $ such that  $\; \vp(0) = (0,\dots,0) $ and $\; \vp(\lm_0) = \underline s^0 $. Then
    \item[(1)]
    \[
    \sup_{z \in \overline \D} \left| \dfrac{n(-1)^n p^0 z^{n-1} + (n-1)(-1)^{n-1} s_{n-1}^0 z^{n-2} +\cdots + (-s_1^0)}{n - (n-1)s_1^0 z + \cdots + (-1)^{n-1}s_{n-1}^0 z^{n-1}} \right| \leq |\lm_0|.
    \]
    \item[(2)] The conditions $(2) - (11)$ of Theorem \ref{Schwarz Gn} hold for $y^0=\underline s^0 \in \gn$. Also, for $n=2$ each of these conditions is equivalent to the existence of such an interpolant $\psi$.
\end{thm}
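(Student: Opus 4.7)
The plan is to leverage Costara's characterization (Theorem \ref{costara}) for part (1) and the inclusion $\gn \subseteq \Gn$ for part (2), so that both parts reduce to tools already established in the paper.

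For part (1), the approach is to introduce the two-variable function $F(\lm, z) := f_{\vp(\lm)}(z)$ on $\D \times \overline{\D}$, where $f_s$ is Costara's rational function defined in \eqref{costara function}. First I would verify that the denominator $n - (n-1)\vp_1(\lm) z + \cdots + (-1)^{n-1}\vp_{n-1}(\lm) z^{n-1}$ does not vanish on $\D \times \overline{\D}$; this follows because $\vp(\lm) \in \gn$ for every $\lm \in \D$ and Costara proved that the poles of $f_s$ lie outside $\overline{\D}$ whenever $s \in \gn$. Consequently $F$ is jointly analytic on $\D \times U$ for some open neighborhood $U$ of $\overline{\D}$. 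Next I would observe that $F(0,z) = f_{(0,\dots,0)}(z) = 0$ since the entire numerator of $f_s$ vanishes identically at $s = (0,\dots,0)$, and that, by Theorem \ref{costara}, $|F(\lm, z)| < 1$ for all $(\lm,z) \in \D \times \overline{\D}$. For each fixed $z \in \overline{\D}$, the map $\lm \mapsto F(\lm,z)$ is then an analytic self-map of $\D$ vanishing at the origin, so by the classical one-variable Schwarz lemma, $|F(\lm_0, z)| = |f_{\underline s^0}(z)| \leq |\lm_0|$. Taking the supremum over $z \in \overline{\D}$ yields the estimate in (1).

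For part (2), the approach is simply to observe that the inclusion $\gn \subseteq \Gn$ means the given $\vp$ is also an analytic map from $\D$ into $\Gn$ satisfying $\vp(0) = (0,\dots,0)$ and $\vp(\lm_0) = \underline s^0$. Hence condition $(1)$ of Theorem \ref{Schwarz Gn} holds for $y^0 = \underline s^0 \in \Gn$, and by the implications already proved in that theorem, each of the conditions $(2)$--$(11)$ of Theorem \ref{Schwarz Gn} holds for $y^0 = \underline s^0$. For the converse statement when $n = 2$, the preceding lemma of this section shows $\mathbb G_2 = \widetilde{\mathbb G}_2$, so Theorem \ref{Schwarz Gn}'s equivalence of all eleven conditions (valid for $n = 1,2,3$) directly yields equivalence with the existence of the interpolating function.

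I anticipate the only delicate point to be the joint analyticity of $F$ on $\D \times \overline{\D}$; once the non-vanishing of Costara's denominator is invoked, the remainder of part (1) is a routine parameterized Schwarz lemma argument. Part (2) is essentially a direct corollary of Theorem \ref{Schwarz Gn} combined with the identification $\mathbb G_2 = \widetilde{\mathbb G}_2$, so no new estimates are required there.
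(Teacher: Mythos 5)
Your proposal is correct and follows essentially the same route as the paper: part (1) is the composition $f_{(\cdot)}(z)\circ\vp$ with the classical Schwarz lemma applied for each fixed value of the second variable (the paper fixes $\omega\in\T$ and finishes with the maximum principle, while you fix $z\in\overline{\D}$ directly using Theorem \ref{costara}; joint analyticity is not actually needed, only analyticity in $\lm$ for each fixed $z$, which the non-vanishing of Costara's denominator already gives), and part (2) is the same reduction via $\gn\subseteq\Gn$ and the identification $\mathbb G_2=\widetilde{\mathbb G}_2$.
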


\begin{proof} \item[(1)]
    Let $\vp  :  \D \rightarrow  \gn $ such that  $\; \vp(0) = (0,\dots,0) $ and $\; \vp(\lm_0) = \underline s^0 $. For $\omega \in \T$, consider the function $f_{\om} : \gn \longrightarrow \C$, defined as
    \[
    f_{\om} : (s_1,\dots,s_{n-1},p) \longmapsto \dfrac{n(-1)^n p {\om}^{n-1} + (n-1)(-1)^{n-1} s_{n-1} {\om}^{n-2} +\cdots + (-s_1)}{n - (n-1)s_1 \om + \cdots + (-1)^{n-1}s_{n-1} {\om}^{n-1}}
    \]
    It is evident from Lemma \ref{schwarz gn prep} that the function $f_{\om}$ is well defined and that $f_{\om}(\gn)= \D$. Then the function $g_{\om}= f_{\om} \circ \vp : \D \rightarrow \D$ is analytic, $g_{\om}(0)= (0,\dots,0)$ and
    \[
    g_{\om}(\lm_0)=  \dfrac{n(-1)^n p^0 {\om}^{n-1} + (n-1)(-1)^{n-1} s_{n-1}^0 {\om}^{n-2} +\cdots + (-s_1^0)}{n - (n-1)s_1^0 \om + \cdots + (-1)^{n-1}s_{n-1}^0 {\om}^{n-1}}.
    \]
    Therefore, by the classical Schwarz lemma for $\D$ we have
    \[
    \left|  \dfrac{n(-1)^n p^0 {\om}^{n-1} + (n-1)(-1)^{n-1} s_{n-1}^0 {\om}^{n-2} +\cdots + (-s_1^0)}{n - (n-1)s_1^0 \om + \cdots + (-1)^{n-1}s_{n-1}^0 {\om}^{n-1}} \right| \leq |\lm_0|.
    \]
    The above inequality is true for each $\om \in \T$, hence
    \[
    \sup_{\om \in \T} \left|  \dfrac{n(-1)^n p^0 {\om}^{n-1} + (n-1)(-1)^{n-1} s_{n-1}^0 {\om}^{n-2} +\cdots + (-s_1^0)}{n - (n-1)s_1^0 \om + \cdots + (-1)^{n-1}s_{n-1}^0 {\om}^{n-1}} \right| \leq |\lm_0|.
    \]
    Then, by Maximum principle
    \[
    \sup_{z \in \overline{\D}} \left|  \dfrac{n(-1)^n p^0 z^{n-1} + (n-1)(-1)^{n-1} s_{n-1}^0 z^{n-2} +\cdots + (-s_1^0)}{n - (n-1)s_1^0 z + \cdots + (-1)^{n-1}s_{n-1}^0 z^{n-1}} \right| \leq |\lm_0|.
    \]
    \item[(2)] Since $\vp$ maps $\D$ into $\Gn$, it is evident from Theorem \ref{Schwarz Gn} that conditions $(2) - (11)$ of Theorem \ref{Schwarz Gn}  follow necessarily if we replace $y^0$ by $\underline s^0$. Also, each of these conditions is equivalent to the existence of such an interpolant $\psi$ because, the conditions $(2) - (11)$ of Theorem \ref{Schwarz Gn} are all necessary and sufficient for the existence of an interpolant as in Theorem \ref{Schwarz Gn}. The proof is now complete.\\

\end{proof}

\section{Proofs of Lemma \ref{lemma-3.5} and Lemma \ref{lemma-3.6}}

\vspace{0.4cm}

    \noindent \textit{Proof of Lemma \ref{lemma-3.5}}.  We have that,
    	\[
    	 I-Z_j^*Z_j =
    	\begin{bmatrix}
    	1-\df{| y_j^0|^2}{\nj^2 |\lm_0|^2}-|w_j|^2 & &-\df{\bar{ y}_j^0}{\nj \bar{\lm_0}}w_j - \df{y_{n-j}^0}{\nj}\bar{w_j}\\
    	\\
    	-\df{ y_j^0}{\nj \lm_0}\bar{w_j} - \df{\bar{ y}_{n-j}^0}{\nj}w_j & & 1-\df{|y_{n-j}^0|^2}{\nj^2}-|w_j|^2
    	\end{bmatrix},
    	\]
where $w_j^2 = \dfrac{ y_j^0 y_{n-j}^0}{\nj^2 \lm_0} -  q^0 $. Therefore,
       \begin{equation}\label{app-1}
            \det(I-Z_j^*Z_j) =  \df{1}{\nj^2}\left( \nj^2 - \dfrac{| y_j^0|^2}{|\lm_0|^2} - |y_{n-j}^0|^2- 2\dfrac{| y_j^0  y_{n-j}^0 - \nj^2 q^0|}{|\lm_0|} + \nj^2 | q^0 |^2 \right).
            \end{equation}
        Since $\n \Phi_j(.,y) \n \leq |\lm_0|$, by $\eqref{Dj to 5}$, we have
        \begin{equation}\label{for 22 position}
            {n \choose j}\left|\dfrac{y_j^0}{\lm_0} - \bar y_{n-j}^0 \dfrac{q^0}{\lm_0}\right| + \left|\dfrac{y_j^0}{\lm_0} y_{n-j}^0 - {n \choose j}^2 \dfrac{q^0}{\lm_0} \right| \leq {n \choose j}^2 -|y_{n-j}^0|^2 .
        \end{equation}
        Then, by the equivalence of condition $(3)$ and $(5)$ of Theorem \ref{char G 3}, we have
        \begin{equation}\label{for det}
            \nj^2 - \dfrac{| y_j^0|^2}{|\lm_0|^2} - |y_{n-j}^0|^2- 2\dfrac{| y_j^0  y_{n-j}^0 - \nj^2 q^0|}{|\lm_0|} + \nj^2  \dfrac{|q^0|^2}{|\lm_0|^2} \geq 0,
        \end{equation}
        and by the equivalence of condition $(3)$ and $(3')$ of Theorem \ref{char G 3}, we have
        \begin{equation}\label{for 11 position}
            {n \choose j}\left|y_{n-j}^0 - \dfrac{\bar y_j^0}{\bar{\lm_0}} \dfrac{q^0}{\lm_0}\right| + \left|\dfrac{y_j^0}{\lm_0} y_{n-j}^0 - {n \choose j}^2 \dfrac{q^0}{\lm_0} \right| \leq {n \choose j}^2 -\dfrac{|y_j^0|^2}{|\lm_0|^2} .
        \end{equation}
        Therefore, by $\eqref{for 22 position}$, $\eqref{for det}$ and $\eqref{for 11 position}$ we have
        
        \begin{align*}
            & \det(I-Z_j^*Z_j) \geq 0, \\
            &(I-Z_j^*Z_j)_{11} = 1-\df{| y_j^0|^2}{\nj^2 |\lm_0|^2} -\df{| y_j^0 y_{n-j}^0 - \nj^2 q^0|}{\nj^2 |\lm_0|} \geq 0 ,\\
            \text{ and } \qq  &(I-Z_j^*Z_j)_{22} =1-\df{|y_{n-j}^0|^2}{\nj^2} -\df{| y_j^0 y_{n-j}^0 - \nj^2 q^0|}{\nj^2 |\lm_0|} \geq 0.
        \end{align*}
        Hence, $\det(I-Z_j^*Z_j)$ and the diagonal entries of $(I-Z_j^*Z_j)$ are all non-negative and so $\n Z_j \n \leq 1$.
        
Moreover, $\n \Phi_j(.,y) \n < |\lm_0|$ if and only if we have a strict inequality in $\eqref{for 22 position}$. Consequently, using the equivalence of conditions $(3)$, $(3')$ and $(5)$ of Theorem \ref{char G 3}, we have a strict inequality in both $\eqref{for 11 position}$ and $\eqref{for 22 position}$ if and only if $\n \Phi_j(.,y) \n < |\lm_0|$. Therefore, the diagonal entries and determinant of
        $(I-Z_j^*Z_j)$ are strictly positive and so $\n Z_j \n < 1$ if and only if $\n \Phi_j(.,y) \n < |\lm_0|$. Thus, $\n Z_j \n = 1$ if and only if $\n \Phi_j(.,y) \n = |\lm_0|$.
        
    \qed
    
    \vspace{0.3cm}
    
     \noindent \textit{Proof of Lemma \ref{lemma-3.6}}. We have that
	\begin{equation*}
	\mathcal{K}_{Z_j}(|\lm_0|) =\begin{bmatrix}
	[(1-|\lm_0|^2 Z_j^*Z_j)(1-Z_j^*Z_j)^{-1}]_{11} & [(1- |\lm_0|^2)(1 - Z_j Z_j^*)^{-1}Z_j]_{21} \\
	\\
	[(1 - |\lm_0|^2)Z_j^*(1 - Z_jZ_j^*)^{-1}]_{12} & [(Z_jZ_j^* - |\lm_0|^2)(1 - Z_jZ_j^*)^{-1}]_{22}
	\end{bmatrix}.
	\end{equation*}
	 For the given $Z_j$, we have
	 \begingroup
	 \allowdisplaybreaks
	 \begin{align*}
	 & (1-|\lm_0|^2 Z_j^*Z_j) =\begin{bmatrix}
	 1-\df{|y_j^0|^2}{\nj^2} -|\lm_0|^2|w_j|^2  & -\df{\lm_0\bar{y}_j^0 w_j}{\nj}- \df{|\lm_0|^2y_{n-j}^0\bar{w_j}}{\nj}   \\
	 \\
	 -\df{\bar\lm_0 y_j^0 \bar{w_j}}{\nj}- \df{|\lm_0|^2\bar{y}_{n-j}^0w}{\nj} & 1 - |\lm_0|^2|w_j|^2 -\df{|\lm_0|^2|y_{n-j}^0|^2}{\nj^2}
	 \end{bmatrix},\\
	 &(1-Z_j^*Z_j)^{-1} = \df{1}{\det(1-Z_j^*Z_j)}\begin{bmatrix}
	 1 - |w_j|^2 -\df{|y_{n-j}^0|^2}{\nj^2}   & \df{\bar{y}_j^0 w_j}{\nj \bar{\lm}_0}+ \df{y_{n-j}^0\bar{w_j}}{\nj}   \\
	 \\
	 \df{y_j^0 \bar{w_j}}{\nj\lm_0}+ \df{\bar{y}_{n-j}^0 w_j}{\nj} &  1-\df{|y_j^0|^2}{\nj|\lm_0|^2}- |w_j|^2
	 \end{bmatrix}.
	 \end{align*}
	 \endgroup
	 Then by a few steps of calculations we have
	 \begin{align}\label{app-4}
	 \nonumber &[(1-|\lm_0|^2 Z_j^*Z_j)(1-Z_j^*Z_j)^{-1}]_{11} \\
	 &=\df{1}{\det(1-Z_j^*Z_j)}\Bigg[ 1 - \df{|y_j^0|^2}{\nj^2} - \df{|y_{n-j}^0|^2}{\nj^2} + |q^0|^2 -
	 \df{|y_j^0y_{n-j}^0-\nj^2q^0|}{\nj^2}\Big(|\lm_0| +\frac{1}{|\lm_0|}\Big) \Bigg] .
	 \end{align}
	 Therefore,
	 \begin{align*}
	 &[\mathcal{K}_{Z_j}(|\lm_0|)\det (1-Z_j^*Z_j)]_{11} \\
	 & \qq =  1 - \df{|y_j^0|^2}{\nj^2} - \df{|y_{n-j}^0|^2}{\nj^2} + |q^0|^2 -
	 \df{|y_j^0y_{n-j}^0-\nj^2q^0|}{\nj^2}\Big(|\lm_0| +\frac{1}{|\lm_0|}\Big).
	 \end{align*}
	 Note that,
	 $$(1-Z_jZ_j^*)^{-1} = \df{1}{\det(1-Z_j^*Z_j)} \begin{bmatrix}
	 1 - |w_j|^2 -\df{|y_{n-j}^0|^2}{\nj^2}  &   \df{y_j^0 \bar{w}_j}{\nj \lm_0}+ \df{\bar{y}_{n-j}^0  w_j}{\nj}\\
	 \\
	 \df{\bar{y}_j^0 w_j}{\nj \bar{\lm}_0}+ \df{y_{n-j}^0 \bar{w}_j}{\nj} &  1-\df{|y_j^0|^2}{\nj^2|\lm_0|^2} - |w_j|^2
	 \end{bmatrix}.$$
	 Then, by simple calculation, we have
	 \begingroup
	 \allowdisplaybreaks
	 \begin{align*}
	 [(1-Z_jZ_j^*)^{-1}Z_j]_{21}
	 &= \df{1}{\det(1-Z_j^*Z_j)}\Bigg[\df{|y_j^0|^2w_j}{\nj^2 |\lm_0|^2} + \df{y_j^0y_{n-j}^0 \bar{w}_j}{\nj^2\lm_0} +
	 w_j - \df{|y_j^0|^2w_j}{\nj^2 |\lm_0|^2} -   w_j|w_j|^2 \Bigg]\\
	 &=\df{1}{\det(1-Z_j^*Z_j)}\Big[ w_j + \df{q^0}{\lm_0} \bar{w_j} \Big].
	 \end{align*}
	 \endgroup
	 Thus,
	 \[
	  [\mathcal{K}_{Z_j}(|\lm_0|)\det (1-Z_j^*Z_j)]_{12} = (1-|\lm_0|^2)\Big( w_j + \df{q^0}{\lm_0} \bar{w}_j \Big).
	  \]
     Similarly we have
     \begingroup
     \allowdisplaybreaks
     \begin{align*}
     &[Z_j^*(1-Z_jZ_j^*)^{-1}]_{12} \\
     &= \df{1}{\det(1-Z_j^*Z_j)}\Bigg[\df{|y_j^0|^2\bar{w}_j}{\nj^2 |\lm_0|^2} + \df{\bar{y}_j^0\bar{y}_{n-j}^0  w_j}{\nj^2\bar{\lm}_0} +
     \bar{w}_j - \df{|y_j^0|^2\bar{w}_j}{\nj^2 |\lm_0|^2} -   \bar{w}_j|w|^2 \Bigg]\\
     &=\df{1}{\det(1-Z_j^*Z_j)}\Big[ \bar{w}_j + \df{\bar{q}^0}{\bar{\lm}_0}  w_j
     \Big]\,,
     \end{align*}
     \endgroup
     and hence
     \[
     [\mathcal{K}_{Z_j}(|\lm_0|)\det (1-Z_j^*Z_j)]_{21} = (1-|\lm_0|^2)\Big( \bar{w}_j + \df{\bar{q}^0}{\bar{\lm}_0}  w_j \Big).\]
     Clearly    
      \begin{align*}
      (Z_jZ_j^* - |\lm_0|^2) 
      = \begin{bmatrix}
      \df{|y_j^0|^2}{\nj^2|\lm_0|^2} + |w_j|^2 -|\lm_0|^2  &   \df{y_j^0 \bar{w}_j}{\nj \lm_0} + \df{\bar{y}_{n-j}^0  w_j}{\nj}\\
      \\
      \df{\bar{y}_j^0 w_j}{\nj \bar{\lm}_0} +\df{y_{n-j}^0 \bar{w}_j}{\nj}  & |w_j|^2 +\df{|y_{n-j}^0|^2}{\nj^2} -|\lm_0|^2
      \end{bmatrix} .
      \end{align*}
      Simple calculations give us
     \begin{align}\label{app-5}
     \nonumber &[(Z_jZ_j^* - |\lm_0|^2)(1-Z_jZ_j^*)^{-1}]_{22}\\
     &= \df{1}{\det(1-Z_j^*Z_j)}\Bigg[\df{|y_j^0|^2}{\nj^2} +\df{|y_{n-j}^0|^2}{\nj^2} - |\lm_0|^2 +\df{|y_j^0y_{n-j}^0 - \nj^2q^0|}{\nj^2}\Big( |\lm_0|
     +\df{1}{ |\lm_0|}\Big) -\df{|q^0|^2}{|\lm_0|^2} \Bigg],
     \end{align}
     and consequently
     \begin{align*}
     &[\mathcal{K}_{Z_j}(|\lm_0|)\det (1-Z_j^*Z_j)]_{22}\\
     &= -|\lm_0|^2 - \df{|y_j^0|^2}{\nj^2} - \df{|y_{n-j}^0|^2}{\nj^2} +\df{|q^0|^2}{|\lm_0|^2} - \df{|y_j^0y_{n-j}^0-\nj^2q^0|}{\nj^2}\Big(|\lm_0|  +\frac{1}{ |\lm_0|}\Big).
     \end{align*}
     Therefore,
     \begingroup
     \allowdisplaybreaks
     \begin{align*}
     & \mathcal{K}_{Z_j}(|\lm_0|)\det (1-Z_j^*Z_j) = \\
     & \begin{bmatrix}
     1 - \dfrac{|y_j^0|^2}{\nj^2} - \dfrac{|y_{n-j}^0|^2}{\nj^2} + |q^0|^2  & \q &
     (1 - |\lm_0|^2)\Big( w + \dfrac{q^0}{\lm_0}  \bar{w} \Big) \\
     - \dfrac{|y_j^0y_{n-j}^0 - \nj^2q^0|}{\nj^2}
     \Big( |\lm_0| + \dfrac{1}{|\lm_0|} \Big) & \q & \q \\
     \\
     \q & \q &
     - |\lm_0|^2 + \dfrac{|y_j^0|^2}{\nj^2} + \dfrac{|y_{n-j}^0|^2}{\nj^2} - \dfrac{|q^0|^2}{|\lm_0|^2} \\
     (1 - |\lm_0|^2)\Big( \bar{w} + \dfrac{\bar{q}^0}{\bar{\lm}_0}  w \Big) & \q  & - \dfrac{|y_j^0y_{n-j}^0 - \nj^2q^0|}{\nj^2} \Big(  |\lm_0| + \dfrac{1}{ |\lm_0|} \Big)\\
     \end{bmatrix}.
     \end{align*}
     \endgroup
     Finally, by elementary calculation we have
     \begin{align*}
     \nonumber  -(k-k_j)(k-k_{n-j}) & = -\Bigg[2\frac{|y_j^0y_{n-j}^0 - \nj^2q^0|}{\nj^2} - |\lm_0| + \frac{|y_j^0|^2|\lm_0|}{\nj^2} + \frac{|y_{n-j}^0|^2}{\nj^2|\lm_0|} - \frac{|q^0|^2}{|\lm_0|} \Bigg] \times \\
     \nonumber & \qq  \Bigg[2\frac{|y_j^0y_{n-j}^0 - \nj^2q^0|}{\nj^2} - |\lm_0| + \frac{|y_j^0|^2}{\nj^2|\lm_0|} + \frac{|y_{n-j}^0|^2|\lm_0|}{\nj^2} - \frac{|q^0|^2}{|\lm_0|} \Bigg]\\
     & \qq =\det\left(\mathcal{K}_{Z_j}(|\lm_0|)\det(1-Z_j^*Z_j) \right),
     \end{align*}
     and the proof is complete.
     
     \qed

\vspace{1cm}

\section{Data Availability Statement}

\noindent (1) Data sharing is not applicable to this article as no datasets were generated or analysed during the current study.\\

\noindent (2) In case any datasets are generated during and/or analysed during the current study, they must be available from the corresponding author on reasonable request.

\vspace{1cm}

\end{document}